\newtheorem*{acknowledgements*}{Acknowledgements}
\renewcommand{\Im}{\operatorname{Im}}
\renewcommand{\Re}{\operatorname{Re}}
\newtheorem{theorem}{Theorem}[section]
\newtheorem{lemma}[theorem]{Lemma}
\newtheorem{corollary}[theorem]{Corollary}
\newtheorem{proposition}[theorem]{Proposition}
\newtheorem*{theorem*}{Theorem}
\theoremstyle{remark}\newtheorem{remark}{Remark}
\newtheorem{case}{Case}
\numberwithin{equation}{section}
\renewcommand{\pmod}[1]{\left(\mathrm{mod}\,#1\right)}
\renewcommand{\phi}{\varphi}
\def\Var{\operatorname{Var}}
\begin{document}

\title[Variance of a general class of multiplicative functions]{The variance of a general class of multiplicative functions in short intervals}

\author[P. Darbar]{Pranendu Darbar}
\address[Pranendu Darbar]{School of Mathematics and Statistics \\ University of New South
	Wales, Sydney NSW 2052, Australia}
\email{darbarpranendu100@gmail.com}

\author[M. K. Das]{Mithun Kumar Das}
\address[Mithun Kumar Das]{School of Mathematical Sciences, National Institute of Science Education and Research \\ A CI of Homi Bhabha National Institute, Jatni, Khurda, 752050, India}
\email{das.mithun3@gmail.com}

\subjclass[2000]{11N37, 11K65, 42B10, 11M06}
\keywords{Multiplicative functions in short intervals, Fourier transform, Integer solutions of binary forms}

\begin{abstract}
We study a general class of multiplicative functions by establishing a connection between their ``short averages" and ``long average". More precisely, we employ Fourier analysis and the counting of rational points on specific binary forms to provide asymptotic estimates for the variance of this class within short intervals.
   Our results apply to notable multiplicative functions such as $\mu_k(n)$, $\frac{\phi(n)}{n}$, $\sigma_{\alpha}(n)$, among others, yielding several new results and improvements in the realm of short interval analysis. Remarkably, our results disprove a conjecture of van Overbeeke concerning the variance of $\frac{\phi(n)}{n}$.
\end{abstract}

\maketitle

\section{Introduction}\label{section 1}
Multiplicative functions, which satisfy the condition $f(mn) = f(m)f(n)$ for all coprime $(m, n)$, are among the most fascinating arithmetical functions for number theorists. In analytic number theory, a key focus revolves around the asymptotic evaluation of the global partial sum
$
\sum_{n \leq X} f(n)
$
for any multiplicative function $f$. While the global behavior of multiplicative functions has been extensively studied, understanding these partial sums in short intervals presents a formidable challenge. 
It becomes imperative to examine the behavior of multiplicative functions within short intervals and compare them to their global partial sums. To be more precise, let $x$ be a randomly selected number from the interval $[0, X]$, and let $H:=H(X)$ be a function of $X$. The objective is to understand the asymptotic formula for the sum $
\sum_{x\leq n\leq x+H}f(n)$
while imposing suitable growth conditions on the interval of length $H$. Instead of establishing results for each individual short interval, which often proves to be challenging, an alternative approach involves characterizing the fluctuations in these sums through the analysis of their variances
\[
\Var_f(X; H):=\frac{1}{X}\int_{X}^{2X}\bigg|\sum_{x<n\leq x+H}f(n)- \frac{H}{X}\sum_{n\leq X}f(n)\bigg|^2 dx.
\]

Selberg \cite{SEL} initially examined the variance $\Var_{\Lambda}(X; H)$ with the aim of enumerating primes within short intervals, focusing on the von Mangoldt function $\Lambda$. Nonetheless, obtaining the asymptotic formula for $\Var_{\Lambda}(X; H)$ poses significant challenges due to its direct connection to the Riemann hypothesis (RH) and the pair correlation conjecture for the Riemann zeta function (refer to \cite{GM}). 
 Furthermore, in \cite{GC} and \cite{GM}, conjectures were formulated concerning $\Var_{\mu}(X; H)$ of the M\"{o}bius function and $\Var_{\Lambda}(X; H)$ respectively. Similarly, a comparable conjecture can be proposed for the $k$-fold divisor function $d_k(n)$ in \cite{KRRR}. These conjectures exhibit a close connection to the observation that the zeros of families of $L$-functions tend to exhibit a distribution resembling the eigenvalues of specific random matrices, as explored in the work of Katz and Sarnak \cite{KS}. Moreover, by examining the asymptotic behavior of higher moments, including $\Var_f(X; H)$ (representing a second moment), one can derive distributional results for the arithmetical function $f$.

In recent years, significant progress has been made in establishing function field analogues of these conjectures, following the pioneering work of Keating and Rudnick \cite{KR}. The primary approach involves expressing the variance for functions of coefficients in terms of suitable $L$-functions and applying equidistribution results to establish the transformation of these sums into matrix integrals through the $q$-limit (for more details, refer to \cite{KR, KR2, KRRR}). Notably, the theorems concerning $\mathbb{F}_q[x]$ have sparked new conjectures for a variety of intriguing arithmetical functions in the number field setting.

Lastly, a favorable estimate for the quantity $\sum_{x<n\leq x+H}f(n)$ can be derived by using a non-trivial upper bound on $\Var_f(X;
 H)$ along with the application of Chebyshev's inequality. Prior to presenting our main results, we provide an overview of the advancements made in this area during the past decade.

\subsubsection*{Work of Matom\"{a}ki, Radziwi{\l}{\l} and Tao} In a groundbreaking achievement, Matom\"{a}ki and Radziwi{\l}{\l} \cite{MR} established that for every multiplicative function $f:\mathbb{N} \to [-1, 1]$, the short-term average closely approximates its long-term average for almost all $x \leq X$, in the sense that
 \[
\Var_f(X; H)=o(H^2), \quad \text{ as }\, H = H(X) \to \infty.
\]
\noindent
This result has numerous implications, including in evaluating the average behavior of the Liouville function, estimating the count of smooth numbers within short intervals, and determining the number of sign changes of the Liouville function up to a certain threshold $X$. Additionally, Matom\"{a}ki, Radziwi{\l}{\l}, and Tao [\cite{MRT}, Theorem A.1] have generalized this result to encompass any non-pretentious multiplicative function $f:\mathbb{N}\to \mathbb{C}$ bounded by $1$.

\vspace{2mm}
For the M\"{o}bius function, it is expected that $\Var_{\mu}(X; H)\sim H/\zeta(2).$ Furthermore, it is believed that $\sqrt{\zeta(2)/H}\sum_{x<n\leq x+H}\mu(n)$ follows approximately a normal distribution. Ng \cite{NG} explored this conjecture by employing the generalized Riemann hypothesis and a strong version of Chowla's conjecture on correlations of the M\"{o}bius function. For $H\leq X^{1/4-\epsilon}$, he established the validity of the aforementioned variance, and assuming these conjectures, a Gaussian distribution was demonstrated for $H\leq X^{\epsilon}$. It is crucial to impose the constraint $H\leq X^{1-\epsilon}$, as exceeding this limit would yield non-Gaussian statistics. In the context of function fields, Keating and Rudnick [\cite{KR}, Theorem 1.2] have proven the analogous result in the limit of large finite fields ($q\to \infty$).
\subsubsection*{Work of Gorodetsky, Matom\"{a}ki, Radziwi{\l}{\l} and Rodgers}
Motivated by the work of Keating and Rudnick \cite{KR}, in a recent study by Gorodetsky et al. \cite{GMRR}, the behavior of square-free numbers in short intervals was investigated. They demonstrated that for any given $\epsilon\in (0, 1/100)$ and $2\leq H\leq X^{6/11-\epsilon}$, the following result holds:
\begin{align}\label{square-free case}
\Var_{\mu^2}(X; H)=cH^{\frac12}+O\left(H^{\frac12-\frac{\epsilon}{16}}\right),
\end{align}
where $c>0$ is a specific constant.
Keating and Rudnick \cite{KR} established the optimal validity of \eqref{square-free case} over $\mathbb{F}_q[x]$ as $q\to \infty$. They proposed that \eqref{square-free case} holds true for $H\leq X^{1-\epsilon}$. Also, by employing a method introduced by Hall \cite{Hall}, they were able to establish an analogous expression for \eqref{square-free case} over $\mathbb{F}_q[x]$ in the limit of large degrees ($n\to \infty$) with a narrower range for $H$.  Furthermore, Gorodetsky et al. \cite{GMR} showed that the counts of square-free integers up to $X$ in short intervals of size $H$ tend to
a Gaussian distribution as long as $H\to \infty$ and $H=X^{o(1)}$. This was achieved by computing an asymptotic formula for the higher moments of $\sum_{x<n \leq x+H}\mu^2(n)-{H}/{\zeta(2)}$.

Note that the number of square-free integers in almost all intervals exhibits a cancellation of $H^{3/4}$ relative to its mean, in contrast to the work of Matom\"{a}ki, Radziwi{\l}{\l}, and Tao \cite{MRT}, where the cancellation for the Liouville or M\"{o}bius function is $o(1)$ relative to its mean. However, they achieve a wider range of applicability for shorter intervals. This distinction arises from the inherent rigidity of sequences composed of square-free integers, while the sequence of the Liouville or M\"{o}bius function lacks such rigidity.

An important motivation for studying the asymptotics of variance, beyond just its upper bounds, lies in the quest to establish a probabilistic model for fundamental arithmetical functions such as the M\"{o}bius function, prime counting function (in conjunction with $\Lambda(n)$), divisor functions $d_k(n)$, and their interrelationships with the distribution of zeros of the corresponding $L$-functions.

 Inspired by the preceding works, this paper is driven by the aim of exploring the asymptotic formula for $\Var_f(X; H)$ within a general class of multiplicative functions outlined in Section 2. By doing so, we are able to derive the variance for intriguing multiplicative functions, including the indicator function of $k$-free integers, the normalized Euler totient function $\frac{\phi(n)}{n}$, and the generalized sum of divisor functions. These functions are analyzed comprehensively in the subsequent subsections. We conclude with a discussion of the variances of several lesser-known multiplicative functions in the appendix.
 \subsection{The variance of $k$-free integers}
 For an integer $k \geq 2$, we say an integer $n$ is $k$-free if it has no divisor $d > 1$ which
 is a perfect $k$-th power. It is well known that the $k$-free integers have density $\frac{1}{\zeta(k)}$ in $\mathbb{N}$.
 Let
 \[
 E_k(x) :=\sum_{n\leq x}\mu_k(n)-\frac{x}{\zeta(k)}.
 \]
The currently most favorable upper bound, credited to Walfisz \cite{Wal}, enjoys multiple refinements for various values of $k$, assuming the validity of the Riemann hypothesis (see \cite{BP, GP, Liu, Liu2}). It is widely conjectured that
 \begin{align}\label{conjecture on k-free integers}
 E_k(x)\ll x^{\frac{1}{2k}+ \epsilon}.
 \end{align}
 This leads to significant implications such as the verification of the Riemann hypothesis and the simplicity of zeros of the Riemann zeta function along the critical line. Recent research by Mossinghoff et al. \cite{Mos} has further advanced our understanding in this area. They have demonstrated that infinitely often, for $k=2, 3, 4,$ and $5$, the ratio $E_k(x)/x^{1/2k}$ satisfies $E_k(x)/x^{1/2k}<-3$ as well as $E_k(x)/x^{1/2k}>3$. In addition, they studied the ratio $E_k(x)/x^{1/2k}$ for sufficiently large values of $k$.
Drawing a parallel to the conjecture presented in \eqref{conjecture on k-free integers}, it is reasonable to propose that, for any given $\epsilon>0$, and within the range $x^{\epsilon} \leq H \leq x$, the following statement holds true:
 \begin{small}
 \begin{align}\label{conjecture on k free integers in short intervals}
 \sum_{x<n \leq x+H}\mu_k(n)=\frac{H}{\zeta(k)}+ O\left(H^{\frac{1}{2k}+\epsilon}\right).
 \end{align}
 \end{small}

In the scenario where $k=2$, Gorodetsky et al. \cite{GMRR} made significant advancements over Hall's work \cite{Hall} by employing Fourier analysis and integer point-counting techniques associated with binary quadratic forms. They successfully established \eqref{conjecture on k free integers in short intervals} on average with a more precise error term. Subsequently, Gorodetsky, Mangerel, and Rodgers \cite{GMR} computed moments for $k$-free integers and the more general class of $B$-free numbers in order to examine their Gaussian distribution. Notably, the second moment (\cite{GMR}, Propositions 1.7 and 1.9) yields an error term of magnitude $O(H^{\frac{1}{k}-\epsilon})$ under the condition that $H\leq X^{\frac{k(k-1)}{(k+1)(2k-1)}-\epsilon}$. By utilizing Theorem \ref{main theorem for simultaneouly k-free intergers}, we can derive an application that improves upon the aforementioned result presented in \cite{GMR}. This improvement is applicable when considering $k\geq 3$ and extends the range of $\epsilon$ compared to Theorem $1$ of \cite{GMRR} which specifically addressed the case of square-free numbers.

 \begin{corollary}{\label{k-free case}}
 	Let $X \geq 1$ be sufficiently large. Suppose $e(k, 0)$ and $g(k)$ are defined in  \eqref{10/5/14:18}. Let $\epsilon\in (0, \frac{3}{10})$ be given.  For $2\leq H\leq X^{e(k, 0)-\epsilon}$, we have
 	\begin{align*}
 	\Var_{\mu_k}(X; H) =  c_{ k}H^{\frac{1}{k}} + O\left(H^{\frac{1}{k}- \frac{\epsilon}{3(k+1)}}\right),
 	\end{align*}
 	where 
 	\begin{small}
 	\begin{equation*}
 	c_k=  -\frac{1}{2\pi^2}\chi{\Big(\frac{k+1}{k}\Big)}\zeta{\Big(2-\frac{1}{k}\Big)}\prod_{p}\bigg( 1-\frac{1}{p^2} - \frac{2(p-1)}{p^{k+1}}\bigg)
 	\end{equation*}
 	\end{small}
 and $\chi(s)$ is defined by \eqref{10/5/00:43}.	Assuming the Lindel\"{o}f hypothesis, the above estimate holds for a wider range $H\leq X^{g(k)-\epsilon}$.
 \end{corollary}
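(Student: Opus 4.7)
The plan is to deduce Corollary \ref{k-free case} from Theorem \ref{main theorem for simultaneouly k-free intergers} by a direct specialization of parameters, followed by an explicit identification of the leading constant.

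First I verify membership of $\mu_k$ in the class $\mathcal{F}_{\alpha,\beta,k}$. Writing $\mu_k(n) = \sum_{d^k \mid n} \mu(d)$, I set $h(d) = \mu(d)$, which lies in $\mathcal{M}_0^{\mu}$ with $g \equiv 1$ and $\alpha = 0$. Property $(A)$ is then satisfied trivially with $\beta = 1$ and $\mathcal{P}^{\text{fin}} = \emptyset$. The growth condition $\alpha + k > 3/2$ forces $k \geq 2$, which is consistent with the range of $k$ in Corollary \ref{k-free case}. Moreover, the admissible range of $\epsilon$ in Theorem \ref{main theorem for simultaneouly k-free intergers} becomes $0 < \epsilon < \frac{3(1-0)}{2(\lceil 1/12 \rceil + 4)} = \frac{3}{10}$, matching the hypothesis of the corollary exactly.

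Next I invoke Theorem \ref{main theorem for simultaneouly k-free intergers} with the above parameters. Since $\beta^2 - 1 = 0$, the polynomial $P_{\beta^2 - 1}(\log H)$ defined by \eqref{logarithm in the main term} degenerates to a constant, call it $P_0$. Consequently the variance takes the form
\begin{equation*}
\Var(\mathcal{F}_{0,1,k}) = c_{h,k}\, P_0 \, H^{\tfrac{1}{k}} + O\!\left(H^{\tfrac{1}{k} - \tfrac{\epsilon}{3(k+1)}}\right),
\end{equation*}
valid in the stated range of $H$, and in the wider range $H \leq X^{g(k)-\epsilon}$ under the Lindel\"of hypothesis (since Theorem \ref{main theorem for simultaneouly k-free intergers} provides this extension automatically).

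The substantive remaining task is to identify $c_{h,k}\, P_0$ with the explicit constant $c_k$ in the statement. Substituting $h(p) = -1$ and $\alpha = 0$ into the Euler-product definition \eqref{constant 2}, each local factor simplifies to $\bigl(1 - 1/p^2 - 2(p-1)/p^{k+1}\bigr)$ together with the normalising factor $(1 - 1/p^{k})^{-1}$ from $\sum_d \mu(d)^2/d^k$. The global analytic factor $\chi\!\left(\tfrac{k+1}{k}\right)\zeta\!\left(2-\tfrac{1}{k}\right)$ arises from evaluating the Dirichlet series $\sum_d \mu(d)/d^s = 1/\zeta(s)$ at $s = (k+1)/k$ via the functional equation $\zeta(s) = \chi(s)\zeta(1-s)$, which is precisely the mechanism by which the leading constant in the variance is produced in the underlying proof; the numerical prefactor $-1/(2\pi^2)$ collects the remaining constants from \eqref{constant 2} and the value of $P_0$ at $\beta^2 = 1$. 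The main obstacle here is bookkeeping, making sure the local factors at each prime combine correctly with the global $\chi$-factor and that no spurious factor of $\zeta(2)^{-1}$ or $2$ is lost or duplicated when collapsing $P_{\beta^2-1}$ to the single constant $P_0$; once this is carried out, the stated expression for $c_k$ follows, completing the proof.
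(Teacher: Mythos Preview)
Your overall strategy is exactly the paper's implicit proof: Corollary~\ref{k-free case} is nothing more than Theorem~\ref{main theorem for simultaneouly k-free intergers} specialized to $h=\mu$, $g\equiv 1$, $\alpha=0$, $\beta=1$, with $\widetilde{c}_{h,k}=\sum_d \mu(d)/d^k=1/\zeta(k)$ and $P_0\equiv 1$ (a monic polynomial of degree $0$). The $\epsilon$-range check and the error-term match are correct.

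However, your paragraph computing the constant is muddled and partially wrong as written. There is no ``normalising factor $(1-1/p^k)^{-1}$ from $\sum_d \mu(d)^2/d^k$'' in the picture: the constant $c_{h,k}$ is given directly by \eqref{constant 2}, and the $\chi\!\bigl(\tfrac{k+1}{k}\bigr)\zeta\!\bigl(2-\tfrac{1}{k}\bigr)$ factor is already sitting there explicitly --- it does not ``arise from evaluating $\sum_d \mu(d)/d^s=1/\zeta(s)$ via the functional equation''. What you actually need to do is plug $h(p)=\mu(p)=-1$, $\alpha=0$, $\beta^2=1$ into \eqref{B(s)} and evaluate at $s=(1-k)/k$, where $k+ks=1$; the local factor then reads
\[
\Bigl(1-\tfrac{2}{p^k}+\tfrac{1}{p}\Bigr)\Bigl(1-\tfrac{1}{p}\Bigr)
=1-\tfrac{1}{p^2}-\tfrac{2(p-1)}{p^{k+1}},
\]
which is the Euler product in $c_k$. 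Since $P_0=1$ identically, the leading constant is simply $c_{h,k}$ from \eqref{constant 2} with $(\beta^2-1)!=1$. Tidy this computation up and drop the spurious justifications; the rest of the deduction is fine.
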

 
 \begin{remark}
 	We have made significant improvements in expanding the length of short intervals for $k\geq 3$. For instance, $e(3,0)\approx 0.60537,\, e(4, 0)\approx 0.65797, \, \ldots , e(10^5, 0)\approx 0.77346$ in comparison with the previous values obtained from \cite[Proposition 1.7]{GMR} as $e(3,0)\approx 0.3, e(4, 0)\approx 0.34286,\ldots , e(10^5, 0)\approx 0.49999$. Under the Lindel\"{o}f hypothesis, we are able to further extend these exponents denoted by $g(k)$: $g(3)\approx 0.7182, g(4)\approx 0.7355, \ldots, g(10^5)\approx 0.77346$. For sufficiently large values of $k$, the exponents remain the same, rounded to some decimal places, regardless of the validity of the Lindel\"{o}f hypothesis. 
 \end{remark}
 \begin{remark}
 It is possible to extend the length of a short interval even further, allowing $H\leq X^{1-\epsilon}$, with a slightly weaker estimate given by $\Var_{\mu_k}(X; H)\ll H^{1/k +\epsilon}$ under RH. For a more comprehensive understanding, refer to Remark \ref{under RH}. 
 \end{remark}

\subsection{The variance of Euler $\phi$-function in short intervals}
Here, we examine the variance of the normalized Euler totient function over integers. van Overbeeke [\cite{over}, Theorem 6.1] obtained an asymptotic formula for the normalized Euler phi function over $\mathbb{F}_q[x]$ in the $q$-limit by computing an integral over the group of unitary matrices. Additionally, he predicted \cite[Conjecture 2.1]{over} that, in the case of integers, the discrete variance of $\frac{\phi(n)}{n}$ converges to a fixed constant $\frac{1}{6 \zeta(2)}-\frac{1}{6 \zeta(2)^2}=0.03\ldots$ within a short interval of size $H=\Theta(x^{\delta})$, where $0<\delta \leq 1$. However, we present a result concerning the variance of $\frac{\phi(n)}{n}$ within a wider short interval. This is an application of Theorem \ref{main theorem for lower growth rate}. 
 \begin{corollary}\label{main theorem of euler toitient}
 	Let $\epsilon>0$ be given. For $2\leq H\leq X^{1-\epsilon}$, we have 
 	\begin{equation*}
 	\Var_{\frac{\phi(n)}{n}}(X; H) = c_{\zeta}(H) +O(H^{-\epsilon}),
 	\end{equation*}  
 	where
 	\begin{small}
 	\[
 	c_{\zeta}(H)=\sum_{\substack{d=1}}^{\infty}\frac{\mu^2(d)}{d^2}\prod_{p\nmid d}\left(1 + \frac{2}{p^2}\right)\left( \left\{ \frac{H}{d} \right\} - \left\{\frac{H}{d} \right\} ^2\right).
 	\]
 \end{small}
 \end{corollary}
 
 \begin{remark}
We have observed that $c_{\zeta}(H)\leq 0.833$. By using the computer software ``Mathematica", we have created graphical representations (refer to Figure \ref{fluctuation of phi}) that depict the fluctuations of the constant $c_{\zeta}(H)$ in relation to varying values of $H$.

Of particular interest is the third graph, which reveals that for integer values $2\leq H\leq 10000$, the range of $c_{\zeta}(H)$ spans from $0.01$ to $0.20$. In contrast, the first two graphs relate to the scenario in which $H$ is treated as a real number. The second graph effectively illustrates the smooth transition of the constant between consecutive integers. Additionally, the third graph demonstrates the distinct separation of layers based on even and odd integers.
\end{remark}
\begin{remark}
Numerically, even integers are represented in the first and second layers (from the bottom to the top in the third picture), while odd integers occupy the third and fourth layers. Specifically, the second layer contains a higher density of even integers due to the distribution of sequences like $\{H=6n+2\}_{n\in \mathbb{N}}$ and $\{H=6n+4\}_{n\in \mathbb{N}}$. The remaining $\{H=6n\}_{n\in \mathbb{N}}$ is distributed along the first layer. Also, the fourth layer exhibits a greater density of odd integers due to the presence of $\{H=6n+1\}_{n\in \mathbb{N}}$ and $\{H=6n+5\}_{n\in \mathbb{N}}$, and the third layer is occupied by $\{H=6n+3\}_{n\in \mathbb{N}}$.
\end{remark}

A key insight is that as $H$ approaches infinity, the variance limit converges to a specific constant that is determined by $H$, in contrast to the absolute constant proposed by van Overbeeke. Notably, by considering a subsequence $\{H=p\}_{p\in \mathcal{P}}$, we establish a lower bound for the constant $c_{\zeta}(H)$ as $H$ tends to infinity. This discrepancy exposes an inaccuracy in van Overbeeke's conjecture.
\begin{theorem}\label{lower bound for euler phi}
Let $\{H=p\}_{p\in \mathcal{P}}$ be the sequence of prime numbers. Then 
	\[
	c_{\zeta}(H)> 0.1036\ldots+o(1), \quad \text{ as } H\to \infty.
	\]
\end{theorem}

\begin{figure}
	\includegraphics[scale=0.43]{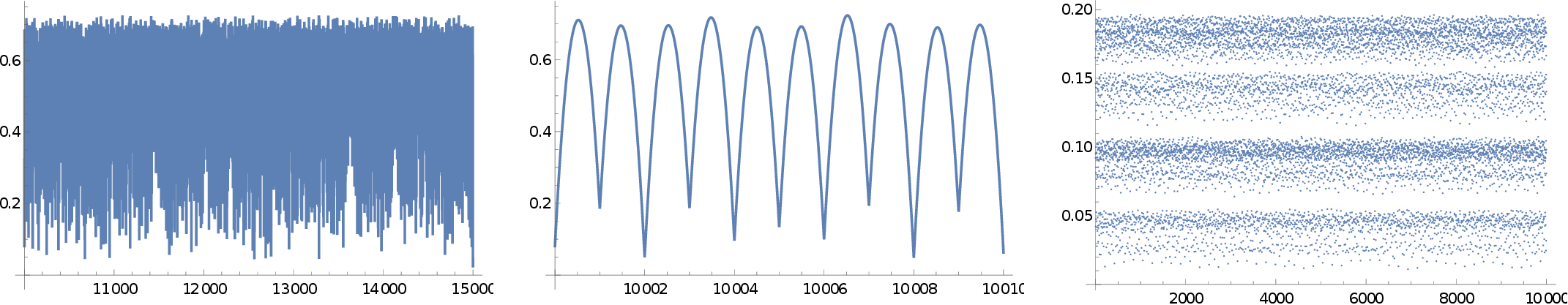}
	\caption{Fluctuation of $c_{\zeta}(H)$}\label{fluctuation of phi}
\end{figure} 

\subsection{The variance of generalized sum of divisor function}
Let $\alpha\in \mathbb{R}$. The $n$-th coefficient of the Dirichlet series $\zeta(s)\zeta(s - \alpha)$ represents the generalized sum of divisor function $\sigma_{\alpha}(n)$, which is defined by 
\[
\sigma_{\alpha}(n)=\sum_{d\mid n}d^{\alpha}.
\]
Chowla \cite{Chowla} initially studied the variance of $\sigma_{\alpha}(n)$ and established its behavior for the range $-1<\alpha<-\frac12$,
\[
\frac{1}{X}\int_{X}^{2X}\bigg|\sum_{n\leq x}\sigma_{\alpha}(n)-\zeta(1-\alpha)x\bigg|^2 dx=c_1+O\left(X^{1/2+\alpha} \log X \right),
\] 
for some explicit constant $c_1$ depending on $\alpha$. Afterward, Lau \cite {Lau} established an $\Omega_{\pm}$-result. In \cite[Theorem (eq. (3))]{KT}, Kiuchi and Tanigawa  obtained that for $-1<\alpha<-\frac12$ and $H\ll X^{1/2}$,
\begin{align}\label{result of kiuchi}
\Var_{\sigma_{\alpha}}(X; H) \ll X^{\epsilon}.
\end{align}
As a consequence of Theorem \ref{main theorem for lower growth rate}, our next corollary yields a significant improvement of Kiuchi and Tanigawa's aforementioned result across a broader range of $H$ and $\alpha$.
\begin{corollary}\label{divisor functions}
	Let $\epsilon>0$ be given. Then for $2\leq H\leq X^{\min\big\{\frac{29}{113+84\alpha}, \frac{-8\alpha - 1}{17+16\alpha}\big\}-\epsilon}$, whenever $\alpha\in (-1, -1/2)$ and $2\leq H\leq X^{1-\epsilon},$ when $\alpha\in (-2, -1]$, we have 
	\begin{equation*}
\Var_{\sigma_{\alpha}}(X; H) =
	c(H) +O(H^{-\epsilon}),
	\end{equation*}  
	where
	\begin{small}
	\[
	c(H)=\frac{\zeta(1-\alpha)^2}{\zeta(2-2\alpha)}\sum_{d=1}^{\infty}d^{2\alpha}\left( \left\{\frac{H}{d} \right\} - \left\{\frac{H}{d} \right\} ^2\right).
	\]
	\end{small}
	Assuming Lindel\"{o}f hypothesis, we have the extended range $H\leq X^{\frac{1}{3+2\alpha}-\epsilon}$ for $\alpha\in (-1, -1/2)$.
\end{corollary}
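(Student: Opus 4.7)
The plan is to deduce Corollary \ref{divisor functions} directly from Theorem \ref{main theorem for lower growth rate} after identifying $\sigma_\alpha$ inside the class $\mathcal{F}_{\widetilde{\alpha},1,1}$, where $\widetilde{\alpha}:=-\alpha$. Writing $\sigma_\alpha(n)=\sum_{d\mid n}d^{\alpha}$ puts $\sigma_\alpha$ in the form $\sum_{d^k\mid n}h(d)$ with $k=1$ and $h(d)=d^{\alpha}$; this $h$ is completely multiplicative, and $h(d)=g(d)/d^{\widetilde{\alpha}}$ with $g\equiv 1$, so $h\in\mathcal{G}_{\widetilde{\alpha}}$. Property $(A)$ holds with $\beta=1\in\mathbb{Z}\setminus\{0\}$ and $\mathcal{P}^{\mathrm{fin}}=\emptyset$. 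The standing assumption $\widetilde{\alpha}+k>3/2$ reduces to $\widetilde{\alpha}>1/2$, i.e.\ $\alpha<-1/2$, which is exactly the hypothesis of the corollary; moreover $\alpha\in(-2,-1/2)$ gives $\widetilde{\alpha}\in(1/2,2)$, so Theorem \ref{main theorem for lower growth rate} is applicable.

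Next I would translate the admissible ranges of $H$. For $\alpha\in(-1,-1/2)$ we have $\widetilde{\alpha}\in(1/2,1)$, and the $k=1$ clause of \eqref{hat e} yields
\begin{equation*}
\widehat{e}(1,\widetilde{\alpha})=\frac{29}{113-84\widetilde{\alpha}}=\frac{29}{113+84\alpha},\qquad \widehat{g}(1,\widetilde{\alpha})=\frac{1}{3-2\widetilde{\alpha}}=\frac{1}{3+2\alpha},
\end{equation*}
which reproduces the unconditional and Lindel\"of ranges of the corollary. For $\alpha\in(-2,-1]$ we have $\widetilde{\alpha}\in[1,2)$ and \eqref{hat e 1 to 2} with $k=1\neq 2$ gives $\widehat{e}=1$, matching $H\leq X^{1-\epsilon}$. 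In either regime Theorem \ref{main theorem for lower growth rate} gives the variance as $c_{h,1}(H)+O(H^{-\epsilon_{\alpha,1}})$ for some $\epsilon_{\alpha,1}>0$, and relabelling $\epsilon$ produces the claimed error term $O(H^{-\epsilon})$.

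It then remains to identify the two constants. The long average \eqref{constant in the short interval} is $\widetilde{c}_{h,1}=\sum_{d\geq 1}d^{\alpha-1}=\zeta(1-\alpha)$, matching the main term $\zeta(1-\alpha)H$. For the variance constant, the $h\in\mathcal{G}_{\widetilde{\alpha}}$ branch of Remark \ref{remark for lower growth rate} specialised to $q=1$ gives
\begin{equation*}
c_{h,1}(H)=\left(\prod_{p}\frac{p+p^{\alpha}}{p-p^{\alpha}}\right)\sum_{d=1}^{\infty}d^{2\alpha}\left(\left\{\tfrac{H}{d}\right\}-\left\{\tfrac{H}{d}\right\}^{2}\right).
\end{equation*}
Using the elementary identity $\tfrac{1+x}{1-x}=\tfrac{1-x^{2}}{(1-x)^{2}}$ with $x=p^{\alpha-1}$ collapses the Euler product to
\begin{equation*}
\prod_{p}\frac{1-p^{2\alpha-2}}{(1-p^{\alpha-1})^{2}}=\frac{\zeta(1-\alpha)^{2}}{\zeta(2-2\alpha)},
\end{equation*}
which converges absolutely because $\alpha<-1/2$ forces $1-\alpha>3/2$ and $2-2\alpha>3$. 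This reproduces exactly the constant $c(H)$ stated in the corollary.

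The only real obstacle is the bookkeeping of sign conventions: the corollary's $\alpha$ is the classical (negative) exponent in $\sigma_\alpha$, whereas the class parameter $\widetilde{\alpha}$ in $\mathcal{F}_{\widetilde{\alpha},\beta,k}$ is required to be positive, and one must track this consistently through the statements of \eqref{hat e}, \eqref{hat e 1 to 2} and of Remark \ref{remark for lower growth rate}. No further analytic input beyond Theorem \ref{main theorem for lower growth rate} is needed.
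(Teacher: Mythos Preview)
Your proposal is correct and follows exactly the paper's intended route: the paper simply declares the corollary to be ``an application of Theorem \ref{main theorem for lower growth rate}'', and you have carried out precisely that application by identifying $\sigma_\alpha\in\mathcal{F}_{-\alpha,1,1}$ with $h\in\mathcal{G}_{-\alpha}$, reading off the exponents from \eqref{hat e}--\eqref{hat e 1 to 2}, and simplifying the Euler product in $c_{h,1}(H)$ from \eqref{C_h,k}. The one cosmetic remark is that you could cite \eqref{C_h,k} directly rather than specialising Remark~\ref{remark for lower growth rate} to $q=1$, but the two routes coincide.
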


\begin{remark}
 First of all, Corollary \ref{divisor functions} presents an asymptotic formula featuring a constant in its main term, thereby enhancing the previously established bound in \eqref{result of kiuchi}. Additionally, we provide an extended range for the parameter $H$, specifically for $\alpha\in (-1, -0.66]$, effectively surpassing the $\frac{1}{2}$-barrier, although for $\alpha\in (-0.66, -1/2)$ we can still have asymptotic formula with weaker range of $H$ compared to Kiuchi and Tanigawa.  For example, when $\alpha=-\frac{3}{4}$, the exponent becomes $0.58$, and for $\alpha=-0.99$, it is approximately $0.97$, while for $\alpha=-0.55$, we get the exponent $\approx 0.41$. 
	
\end{remark}

\begin{remark}
	The Corollary \ref{divisor functions} extends to complex values of $\alpha$, where the length of short intervals remains unchanged with respect to $\Re(\alpha)$. In this case, the constant becomes
	\begin{small}
	\[
	c(H)=\frac{\zeta(1-\Re(\alpha))^2}{\zeta(2-2\Re(\alpha))}\sum_{d=1}^{\infty}d^{2\Re(\alpha)}\left( \left\{\frac{H}{d} \right\} - \left\{\frac{H}{d} \right\} ^2\right).
	\] 
	\end{small}
\end{remark}

\begin{remark}\label{remark 6}
When $\alpha\in (-1/2, 0)$, Kiuchi and Tanigawa \cite[Corollary, eq. (5)]{KT} established that for $X^{\epsilon}\ll H\ll X^{1/2}$,
	\begin{align*}
\Var_{\sigma_{\alpha}}(X; H) \asymp H^{1+2\alpha}.
	\end{align*}
	This result exhibits greater strength within the above range of $\alpha$ when compared to our result. Particularly for $\alpha=-\frac12$, we can say that 
	\[
	\Var_{\sigma_{\alpha}}(X; H) \leq \frac{\zeta(3/2)^2}{4 \zeta(3)}\log H+ O(1).
	\]
	Proving the expected variance to be $\Theta(\log H)$ appears to pose a considerable challenge.
\end{remark}

\section{Variance for general class of multiplicative functions}\label{section 0}
 
 \subsection{General class of multiplicative functions}
The main objective of this section is to introduce general classes of multiplicative functions and analyze their local behavior. In doing so, we also uncover the local behavior of several extensively studied multiplicative functions, which are primarily discussed in Section \ref{section 1} and further explored in the Appendix (Section \ref{appendix}). 
 
Let $\mathcal{M}$ represent the set of all multiplicative functions, and $\mathcal{G}$ denote the set of all completely multiplicative functions.  Let $k\geq 1$ be an integer, and let $(\mathbf{1} *_k h)(n)=\sum_{d^k|n}h(d)$ be a generalization of the Dirichlet convolution $\mathbf{1}*h$.

Suppose $\mathcal{P}^{\text{fin}}$ represents a finite set of primes and $\beta, \eta\in \mathbb{C}$. We say an arithmetical function $r$ satisfies property $(A_{\beta})$ if there exist $\beta, \eta$ and a finite set of primes $\mathcal{P}^{\text{fin}}$ such that   
\begin{align*}
r(p)= &\left\{
\begin{array}
[c]{ll}
\pm  \beta   & \, \text{ if }  p\not \in
\mathcal{P}^{\text{fin}}, 	\\
\eta & \, \text{ if } p\in  \mathcal{P}^{\text{fin}}.
\end{array}
\right.
\end{align*}
For a parameter $\alpha\geq 0$ and $\beta\in \mathbb{C}$, we define the classes
\[
\mathcal{M}_{\alpha, \beta}:=\Big\{\mu(d)\frac{r(d)}{d^\alpha}: \, r\in \mathcal{M} \text{ and }\text{satisfies } (A_{\beta}) \Big\}
\]
and
\[
\mathcal{G}_{\alpha, \beta}:=\Big\{\frac{r(d)}{d^\alpha}:
\, r\in \mathcal{G} \text{ and }\text{satisfies } (A_{\beta}) \text{ with $|\beta|\leq 1$}\Big\}.
\]
Observe that every function $r$ occurring in the above classes satisfies
$
r(n)\ll_\varepsilon n^\varepsilon
$ for every $\epsilon>0$.
The restriction $|\beta|\le 1$ in the definition of $\mathcal{G}_{\alpha,\beta}$ is imposed precisely to ensure this bound. One can include a more general class of completely multiplicative functions for which $|\beta|>1$ but it will weaken the exponents of the short intervals described in Section \ref{exponent of short intervals}. 

Here and throughout, we assume that for $k\geq 1$ and a real number $0\leq \alpha<2$, the condition $\alpha+ k>\frac32$ holds. Now let's consider the following class of functions:
\begin{equation*}\label{general calss}
\mathcal{F}_{\alpha, \beta, k}=\bigg\{\mathbf{1} *_k h: \, h\in \mathcal{M}_{\alpha, \beta}\cup \mathcal{G}_{\alpha, \beta} \bigg\}.
\end{equation*} 
For example, $\mu_k \in \mathcal{F}_{0, 1, k}$, $\frac{\phi(n)}{n} \in \mathcal{F}_{1, 1, 1}$, and $\sigma_\alpha \in \mathcal{F}_{\alpha, 1, 1}$. Note that our main corollaries in Section~\ref{section 1} involve the case $\beta = 1$; however, to provide a more diverse family corresponding to $\mathcal{M}_{\alpha, \beta}$, we also include examples in the appendix with $\beta > 1$, for instance, the Schemmel's totient  function $S_m \in \mathcal{F}_{1, m, 1}$ for a fixed integer $m \geq 2$ (see subsection \ref{sch totient func}). 
 
\begin{remark}


The class $\mathcal{F}_{\alpha, \beta, k}$ can extends the family of the functions of the form $f=1*h$, and examples of this class can be found in the following subsection. To broaden the nature of $r(p)$, one can assign different values on a finite set of primes. Additionally, the above class can be further generalized by considering $r(p)=\pm \beta e^{i a(p)}$, where $p\not \in \mathcal{P}^{\text{fin}}$ and $a(n)$ is an additive function. We confine our attention to $\alpha<2$ since for $\alpha\geq 2$, the behavior of corresponding arithmetic functions becomes straightforward and is therefore omitted.
\end{remark}

\subsection{Mean values for class $\mathcal{F}_{\alpha, \beta, k}$} Given a multiplicative function $f$, it is desired to obtain an asymptotic formula for the sum $\sum_{n\leq x}f(n)$. Ideally, we aim to provide a formula accompanied by an explicit error term that depends on the values of $f(p)$ for primes $p$. 
  Let $x\geq 1$ and $f\in \mathcal{F}_{\alpha, \beta, k}$.
It is easy to see that for any $\epsilon>0$,
\begin{align}\label{mean value estimate}
\sum_{n\leq x}f(n)=\widetilde{c}_{h, k}x+O\left(x^{\max \big\{\frac{1-\alpha}{k}+\epsilon,\, 0\big\}}\right),
\end{align}
where
\begin{align}\label{constant in the short interval}
\widetilde{c}_{h, k}=\sum_{d\geq 1}\frac{h(d)}{d^k}.
\end{align}
Improvements to the error term for various arithmetical functions within the class $\mathcal{F}_{\alpha, \beta, k}$ have been achieved in multiple works, including \cite{Apostol}, \cite{JK}, \cite{JKW}, \cite{Liu3}, \cite{Moree}, \cite{Sitaram} and \cite{Wal}.
 For the class $f\in \mathcal{F}_{0, \beta, 2}$, Varbanec \cite{VAR} obtained that for any $\epsilon>0$,
\[
\sum_{x<n\leq x+H}f(n)=\widetilde{c}_{h, 2}H+O(H^{1/2}x^{\epsilon})+O(x^{\theta+\epsilon}),
\]
uniformly in $H < x$ and $\theta = 0.2204.$ 
In this article, we establish an error term within the range of $O(H^{\frac{1-2\alpha}{2k}+\epsilon})$ with $\alpha\in [0, 1/2)$ and $O(1)$ with $\alpha\in (1/2, 2)$ for the family $\mathcal{F}_{\alpha, \beta, k}$ on average over $x\in [X, 2X]$. 
\subsubsection*{Exponents of short intervals}\label{exponent of short intervals} In order to state our results precisely, we present the exponents of short intervals based on $\alpha$ and $k$. When $k\geq 3$, we define a function $\nu:=\nu(k, \alpha)$ as follows. 
\begin{equation}\label{10/5/14:24}
\nu=\begin{cases}
\frac{-(2k-5/2+5\alpha)+\sqrt{(2k-5/2+5\alpha)^2+4k(k-2)(k+2\alpha-1)(2k-5/2+5\alpha)}}{2(k-2)(k+2\alpha-1)} & \mbox{ if } \alpha\in [0, 1/2),\\
\frac{-2\alpha+2\sqrt{\alpha^2+\alpha k(k-2)}}{k-2} & \mbox{ if } \, \alpha\in (1/2, 1).
\end{cases}
\end{equation}
Let $\alpha\in [0, 1/2)$. The exponents $e$ and $g$ are determined based on $\nu$ in the following manner:
\begin{equation}\label{10/5/14:18}
e(k, \alpha)=\begin{cases}
\frac{2(3+8\alpha)}{11(1+2\alpha)} & \mbox{ if } k=2, \vspace{2mm}\\
\frac{2(k-\nu)(k+2\alpha-5/4)}{((4-\nu)k-2\nu)(k+2\alpha-1)} & \mbox{ if } k \geq 3,
\end{cases}
\quad \mbox{and} \quad g(k)= \begin{cases}\frac23  & \mbox{ if }\, k = 2,\\
\frac{\sqrt{2k^2-4k+1}-1}{2\sqrt{2k^2-4k+1}-k} & \mbox{ if } \, k\geq 3.
\end{cases}
\end{equation}
For $\alpha\in (1/2, 1)$, the exponents are defined by
\begin{align}\label{hat e}
\widehat{e}(k, \alpha)=\begin{cases}
\min\big\{\frac{29}{113-84\alpha}, \frac{8\alpha - 1}{17-16\alpha}\big\} & \mbox{ if } \, k=1,\\
\min\big\{\frac{29}{71-42 \alpha}, \frac{2+\alpha}{4}\big\} & \mbox{ if }\, k=2,\\
\min\big\{\frac{29k}{29k+84(1-\alpha)}, \frac{\nu (k+2\alpha-5/4)}{2k(\nu- \alpha)}\big\} &\mbox{ if } \, k\geq 3,
\end{cases}
\quad \mbox{ and }\quad \widehat{g}(k, \alpha)=\begin{cases}\frac{1}{3-2\alpha} & \mbox{ if } \, k=1,\\
\frac{2+\alpha}{4} & \mbox{ if }\, k=2,\\
\frac{\nu}{2(\nu- \alpha)} &\mbox{ if } \, k\geq 3.
\end{cases}
\end{align}
Also for $\alpha\in [1, 2)$,
\begin{align}\label{hat e 1 to 2}
\widehat{e}(k, \alpha)=\widehat{g}(k, \alpha)=\begin{cases}
1 & \mbox{ if } \, k\neq 2,\\
\frac{2+\alpha}{4} & \mbox{ if }\, k=2.
\end{cases}
\end{align}
The intersection between the curves, which are determined by counting points on a binary form (refer to Section \ref{section 5}), yields these exponents. We illustrate these exponents in Figures \ref{figure 1} and \ref{figure 2} for several fixed values of $\alpha$, while $k$ varies from 1 to 100 (see the Appendix).
\subsection{Main results on variances of multiplicative functions}
By utilizing the parameters introduced in Section \ref{section 2}, we are now able to present our main results. Our objective is to derive an asymptotic formula for the variance, as expressed in the following form:
\[
\Var_f(X; H):=\frac{1}{X}\int_{X}^{2X}\bigg|\sum_{x<n\leq x+H}f(n)- \frac{H}{X}\sum_{n\leq X}f(n)\bigg|^2 dx.
\] 
For $f\in \mathcal{F}_{\alpha, \beta, k}$, using Cauchy--Schwarz inequality, $H\leq X^{1-\epsilon}$, and the estimate \eqref{mean value estimate}, it suffices to evaluate the following variance with a negligible error (comes from the error term of the global mean):
\begin{align*}\label{varinace}
\Var_f(X; H):= \frac{1}{X}\int_X^{2X}\bigg|\sum_{x<n\leq x+H}f(n)- \widetilde{c}_{h, k}H\bigg|^2 dx,
\end{align*}
where $X\geq 1$ is sufficiently large, and $\widetilde{c}_{h, k}$ is defined by \eqref{constant in the short interval}.
 
The first two theorems provide estimations of $\Var_f(X; H)$ for functions $f\in \mathcal{F}_{\alpha, \beta, k}$, where $\alpha \in [0, 1/2)$, whereas the third theorem is applicable when $\alpha\in (1/2, 2)$. 

\begin{theorem}\label{main theorem for simultaneouly k-free intergers}
	Let \(f\in \mathcal{F}_{\alpha,\beta,k}\), where
	\(0\leq \alpha<1/2\), \(\beta\in\mathbb{Z}\setminus\{0\}\), and \(k\geq 2\).
	Recall the quantities \(e(k,\alpha)\) and \(g(k)\) defined in
	\eqref{10/5/14:18}. Let \(\epsilon>0\) be sufficiently small and suppose that
	$
	2\leq H\leq X^{e(k,\alpha)-\epsilon}.
	$
	Then
	\[
	\Var_f(X;H)
	=
	c_{h,k}\,
	H^{\frac{1-2\alpha}{k}}
	P_{\beta^2-1}(\log H)
	+
	O\Bigl(
	H^{\frac{1-2\alpha}{k}-\delta_{k,\epsilon}}
	\Bigr),
	\]
	where \(c_{h,k}\) and \(P_{\beta^2-1}\) are defined in
	\eqref{constant 2} and \eqref{logarithm in the main term}, respectively.
	The constant \(\delta_{k,\epsilon}>0\), which depends on the choice of \(\epsilon\) and $k$, is described in the remark below.
	
	Under the Lindel\"of hypothesis, the above asymptotic formula remains valid in the wider range
	$2\leq H\leq X^{g(k)-\epsilon}.$
	
\end{theorem}

\begin{remark}
	Define
	\[
	\epsilon_{\alpha,\beta,k}
	:=
	\min\left\{
	\frac{3(1-2\alpha)}
	{2(\lceil \beta^2/12\rceil+4)},
	\,
	\frac{9(1-2\alpha)}
	{3+k\beta^2}
	\right\}.
	\]
	Then one may choose 
$
	\epsilon<\epsilon_{\alpha,\beta,k},
$
in the Theorem \ref{main theorem for simultaneouly k-free intergers} so that
	\[
	\delta_{2,\epsilon}
	=
	\frac{\epsilon}{8},
	\qquad
	\delta_{k,\epsilon}
	=
	\frac{\epsilon}{3(k+1)},
	\quad k\geq 3.
	\]
\end{remark}
Note that Theorem  \ref{main theorem for simultaneouly k-free intergers} holds true for any non-zero integer $\beta$. We will now extend the theorem to include any complex $\beta$, accompanied by a weaker error term, as described below.
\begin{theorem}\label{main theorem complex case}
Let $f\in \mathcal{F}_{\alpha, \beta, k}$, where $0\leq \alpha< 1/2, \beta\in \mathbb{C}\setminus \mathbb{Z}$, and $k\geq 2$. Suppose that $e(k, \alpha)$ and  $g(k)$ are defined by \eqref{10/5/14:18}. For $2\leq H\leq X^{e(k, \alpha)-\epsilon}$ and $N\geq  \lceil |\beta|^2\rceil$, we have
\[
\Var_f(X; H)=2H^{\frac{1-2\alpha}{k}}(\log H)^{|\beta|^2-1}\bigg\{\sum_{0\leq j\leq N}\frac{k^{j-|\beta|^2}\lambda_j(|\beta|^2)}{\log^j H}+O\left(\frac{1}{\log^{N+1} H}\right)\bigg\},
\]
where $\lambda_j(|\beta|^2)$ is defined by \eqref{definition of lambda}.
Assuming the Lindel\"{o}f hypothesis,
 the above estimate holds in the wider range  $H\leq X^{g(k)-\epsilon}$.
\end{theorem}

 \begin{remark}\label{remark 3}
One can consider the case $k=1$ in the above theorem. In this scenario, it is necessary to take into account the condition $\alpha>\frac{13}{84}$, which is derived by using Bourgain's bound to control the length of a specific Dirichlet polynomial (see \eqref{27/06/23}). Although the range of $H$ does not surpass the $\frac12$-barrier, and thus we omit this specific case.  
	\end{remark}

\begin{theorem}\label{main theorem for lower growth rate}
 Let $\frac{1}{2}<\alpha <2$, $\beta\in \mathbb{Z}\setminus \{0\}$ and $k\geq 1$.
  Let $\epsilon>0$ be given and $2\leq H\leq X^{\widehat{e}(k, \alpha)-\epsilon}$, where $\widehat{e}(k, \alpha)$ be defined by \eqref{hat e}-\eqref{hat e 1 to 2}. Then
\begin{align*}
\Var_f(X; H)=c_{h, k}(H)+O(H^{-\epsilon_{\alpha, k}}),
\end{align*}
where $c_{h, k}(H)$ is as in \eqref{C_h,k} and  $\epsilon_{\alpha, k}>0$ depends on $\epsilon, \alpha$ and $k$.

Assuming the Lindel\"{o}f hypothesis, the above estimate holds in a wider range  $2 \leq  H\leq X^{\widehat{g}(k, \alpha)-\epsilon}$, where $\widehat{g}(k, \alpha)$ is defined by \eqref{hat e}. 
 	
\end{theorem}

\begin{remark}\label{remark 10}
For the case of $\alpha=\frac12$, we are unable to derive the asymptotic formula for the variance directly. However, by following the proof of Theorem \ref{main theorem for lower growth rate}, we can establish
 \[
\Var_f(X; H)\ll (\log H)^{\beta^2},
\]
where the implied constant can be explicitly determined from the corresponding family (likewise the Remark \ref{remark 6}). Nevertheless, we encountered a technical obstacle in employing the Fourier analytic method to approach Theorem \ref{main theorem for simultaneouly k-free intergers}. This obstacle arises from the fact that certain equations, such as \eqref{neq2} and \eqref{bound of hatw}, do not hold when $\alpha=\frac12$.  
\end{remark}

\begin{remark}\label{under RH}
	While the Lindel\"{o}f hypothesis falls short of achieving our desired range $H\leq X^{1-\epsilon}$, the Riemann hypothesis furnishes us with a weaker estimate. Under the Riemann hypothesis, we can show that for $H\leq X^{1-\epsilon}$, the following holds for the restriction on $\beta$:
	\begin{small}
	\begin{align*}
	\Var_f(X; H)\ll \begin{cases}
	H^{\frac{1-2\alpha}{k}+\epsilon} & \mbox{ if } 0\leq \alpha< \frac12,\\
	H^{\epsilon} &  \mbox{ if } \frac12< \alpha< 1.	
	\end{cases}
	\end{align*}
	\end{small}
 In particular, we need to consider $\beta>0$ whenever $h\in \mathcal{M}_{\alpha, \beta}$ and $\beta<0$ if $h\in \mathcal{G}_{\alpha, \beta}$. To achieve this, we can modify the proof of Proposition \ref{19.04.22.6:26} based on the chosen value of $\alpha$. By doing so, we can effectively control the length of the Dirichlet polynomial while imposing the above restrictions on $\beta$ under the Riemann hypothesis. Moreover, in this scenario, we can only establish an upper bound for the variance, with an additional $\epsilon$ factor relative to its original asymptotic behavior.
	\end{remark}

\subsection{Essence of the paper} 
In Section 4, we consider a quantity $\Var_f^{\mathcal{D}}(X; H)$, which is a ``discrete variance". We refer to the parameters $\alpha$ and $k$ as the ``growth" and ``order", respectively, for a given arithmetic function $f$. These parameters play a crucial role in computing the asymptotic formulas for both $\Var_f(X; H)$ and $\Var_f^{\mathcal{D}}(X; H)$.
Conceptually, treating the discrete and continuous variances separately serves the purpose: while the former detects small values of $H$ without surpassing the half barrier, the latter overcomes this limitation in Fourier space. However, the continuous variance fails to detect small values of $H$, particularly when $H\ll X^{\epsilon}$.

The computation of $\Var_f^{\mathcal{D}}(X; H)$ involves standard techniques such as the Chinese remainder theorem and complex analytical method, while $\Var_f(X; H)$ involves a Fourier analytical method. Specifically, we follow the approach of Hall \cite{Hall} for $\Var_f^{\mathcal{D}}(X; H)$ and the method proposed by Gorodetsky et al. \cite{GMRR} for $\Var_f(X; H)$.

 The main difficulty arises from the additional influence of the growth and order factors. Specifically, Gorodetsky et al. \cite{GMRR} focused on estimating the ``second order" ($k=2$) and ``zero growth" ($\alpha =0$) scenarios for square-free integers. Additionally, for higher orders ($k\geq 3$), we employ a method based on counting rational points on specific binary forms, as described in Section 6, instead of using Dirichlet approximation as in \cite[subsection 3.4]{GMRR}. Generalizing the Dirichlet approximation method for higher order appears to be quite challenging. Notably, for $k=1$, we utilize point counting on a determinant surface.

Furthermore, the continuous variance system is calculated through two distinct approaches: one involves partitioning the convolution $f(n)=\sum_{d^k\mid n}h(d)$ into smaller divisors, which yields the main term for the asymptotic analysis using Fourier analytical techniques and counting rational points in binary form; the other approach focuses on computing the larger divisors by effectively managing the length of the corresponding Dirichlet polynomial, utilizing Huxley's large value theorem, Bourgain's sub-convexity bound (effective than the Weyl bound), and Montgomery's mean value theorem. We mainly follow the approach of \cite{GMRR} where the additional tools to count rational points on a binary form and Bourgain's sub-convexity bound has been applied to handle higher order arithmetic functions described as above.

It is noted that the variance, denoted as $\Var_f(X; H)$, undergoes a shift in order of magnitude, specifically $H^{\frac{1-2\alpha}{2k}}$, as the parameter $\alpha$ changes. When $\alpha=\frac12$, which is the critical point, no asymptotic formula has been derived yet. However, we do have the inequality $\Var_f(X; H)\ll (\log H)^{|\beta|^2}$. Subsequently, as $\alpha$ crosses the critical point and approaches $1$, the variance remains constant and only depends on the length of short intervals.

\subsection{Organization of the paper} This article is structured as follows: Section \ref{section 2} provides definitions of basic notations and parameters used in the main theorems. Section \ref{section 3} presents key propositions that support the proofs of the main theorems stated in Sections \ref{section 1} and \ref{section 0}. All main theorems and corollaries are proven in Section \ref{section 4}, building upon the propositions of Section \ref{section 3}. Section \ref{section 5} establishes the validity of Propositions \ref{below z-barrier} and \ref{proposition for lower growth family}, which address small divisors. In Section \ref{section 6}, we prove Proposition \ref{19.04.22.6:26} concerning large divisors. Finally, Section \ref{section 7} concludes by presenting the proof of Propositions ~\ref{prop5} and ~\ref{prop6}. Furthermore, we formulate several other interesting applications and figures related to exponents of our main results in Section \ref{appendix}.

\section*{Acknowledgments} The authors express their sincere gratitude to Brad Rodgers and Ofir Gorodetsky for their diligent review of the initial version of this manuscript, as well as for their valuable comments and insightful suggestions. This research was initiated during the first author's NBHM Fellowship at ISI Kolkata, India, which was funded by DAE, while the second author was a visiting scientist at that time. The work was completed during the first author's ERCIM ``Alain Bensoussan" Fellowship at NTNU, Trondheim, Norway, and the second author's post-doctoral fellowship at the Institute of Mathematical Sciences, India. Additionally, Darbar acknowledges partial support from the Research Council of Norway grant 275113, and Das thanks DST - Government of India for the support under the
DST-INSPIRE Faculty Scheme with Faculty Reg. No. IFA21--MA 168. 

\section{Parameters used in main theorems}\label{section 2}
 
In this paper, we use the following notations. For a non-negative function $g(n)$, we write $f(n)=O(g(n))$ (or $\ll g(n)$) if $|f(n)|\leq C g(n)$ holds for all $n>N$, where $C>0$ is a constant.

We say that $f(n)=o(g(n))$ if the ratio $\frac{f(n)}{g(n)}\to 0$ as $n\to \infty$. Moreover, we write $f(n)\asymp g(n)$ or $f(n)=\Theta(g(n))$ if there exist positive constants $C_1$ and $C_2$ such that $C_1 g(n)\leq |f(n)|\leq C_2 g(n)$. Additionally, we write $n\sim N$ to denote $N\leq n\leq 2N$.

\noindent Furthermore, we introduce several functions that play a crucial role in stating our results.

The function $\chi(s)$ is defined on $\mathbb{C}\setminus \{1\}$, and it is given by the expression 
\begin{equation}\label{10/5/00:43}
\chi(s)= 2^s\pi^{s-1}\Gamma(1-s)\sin(\pi s/2) .
\end{equation}
This function arises in the functional equation of the Riemann zeta function,  \[\zeta(s)=\chi(s)\zeta(1-s).\]

\subsection*{In the range $\alpha\in [0, 1/2)$}
Let us consider $\beta\in \mathbb{C}\setminus \{0\}$ and $k\geq 2$. For $s\in \mathbb{C}$, we introduce the following definitions, which are used to compute residues of certain line integrals:
\begin{small}
\begin{align}\label{B(s)}
B(s):=\prod_{p} \left( 1+\frac{2\Re (h(p))}{p^{k}} + \frac{|h(p)|^2}{p^{k+ ks}} \right)\left(1-\frac{1}{p^{k+ks+2\alpha}}\right)^{|\beta|^2}
\end{align}
\end{small}
and 
\begin{small}
\begin{align}\label{D(s)}
D(s):=\prod_{p}\left(1-\frac{|h(p)|^2}{p^{k+ks}}\right)^{-1}\left(1-\frac{1}{p^{k+ks+2\alpha}}\right)^{|\beta|^2}
\left(1-\frac{|h(p)|^2}{p^{2k}}\right)\left(1-\frac{2\Re (h(p))}{p^k}+\frac{|h(p)|^2}{p^{2k}}\right)^{-1}.
\end{align}
\end{small}
Recalling the function $\chi(s)$ from \eqref{10/5/00:43}, we define $c_{h, k}$ for $\beta\in \mathbb{Z}$ as follows:
\begin{small}
\begin{align}\label{constant 2}
c_{h, k}=&\left\{
 	\begin{array}
 	[c]{ll}
 	 -\frac{1}{2\pi^2(\beta^2-1)!}B\left(\frac{1-2\alpha-k}{k}\right)\chi(\frac{k+1-2\alpha}{k})\zeta\left(2-\frac{1-2\alpha}{k}\right) \,& \text{ if   } h\in \mathcal{M}_{\alpha, \beta}, 	\\
-\frac{1}{2\pi^2(\beta^2-1)!}D\left(\frac{1-2\alpha-k}{k}\right)\chi(\frac{k+1-2\alpha}{k})\zeta\left(2-\frac{1-2\alpha}{k}\right) & \, \text {if } h\in \mathcal{G}_{\alpha, \beta}
.
 	\end{array}
 	\right.
\end{align} 
\end{small}
We express $P_{t}(x)$ as follows:
\begin{small}
\begin{align}\label{logarithm in the main term}
P_{t}(x)=&\left\{
 	\begin{array}
 	[c]{ll}
 	\sum_{\substack{r+l+m+n=t\\ r,l,m ,n\geq 0}}\Big(\frac{B^{(r)}}{B}\cdot \frac{G_{k}^{(l)}}{G_{k}}\cdot\frac{\chi^{(m)}}{\chi}\Big)\left(\frac{1-2\alpha-k}{2\pi ik}\right) x^{n} \,& \text{ if   } h\in \mathcal{M}_{\alpha, \beta},  	\\
\sum_{\substack{r+ l +m+ n=t\\ r, l, m, n \geq 0}}\Big(\frac{D^{(r)}}{D}\cdot \frac{G_{k}^{(l)}}{G_{k}}\cdot\frac{\chi^{(m)}}{\chi}\Big)\left(\frac{1-2\alpha-k}{2\pi ik}\right) x^{n} & \, \text {if } h\in \mathcal{G}_{\alpha, \beta}.
 	\end{array}
 	\right.
\end{align}
\end{small}
Here, $P_{t}(x)$ is a monic polynomial in $x$ of degree $t$, the $n$-th derivative of $f$ is denoted by $f^{(n)}$, and $(f\cdot g \cdot h)(y):=f(y)g(y)h(y)$. The function $G_k(s)$ is defined as
\begin{small}
\begin{align}\label{G(s)}
 G_k(s):=\zeta(1-s)\zeta^{\beta^2}(k+ks+2\alpha)\left(s-\frac{1-2\alpha-k}{k}\right)^{\beta^2}.
\end{align}
 \end{small}
Furthermore, for complex $\beta$, we introduce the function $\lambda_{j, \alpha}(|\beta|^2)$ given by
\begin{small}
\begin{align}\label{definition of lambda}
\lambda_{j, \alpha}(|\beta|^2):=\frac{1}{\Gamma(|\beta|^2-j)}\sum_{h+i=j}\frac{1}{i! h!}L_{\alpha}^{(h)}(1)\gamma_i(|\beta|^2),
\end{align}
\end{small}
where the coefficients $\gamma_i$ are obtained from the Taylor series expansion of
\begin{small} 
\begin{align}\label{definition of gamma}
\frac{1}{z}((z-1)\zeta(z))^{|\beta|^2}=\sum_{j\geq 0}\frac{1}{j!}\gamma_j(|\beta|^2)(z-1)^j,
\end{align}
\end{small}
and $L_{\alpha}(z)$ is defined by
\begin{small}
\begin{align*}
L_{\alpha}(z):=
 	\begin{cases}
 	 z\, \zeta\left(2-\frac{z-2\alpha}{k}\right)B\left(-1+\frac{z-2\alpha}{k}\right)\chi \big(\frac{-1+(z-2\alpha)/k}{2\pi i}\big) \,& \text{ if   } h\in \mathcal{M}_{\alpha, \beta},\\
 z\, \zeta\left(2-\frac{z-2\alpha}{k}\right)D\left(-1+\frac{z-2\alpha}{k}\right)\chi \big(\frac{-1+(z-2\alpha)/k}{2\pi i}\big) & \, \text {if } h\in \mathcal{G}_{\alpha, \beta}.
 	\end{cases}
 \end{align*}
 \end{small}
\subsection*{The range $\alpha\in (1/2, 2)$}
We introduce a constant, denoted by $c_{h,k}(H)$, which depends on the parameter $H$. It is defined as follows:
\begin{small}
\begin{align}\label{C_h,k}
c_{h,k}(H):= 
\begin{cases}
\sum_{d=1}^{\infty}h^2(d)\prod_{p\nmid d}\Big(1 + \frac{2h(p)}{p^k}\Big)\Big( \left\{ \frac{H}{d^k} \right\} - \left\{\frac{H}{d^k} \right\} ^2\Big) & \mbox{ if } h\in \mathcal{M}_{\alpha, \beta}, \\
\Big(\prod_p\frac{p^k+h(p)}{p^{k}-h(p)}\Big)\sum_{d=1}^{\infty}h^2(d)\Big( \left\{ \frac{H}{d^k} \right\} - \left\{\frac{H}{d^k} \right\} ^2\Big) & \mbox{ if } h\in \mathcal{G}_{\alpha, \beta}.
\end{cases}
\end{align}
\end{small}

\section{Preparation for proving Theorem \ref{main theorem for simultaneouly k-free intergers} and Theorem~\ref{main theorem for lower growth rate}}\label{section 3}

In order to establish these theorems, we rely on the main propositions presented in Sections \ref{section 5}, \ref{section 6}, and \ref{section 7}.
The first two propositions provide us with an asymptotic formula for the variance of a restricted sum up to a certain parameter $z$. Meanwhile, the third proposition provides an upper-bound estimate for the same quantity beyond $z$. For the purpose of our analysis, we denote these constrained variances as $\mathcal{J}_{h, z}(X; H)$ and $\mathcal{K}_{h, z}(X; H)$, respectively. In other words, for sufficiently large $X$,
\begin{small}
\begin{align}\label{J_k}
\mathcal{J}_{h, z}(X; H):= \frac{1}{X}\int_X^{2X} \Bigg|\sum_{\substack{x<nd^k\leq x+H\\ d^k\leq z}}h(d)-H\sum_{d^k\leq z}\frac{h(d)}{d^k}\Bigg|^2 dx,
\end{align}
\end{small}
\begin{small}
\begin{align}\label{K_k}
\mathcal{K}_{h, z}(X; H):= \frac{1}{X}\int_X^{2X} \Bigg|\sum_{\substack{x<nd^k\leq x+H\\ d^k> z}}h(d)-H\sum_{d^k> z}\frac{h(d)}{d^k}\Bigg|^2 dx.
\end{align}
\end{small}

\begin{proposition}\label{below z-barrier}
Let $f\in \mathcal{F}_{\alpha,\beta, k}$ with $\beta\in \mathbb{Z}\setminus \{0\}$ and $0\leq \alpha<\frac{1}{2}$. Suppose that $0<\epsilon <  \frac{1-2\alpha}{2(\lceil\beta^2/12\rceil+4)}$ is given.
\begin{enumerate}[label=(\alph*)]
	\item
 Assume $k=2$ and $X^{\epsilon}\leq H\leq X^{\frac{2+\alpha}{3+2\alpha}-\epsilon}$.
For a parameter $z$ with
\[
H^{1+\epsilon}\leq z \leq \min\Big\{X^{\frac{1}{1-\alpha}}H^{-\frac{1+2\alpha}{2-2\alpha}-\epsilon}, H^{\frac{1-2\alpha}{2-2\alpha}- \epsilon}X^{\frac{1}{2-2\alpha}}\Big\},
\]
we obtain
\begin{align*}
\mathcal{J}_{h,z}(X; H)= 
 	c_{h, 2}H^{\frac{1-2\alpha}{2}}P_{\beta^2-1}(\log H)
 +O\left(H^{\frac{1-2\alpha}{2}-\frac{\epsilon}{5}}\right). 	
\end{align*}
\item
 For $k\geq 3$ and $H^{1+\epsilon}\leq z\leq  \min \Big\{X^{\frac{2}{\nu(2-\alpha)}}H^{\frac{1-2\alpha}{\nu(2-\alpha)}-\epsilon}, X^{\frac{\nu}{2(\nu-\alpha)}}H^{\frac{1-2\alpha}{2(\nu-\alpha)}-\epsilon}, X^{\frac{2(k-\nu)}{(4-\nu)k-2\nu}-\epsilon}\Big\}$,
\begin{align*}
\mathcal{J}_{h, z}(X; H)=
 	c_{h, k}H^{\frac{1-2\alpha}{k}}P_{\beta^2-1}(\log H)
 + O\left(H^{\frac{1-2\alpha}{k}-\frac{\epsilon}{k}} \right),
\end{align*}
where $\nu, c_{h, k}$ and $P_{\beta^2-1}(\log H)$ are defined as in \eqref{10/5/14:24}, \eqref{constant 2} and \eqref{logarithm in the main term} respectively.
\end{enumerate}  
\end{proposition}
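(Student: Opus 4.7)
Writing $\#\{n:x<nd^k\le x+H\}-H/d^k = \{x/d^k\}-\{(x+H)/d^k\}$, I would reduce the integrand of $\mathcal{J}_{k,z}(H;h)$ to
$$\Delta(x)=\sum_{d^k\le z}h(d)\Bigl(\{x/d^k\}-\{(x+H)/d^k\}\Bigr).$$
Apply a Vaaler-type truncated Fourier expansion at height $M$ (to be chosen as a small power of $X$) to each fractional part, so that
$$\Delta(x)=\sum_{d^k\le z}h(d)\sum_{0<|m|\le M}\frac{\bigl(1-e(mH/d^k)\bigr)e(mx/d^k)}{2\pi i m}+E(x),$$
with a Vaaler remainder $E(x)$ that can be absorbed by the usual $\beta$-majorant argument provided $M$ is chosen suitably in terms of $H,z,X$.

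\textbf{Step 2 (Opening the square, diagonal vs.\ off-diagonal).} Expanding $|\Delta(x)|^2$ and integrating over $x\in[X,2X]$ produces
$$\frac{1}{X}\int_X^{2X}|\Delta(x)|^2\,dx=\sum_{d_1^k,d_2^k\le z}\frac{h(d_1)\overline{h(d_2)}}{(2\pi)^2}\sum_{\substack{0<|m_i|\le M}}\frac{w_1\overline{w_2}}{m_1m_2}\,\mathcal{I}\!\left(\frac{m_1}{d_1^k}-\frac{m_2}{d_2^k}\right)+\text{errors},$$
where $w_i=1-e(m_iH/d_i^k)$ and $\mathcal{I}(\lambda)=X^{-1}\int_X^{2X}e(\lambda x)\,dx\ll\min(1,(|\lambda|X)^{-1})$. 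The $\lambda=0$ locus $m_1d_2^k=m_2d_1^k$ constitutes the \emph{diagonal}; everything else is \emph{off-diagonal}.

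\textbf{Step 3 (Diagonal main term).} On the diagonal, set $\delta=(d_1,d_2)$, $d_i=\delta e_i$ with $(e_1,e_2)=1$, forcing $m_i=e_i^k t$ for a nonzero integer $t$. Using multiplicativity of $h$ and property $(A)$, the diagonal sum factors as
$$\frac{1}{(2\pi)^2}\sum_{\delta^k\le z}\mathfrak{h}(\delta)\sum_{t\ne 0}\frac{|1-e(tH/\delta^k)|^2}{t^2}$$
times a coprime factor in the $e_i$'s, where $\mathfrak{h}$ has Dirichlet series representable as $B(s)\zeta(ks+2\alpha)^{\beta^2}$ (or $D(s)\cdot(\cdots)$ when $h\in\mathcal{G}_\alpha$), possessing a pole of order $\beta^2$ at $s_0=(1-2\alpha)/k$. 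Writing the $t$-sum via a Mellin transform, shifting the contour past $s_0$, and applying the functional equation for $\zeta$ (which produces the $\chi((k+1-2\alpha)/k)$ factor), residue calculus yields
$$c_{h,k}H^{(1-2\alpha)/k}P_{\beta^2-1}(\log H)+O\!\left(H^{(1-2\alpha)/k-\delta_{k,\epsilon}}\right),$$
with the $\log H$ polynomial coming from expanding the regular part of the Dirichlet series and $H^{s-s_0}$ around $s=s_0$.

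\textbf{Step 4 (Off-diagonal bound --- the main obstacle).} The hard part is controlling the off-diagonal terms. Using $|\mathcal{I}(\lambda)|\ll(|\lambda|X)^{-1}$ and the spacing $|m_1d_2^k-m_2d_1^k|\ge 1$, the off-diagonal reduces after grouping to weighted exponential sums of the shape $\sum_{d\asymp D} h(d)e(\xi d^k)$ and their dyadic mean squares, which after Fourier transform/Perron inversion are governed by moments of $\zeta(1/2+it)$ on short intervals. For $k=2$ one invokes Huxley--Watt bounds for the fourth power moment to produce the exponent $2(3+8\alpha)/(11(1+2\alpha))$; for $k\ge 3$ one interpolates Ivi\'c-type bounds and balances two competing error terms against $H^{(1-2\alpha)/k}$, obtaining a quadratic in $\nu$ whose positive root is exactly \eqref{10/5/14:24}. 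The three upper bounds on $z$ in each case of the proposition correspond precisely to the three regimes where the Vaaler truncation, the off-diagonal via $|\mathcal{I}(\lambda)|\ll 1/|\lambda|X$, and the $\zeta$-moment all yield error $O(H^{(1-2\alpha)/k-\delta_{k,\epsilon}})$. Under the Lindel\"of hypothesis one replaces each moment bound by $\zeta(1/2+it)\ll t^\epsilon$, collapsing the three constraints into a single one governed by $g(k)$. The delicate part is the simultaneous optimization of $M$ and $z$; tracking each error through the four steps is what forces the exponent $\delta_{k,\epsilon}$ to take its specific form.
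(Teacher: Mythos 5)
Your Steps 1--3 do track the paper's architecture: the reduction via the sawtooth function $\psi$, the smooth majorant/minorant $\sigma_\pm$ with compactly supported Fourier transform (Lemma \ref{newlem1}), and the evaluation of the diagonal $I_{S,z}$ by Mellin transform and contour shift past the pole of order $\beta^2$ at $s=(1-2\alpha-k)/k$, with the $\chi$-factor coming from the functional equation (Lemma \ref{lemma of main term}). These are substantively the right moves. But Step 4, which is where the quantitative content of the proposition actually lives, describes the wrong mechanism.

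You claim the off-diagonal term "reduces \ldots to weighted exponential sums $\sum_{d\asymp D}h(d)e(\xi d^k)$ \ldots governed by moments of $\zeta(1/2+it)$," invoking Huxley--Watt for $k=2$ and Ivi\'c-type bounds for $k\ge 3$. That is not how the paper treats the off-diagonal of $\mathcal{J}_{k,z}$. After fixing $n_1,n_2$ in the constraint $0<|n_1/d_1^k-n_2/d_2^k|\le BH^{\epsilon_2}/X$, the paper is left with a pure lattice-point count: how many $(d_1,d_2)\in\mathbb{Z}^2$ with $0<|F(d_1,d_2)|\le Z$ for the binary form $F(x,y)=n_1y^k-n_2x^k$ of degree $k$ and $Z=BH^{\epsilon_2}D_1^kD_2^k/X$. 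This is handled in Lemma \ref{E(H)} via Stewart--Xiao's refinement of Mahler's theorem (Lemma \ref{a result of Mahler}) for the bulk and Heath-Brown's theorem on large solutions (Lemma \ref{a result of Stewart and Xiao}) for the tail; the parameter $\nu$ arises from splitting $D_1D_2\lessgtr z^{\nu/k}$ and balancing the two binary-form bounds $Z^{2/k}$ and $Z^{\nu/k}$, and the $k=2$ case follows \cite{GMRR}'s continued-fraction/quadratic-irrational argument. There is no zeta-moment input and no Lindel\"of hypothesis anywhere in the proof of Proposition \ref{below z-barrier}. The $\zeta(1/2+it)$ moment machinery --- fourth moment, large-value estimates (Lemma \ref{large value theorem}), Bourgain's subconvexity, and the Lindel\"of-conditional improvement --- belongs entirely to the complementary piece $\mathcal{K}_{k,z}(H;h)$ in Proposition \ref{19.04.22.6:26}, which handles $d^k>z$. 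So you have essentially transplanted the tools of Proposition \ref{19.04.22.6:26} into the wrong place. The upper bounds on $z$ in the statement are precisely the thresholds under which the binary-form counts stay below $H^{(1-2\alpha)/k-\delta}$, and you cannot reproduce them without that machinery.

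A secondary point: the quadratic in $\nu$ whose root gives \eqref{10/5/14:24} does not come from "balancing two competing error terms against $H^{(1-2\alpha)/k}$" in this proposition alone; it arises when the constraint $z\ge H^{(k+2\alpha-1)/(k+2\alpha-5/4)+\epsilon}$ from Proposition \ref{19.04.22.6:26} (where Bourgain's exponent $29/42$ enters) is reconciled with the upper constraints on $z$ from Lemma \ref{E(H)}, in the proof of Theorem \ref{main theorem for simultaneouly k-free intergers}. Within Proposition \ref{below z-barrier} itself, $\nu\in(1,2]$ is just a free splitting parameter in the binary-form count.
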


\begin{proposition}\label{proposition for lower growth family}
Consider $f\in \mathcal{F}_{\alpha,\beta, k}$ with $\beta\in \mathbb{Z}\setminus \{0\}$ and $k\geq 1$. Let $\epsilon>0$ be given, and suppose there exists a positive $\epsilon_{\alpha, k}>0$. Assume that $\frac12<\alpha< 1$. Then
\[
\mathcal{J}_{h,z}(X; H)=
c_{h,k}(H)+ O\left(H^{-\epsilon_{\alpha, k}}\right),
\]
provided that
\begin{align*}
z\leq 
\begin{cases}
X^{\frac{1}{2(2-\alpha)}}H^{\frac{1}{2(2-\alpha)}-\epsilon} & \mbox{ if }  k=1,\\
\min\big\{X^{\frac{1}{1-\alpha}}H^{-\frac{1}{1-\alpha}-\epsilon}, X^{\frac{1}{2-2\alpha}}H^{-\epsilon}\big\} & \mbox{ if } H\leq X^{\frac{2+\alpha}{4}-\epsilon} \text{ and }  k=2,\\
\min \big\{X^{\frac{2}{\nu(2-\alpha)}-\epsilon}, X^{\frac{\nu}{2(\nu-\alpha)}-\epsilon}, X^{\frac{2(k-\nu)}{(4-\nu)k-2\nu}-\epsilon}\big\} & \mbox{ if } k\geq 3.
\end{cases}
\end{align*}
Suppose $1\leq \alpha< 2$. In this scenario, the above estimate holds whenever 
	\begin{align*}
	z\leq 
	\begin{cases}
	X^{\frac{1}{2(2-\alpha)}}H^{\frac{1}{2(2-\alpha)}-\epsilon} & \mbox{ if }  k=1,\\
	X^{1-\epsilon} & \mbox{ if } H\leq X^{\frac{2+\alpha}{4}-\epsilon} \text{ and }  k=2,\\
	X^{1-\epsilon} & \mbox{ if } k\geq 3.
	\end{cases}
	\end{align*}
\end{proposition}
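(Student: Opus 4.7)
Interchanging summation and using the identity $\lfloor (x+H)/d^k\rfloor-\lfloor x/d^k\rfloor=H/d^k-\{(x+H)/d^k\}+\{x/d^k\}$, the integrand of $\mathcal{J}_{k,z}(H;h)$ can be rewritten as $|\sum_{d^k\leq z}h(d)\phi_d(x)|^2$, where $\phi_d(x):=\{x/d^k\}-\{(x+H)/d^k\}$. Expanding the square and swapping the sum with the integral reduce the task to evaluating, for each pair $(d_1,d_2)$ with $d_1^k,d_2^k\leq z$, the two-point correlation
\[
I(d_1,d_2):=\frac{1}{X}\int_X^{2X}\phi_{d_1}(x)\overline{\phi_{d_2}(x)}\,dx.
\]

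Since $\phi_{d_1}\phi_{d_2}$ has period $L:=\mathrm{lcm}(d_1,d_2)^k$, the integral $I(d_1,d_2)$ equals the full-period average plus an error $O(L/X)$ whenever $L\leq X$; for $L>X$ I would use the trivial bound $|I(d_1,d_2)|\leq 1$. To evaluate the full-period average I would expand each sawtooth in Fourier series $\{y\}-\tfrac12=-\sum_{n\neq 0}e(ny)/(2\pi i n)$, apply Parseval on $[0,L]$ (which restricts the pairs of Fourier indices $(n,m)$ to $n/d_1^k+m/d_2^k=0$), and use the classical identity $\sum_{n\neq 0}\sin^2(\pi nt)/n^2=\pi^2\{t\}(1-\{t\})$. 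The outcome is the clean formula
\[
\frac{1}{L}\int_0^L\phi_{d_1}(x)\phi_{d_2}(x)\,dx=\frac{(d_1,d_2)^k}{\mathrm{lcm}(d_1,d_2)^k}\Big(\Big\{\tfrac{H}{(d_1,d_2)^k}\Big\}-\Big\{\tfrac{H}{(d_1,d_2)^k}\Big\}^{2}\Big).
\]

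To recover the Euler product shape of $c_{h,k}(H)$, I would parametrize $d_1=e d_1'$, $d_2=e d_2'$ with $e=(d_1,d_2)$ and $(d_1',d_2')=1$, so that the ratio above simplifies to $(d_1'd_2')^{-k}$. For $h\in\mathcal{G}_{\alpha}$, complete multiplicativity gives $h(d_1)h(d_2)=h(e)^2h(d_1')h(d_2')$, and the inner sum over coprime $(d_1',d_2')$ factors over primes as
\[
\prod_p\bigg(2\sum_{a\geq 0}\frac{h(p)^a}{p^{ka}}-1\bigg)=\prod_p\frac{p^k+h(p)}{p^k-h(p)},
\]
which matches the prefactor of $c_{h,k}(H)$ in the $\mathcal{G}_{\alpha}$ case. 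For $h\in\mathcal{M}_{\alpha}^{\mu}$, $h$ is supported on square-free integers, so the analogous factorization yields the local factor $1+2h(p)/p^k$ instead. Summing over $e$ and then extending this sum to infinity produces the stated $c_{h,k}(H)$.

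The remaining work is error control. Two error sources need to be balanced: (i) the accumulated periodicity error of total size $X^{-1}\sum_{d_1,d_2\leq z^{1/k}}|h(d_1)h(d_2)|\,\mathrm{lcm}(d_1,d_2)^k$, together with the trivial contribution from pairs with $L>X$, and (ii) the tail $\sum_{e^k>z}h(e)^2(\{H/e^k\}-\{H/e^k\}^2)$ from completing the sum over $e$. Both admit power-saving bounds under $\alpha>1/2$ because $\sum h(d)^2$ converges absolutely in this range. The main obstacle is precisely this balancing: error (i) grows with $z$ while error (ii) shrinks with $z$, and the optimum depends delicately on $k$ and $\alpha$, which is what produces the intricate case-by-case upper bounds on $z$ in the statement. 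When $\alpha\in[1,2)$ the sum $\sum h(d)^2$ converges so rapidly that one may simply take $z=2X$ and only error source (i) needs to be controlled, producing the simpler range $H\leq X^{1-\epsilon}$ (respectively $H\leq X^{(2+\alpha)/4-\epsilon}$ when $k=2$).
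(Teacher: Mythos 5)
Your plan recovers the correct main term, and the Euler-product reduction to $c_{h,k}(H)$ is fine, but the error control is too lossy and will not yield the stated ranges for $z$ when $\alpha\in(1/2,1)$. The paper's route is essentially different in exactly the step you treat most casually: rather than the crude per-pair periodicity error $O(L/X)$ with $L=\mathrm{lcm}(d_1,d_2)^k$, the paper (Lemma~\ref{newlem1}) inserts a smooth weight $\sigma(x/X)$ whose Fourier transform is supported in $[-BH^{\epsilon_2},BH^{\epsilon_2}]$, so that after expanding each sawtooth the off-diagonal contribution is localized to quadruples $(n_1,n_2,d_1,d_2)$ with $0<|n_1/d_1^k-n_2/d_2^k|\le BH^{\epsilon_2}/X$. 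That localized quantity $\mathcal{E}_{k,h}(H)$ is then bounded in Lemma~\ref{E(H)} by counting integer solutions of $0<|n_1 d_2^k-n_2 d_1^k|\le Z$ with $Z\asymp H^{\epsilon_2}D_1^kD_2^k/X$, using Mahler's asymptotic (Lemma~\ref{a result of Mahler}) and the Heath-Brown/Stewart–Xiao large-box bound (Lemma~\ref{a result of Stewart and Xiao}); for $k=2$ one also needs the rational-versus-quadratic-irrational dichotomy imported from Gorodetsky–Matomäki–Radziwiłł–Rodgers. It is precisely this rational-point machinery that produces the parameter $\nu$ and the intricate exponents in the $z$-constraints.

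To see that your version is quantitatively weaker, take $k=1$ and $\alpha\in(1/2,1)$. Your periodicity error
\[
X^{-1}\sum_{d_1,d_2\le z}|h(d_1)h(d_2)|\,\mathrm{lcm}(d_1,d_2)
\asymp X^{-1}z^{2(2-\alpha)+o(1)}
\]
(after parametrizing $d_i=eb_i$, $(b_1,b_2)=1$, using $|h(d)|\ll d^{-\alpha+o(1)}$) forces $z\ll X^{\frac{1}{2(2-\alpha)}}$, which is strictly smaller than the allowed $z\le X^{\frac{1}{2(1-\alpha)}}H^{-\frac{1}{2(1-\alpha)}-\epsilon}$, and in fact cannot be made compatible with the complementary lower bound $z\ge H^{\frac{k}{k+2\alpha-5/4}+\epsilon}$ required in Proposition~\ref{19.04.22.6:26} for the full claimed range of $H$. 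The same shortfall occurs for $k\ge2$, and there is simply no way your scheme can generate the $\nu$-dependent exponents since it never sees the binary-form counting problem. Your last paragraph, claiming that the two error sources ``balance'' to yield the stated constraints, is therefore wishful: the balance point of your two errors sits at a strictly smaller value of $z$. (Your treatment of $\alpha\in[1,2)$ with $z=2X$ is essentially sound, since there $\sum|h(d)|$ converges and the crude estimates do give power savings.) If you want to keep an argument in the spirit of ``expand in sawtooths and exploit periodicity,'' you must, like the paper, truncate the Fourier expansion, isolate the diagonal $n_1d_2^k=n_2d_1^k$, and apply the binary-form lattice-point lemmas to the near-diagonal off-diagonal terms.
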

The next proposition provides an upper bound for $\mathcal{K}_{h,z}(X; H)$ for various ranges of $\alpha, \beta$ and $k$.
\begin{proposition}\label{19.04.22.6:26}
Let $f\in \mathcal{F}_{\alpha, \beta, k}$ with $\beta\in \mathbb{C}\setminus \{0\}$ and $\epsilon>0$ be given.
\begin{enumerate}[label=(\alph*)]
	\item  Assume that $0\leq \alpha<\frac12$, $k\geq 2$ and $X^\epsilon\leq H\leq X^{\frac{29k}{29k+42}-\epsilon}$. If $z\geq H^{\frac{k+2\alpha-1}{k+2\alpha-5/4}+\epsilon}$  then we have,
\begin{align*}
\mathcal{K}_{h,z}(X; H)\ll H^{\frac{1-2\alpha}{k}-\frac{\epsilon}{3}}.
\end{align*}
Under the Lindel\"{o}f hypothesis, the claim holds for $X^\epsilon\leq H\leq X^{\frac{k}{k+1}-\epsilon}$ and $z\geq H^{1+\epsilon}$.

\item
Assume that $\frac12< \alpha< 1$, $k\geq 1$, and $X^\epsilon\leq H\leq X^{\frac{29k}{29k+84(1-\alpha)}-\epsilon}$. If $z\geq H^{\max\big\{1, \frac{k}{k+2\alpha-5/4}\big\}+\epsilon}$ then we get
\[
\mathcal{K}_{h,z}(X; H)\ll H^{-\frac{\epsilon}{3}}.
\]
Under the Lindel\"{o}f hypothesis, we have the same estimate, whenever $X^\epsilon\leq H\leq X^{\frac{k}{k-2\alpha+2}-\epsilon}$ and $z\geq H^{1+\epsilon}$.

\item Assume that $1\leq \alpha< 2$, $k\geq 1$, and $H\geq X^\epsilon$. If $z\geq H^{1+\epsilon}$ then we get
\[
\mathcal{K}_{h,z}(X; H)\ll H^{-\frac{\epsilon}{3}}.
\]
\end{enumerate}
\end{proposition}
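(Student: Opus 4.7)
The plan is to control the tail contribution via a Plancherel identity on the critical line, combined with moment bounds for $\zeta$. First, I would rewrite the integrand by interchanging the two sums:
\[
\sum_{\substack{x<nd^k\leq x+H\\ d^k>z}} h(d) - H\sum_{d^k>z}\frac{h(d)}{d^k}
= \sum_{x<n\leq x+H}\widetilde{f}(n) - \mu H,
\]
where $\widetilde{f}(n):=\sum_{d^k\mid n,\,d^k>z}h(d)$ and $\mu:=\sum_{d^k>z}h(d)/d^k$. The crucial structural observation is that the generating Dirichlet series factors cleanly as $\widetilde{F}(s):=\sum_n \widetilde{f}(n)/n^s = \zeta(s)\,T_z(s)$ with $T_z(s) := \sum_{d^k>z} h(d)/d^{ks}$; hence the variance of $\widetilde{f}$ is amenable to a Plancherel-based analysis on $\sigma=1/2$.

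Next, I would apply a Saffari--Vaughan-type identity (push the Perron contour for $\sum_{n\le x}\widetilde{f}(n)$ to $\sigma=1/2$, smooth the indicator of $[X,2X]$, then invoke Mellin--Parseval) to express $\mathcal{K}_{k,z}(H;h)$ as an integral
\[
\mathcal{K}_{k,z}(H;h)\ll \int_{-\infty}^{\infty}\bigl|T_z(\tfrac12+it)\,\zeta(\tfrac12+it)\bigr|^2\,K_H(t)\,dt + \text{(negligible)},
\]
where the kernel $K_H(t)$ is concentrated on $|t|\lesssim X/H$ and decays like $1/t^2$ for larger $|t|$. I would then split dyadically into ranges $|t|\asymp T$ and combine three ingredients: (i) the Montgomery--Vaughan mean value theorem for $T_z$, which gives $\int_{|t|\le T}|T_z(\tfrac12+it)|^2\,dt \ll (T+X^{1/k})\, z^{-(k+2\alpha-1)/k+\epsilon}$ using $|h(d)|^2\ll d^{-2\alpha+\epsilon}$; (ii) Ingham's fourth moment $\int_0^T|\zeta(\tfrac12+it)|^4\,dt \ll T(\log T)^4$, applied via Cauchy--Schwarz; and (iii) a pointwise subconvex bound $\zeta(\tfrac12+it)\ll (1+|t|)^{\theta+\epsilon}$ for a suitable $\theta$ (e.g.\ Bourgain's $13/84$) to control the residual factor where advantageous. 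Under the Lindel\"of hypothesis, (iii) is replaced by $\zeta(\tfrac12+it)\ll |t|^\epsilon$, which is exactly what opens up the wider ranges $H\le X^{k/(k+1)-\epsilon}$ (Case~1) and $H\le X^{k/(k-2\alpha+2)-\epsilon}$ (Case~2).

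Combining (i)--(iii) and optimizing across the dyadic pieces produces an estimate of the shape $H^{a(\theta)}X^{b(\theta)}\,z^{-(k+2\alpha-1)/k+\epsilon}$ with explicit $a,b$. Demanding this to be at most $H^{(1-2\alpha)/k-\epsilon/3}$ in Case~1 (respectively $H^{-\epsilon/3}$ in Case~2) forces both the stated upper bound on $H$ and the lower bound on $z$; the specific constants $29$, $42$, $84$, and $5/4$ appearing in the proposition are the direct trace of the subconvex exponent $\theta=13/84$ together with the moments of $\zeta$ deployed in Cauchy--Schwarz. Case~2 runs on the same machinery: the faster decay of $\sum_{d^k>z}|h(d)|^2/d^k$ when $\alpha>1/2$ is what upgrades the right-hand side from $H^{(1-2\alpha)/k-\epsilon/3}$ (Case~1 scale) to the constant-size $H^{-\epsilon/3}$.

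The hard part will be the delicate balancing of (i)--(iii) over the dyadic $T$-ranges: in each range one must decide whether to apply the fourth moment plus Cauchy--Schwarz or the pointwise subconvex bound, and the optimum shifts with $z$, $H$, and $X$. A secondary, more routine, issue is making the Plancherel step rigorous, which can be handled by smoothing the characteristic function of $[X,2X]$ against a Vaaler-type approximation to the sawtooth and truncating the Dirichlet series at height $X^{1+\epsilon}$; once this smoothing is in place, the analytic steps above go through with only negligible errors.
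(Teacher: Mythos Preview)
Your overall plan (Perron to $\sigma=1/2$, Plancherel, moments of $\zeta$, subconvexity) is the right framework, and under the Lindel\"of hypothesis it can be made to work. But there are two concrete gaps that prevent your argument from reaching the stated exponents.

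\textbf{First, the mean value bound you quote is too crude.} You write $\int_{|t|\le T}|T_z(\tfrac12+it)|^2\,dt \ll (T+X^{1/k})\,z^{-(k+2\alpha-1)/k+\epsilon}$, where $X^{1/k}$ is the length of the full polynomial $T_z$. Feeding this into the Lindel\"of case gives only $H\le X^{(k-1)/k-\epsilon}$, not the claimed $H\le X^{k/(k+1)-\epsilon}$. The paper avoids this by first splitting $d$ into dyadic ranges $d\sim D$ with $D\in[z^{1/k},(2X)^{1/k}]$ and working with the short polynomial $B_h(s)=\sum_{d\sim D}h(d)d^{-s}$; then the mean value gives $(T+D)D^{-(k+2\alpha-1)}$, and the $D$-term sums to $z^{-(k+2\alpha-2)/k}$ rather than $X^{1/k}z^{-(k+2\alpha-1)/k}$. (Equivalently you could use the weighted Montgomery--Vaughan bound $\sum_n|a_n|^2(T+n)$, but you have not.)

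\textbf{Second, and more seriously, your ingredients (i)--(iii) are insufficient unconditionally.} The paper's proof hinges on a level-set decomposition $\{t:|B_h|\sim V\}$ together with the Hal\'asz--Montgomery large value theorem for Dirichlet polynomials (their Lemma~5.1), which bounds the measure of the large-value set by $NSV^{-2}+T\min\{SV^{-2},S^3NV^{-6}\}$. It is precisely the interplay between the \emph{two} branches of this minimum (one handled via Bourgain's pointwise bound, the other via Cauchy--Schwarz against the fourth moment of $\zeta$) that produces the exponent $\tfrac{5}{4}$ in the $z$-condition and the $29k/(29k+42)$ in the $H$-range. If you only combine the fourth moment with a pointwise bound $\sup|T_z|$ as in your (ii), the best you get is $z\ge H^{2(k+2\alpha-1)/(2k+4\alpha-3)+\epsilon}$, strictly weaker than the required $z\ge H^{(k+2\alpha-1)/(k+2\alpha-5/4)+\epsilon}$. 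You need the large value estimate; without it the balancing you allude to cannot land on the stated constants.
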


\begin{remark}
Note that when $\frac12< \alpha< 1$, the exponents of short intervals in Theorem \ref{main theorem for lower growth rate} are influenced by the range $H\leq X^{\frac{29k}{29k+84(1-\alpha)}-\epsilon}$ from Proposition \ref{19.04.22.6:26}. In this context, we used the best known Bourgain's bound instead of the conventional sub-convexity bound for the Riemann zeta function, resulting in improved exponents.
\end{remark}	
Let us now establish the discrete variance of $f\in \mathcal{F}_{\alpha, \beta, k}$ with $\beta\in \mathbb{Z}\setminus \{0\}$ in short intervals, assuming that $X$ is sufficiently large. The discrete variance is defined as follows:
\begin{align}\label{discrete variance2}
\Var_f^{\mathcal{D}}(X; H):=\frac{1}{X}\sum_{n\leq X}\left(\sum_{j=1}^H f(n+j) -\widetilde{c}_{h, k}H \right)^2.
\end{align}
The exponent of short intervals is defined as follows: 
\begin{align}\label{exponent discrete case}
\theta(k, \alpha):=\begin{cases}
\frac{k(k+\alpha-1)}{(k-\alpha+1)(2k+2\alpha-1)} & \mbox{ if } \quad 0\leq \alpha <\frac{1}{2},\\
	\frac{k+\alpha-1}{2(k-\alpha+1)} & \mbox{ if } \quad \frac12< \alpha <2.	
		\end{cases}
		\end{align}
		
\begin{proposition}\label{prop5}
Assume that $0<\epsilon<\frac{3(1-2\alpha)}{k(k\beta^2+3)}$ with $\alpha\in [0, 1/2)$ and $k\geq 2$.  Let $\theta(k, \alpha)$ be as in \eqref{exponent discrete case} such that $2\leq H \leq X^{\theta(k, \alpha)-\epsilon}$. Then we have
\begin{align*}
\Var_f^{\mathcal{D}}(X; H)=
 	c_{h, k}H^{\frac{1-2\alpha}{k}}P_{\beta^2-1}(\log H)
   +O\left(H^{\frac{1-2\alpha}{k}-\epsilon}\right),
\end{align*}
where $c_{h, k}$ and $P_{\beta^2-1}(\log H)$  are defined by  \eqref{constant 2} and \eqref{logarithm in the main term} respectively.
\end{proposition}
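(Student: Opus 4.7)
The plan is to mirror the continuous-variance strategy of Theorem \ref{main theorem for simultaneouly k-free intergers}, with the integral $\frac{1}{X}\int_X^{2X}$ replaced by the discrete average $\frac{1}{X}\sum_{n\leq X}$ throughout. Since $f(n)=\sum_{d^k\mid n}h(d)$, for any integer $n$ one has the Gauss-type identity
\begin{align*}
\sum_{j=1}^{H}f(n+j)-\widetilde{c}_{h,k}H=\sum_{d=1}^{\infty}h(d)\Big(\big\{n/d^k\big\}-\big\{(n+H)/d^k\big\}\Big).
\end{align*}
I would split this sum at a truncation parameter $z$ into low-divisor and high-divisor pieces $S_{\leq z}(n)+S_{>z}(n)$, and bound the variance using $(a+b)^{2}\leq 2a^{2}+2b^{2}$, handling each piece separately.

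For the low-divisor piece, expand the fractional parts via their (truncated) Fourier series and expand the square to reduce the variance to a double sum over $d_1,d_2$ with $d_i^k\leq z$ and frequencies $m_1,m_2\neq 0$, weighted by the discrete exponential average
\begin{align*}
\frac{1}{X}\sum_{n\leq X}e\!\left(n\Big(\tfrac{m_1}{d_1^{k}}-\tfrac{m_2}{d_2^{k}}\Big)\right).
\end{align*}
The diagonal contribution ($m_1/d_1^k=m_2/d_2^k$) is identical to the diagonal in the continuous case, so the same Dirichlet-series/residue analysis feeding into Proposition \ref{below z-barrier} delivers the main term $c_{h,k}H^{(1-2\alpha)/k}P_{\beta^2-1}(\log H)$ with the same constants from \eqref{constant 2} and \eqref{logarithm in the main term}. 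For the high-divisor piece, I would invoke the discrete analog of Proposition \ref{19.04.22.6:26}; the tail argument there rests on zeta moment estimates and equidistribution, both of which transfer to integer averages without change in the residue structure.

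The main obstacle is controlling the off-diagonal contribution, where the situation genuinely differs from the continuous case. In the integral case one gains $\min\!\big(X,1/|m_1/d_1^k-m_2/d_2^k|\big)$, but discretely only the weaker $\min\!\big(X,1/\|m_1/d_1^k-m_2/d_2^k\|\big)$, where $\|\cdot\|$ denotes distance to the nearest integer. Splitting according to whether $m_1 d_2^{k}\equiv m_2 d_1^{k}\pmod{d_1^{k}d_2^{k}}$ and counting the near-integer pairs via the binary-form estimates used throughout the paper, one reduces the off-diagonal to a counting problem for solutions of $|m_1 d_2^{k}-m_2 d_1^{k}|\ll d_1^{k}d_2^{k}/X$ with $d_i^{k}\leq z$. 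Balancing this against the truncation threshold in Proposition \ref{19.04.22.6:26} forces $z$ to be chosen smaller than in the continuous case, and optimising $z$ against $H$ yields exactly the stronger constraint $H\leq X^{\theta(k,\alpha)-\epsilon}$ with $\theta(k,\alpha)$ defined by \eqref{exponent discrete case}. With this choice of $z$, the low-frequency off-diagonal and the high-divisor tail are both $O(H^{(1-2\alpha)/k-\epsilon})$, completing the proof.
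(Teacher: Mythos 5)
Your proposal takes a genuinely different route from the paper's, and it has a real gap that I do not think can be patched without changing strategy.

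The paper never Fourier-expands the fractional parts in the discrete case. Instead it opens the divisor sums in $S_1=\frac{1}{X}\sum_{j_1,j_2\leq H}\sum_{n\leq X}f(n+j_1)f(n+j_2)$, truncates at $z$, applies Cauchy--Schwarz to the tail, and then uses the Chinese Remainder Theorem to compute $\sum_{n\leq X}f_z(n+j_1)f_z(n+j_2)$ exactly via $\varrho(j_1,j_2,d_1,d_2)$, which is nonzero only when $(d_1,d_2)^k\mid j_1-j_2$. The resulting quantity $R(H)=\sum_{j_1,j_2\leq H}A(j_1,j_2)$ is then a summatory function of a multiplicative function $f_h$; following Hall, it is written as a Perron-type contour integral (Lemma \ref{lemH}) and evaluated by shifting the contour and extracting the residue at $s=\frac{1-2\alpha-k}{k}$ (Lemma \ref{newlem3}). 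No binary-form counting and no exponential sums appear. The constraint $H\leq X^{\theta(k,\alpha)-\epsilon}$ falls out of the Cauchy--Schwarz tail bound at $z=X^{1/(k-\alpha+1+\epsilon_1)}$, not from any diagonal/off-diagonal optimisation.

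The gap in your version is the treatment of the ``diagonal.'' After expanding the fractional parts into finite Fourier series, the relevant weight is $\frac{1}{X}\sum_{n\leq X}e\big(n(m_1/d_1^k-m_2/d_2^k)\big)$, and this is of size $1$ whenever $m_1/d_1^k-m_2/d_2^k\in\mathbb{Z}$, not merely when it vanishes. The terms with $m_1/d_1^k-m_2/d_2^k=\ell$ for nonzero integers $\ell$ are full-strength contributions with no analogue in Lemma \ref{newlem1}, and your sketch never isolates or estimates them; since the Fourier frequencies $m_1,m_2$ run up to a truncation of polynomial size in $X$, the equation $m_1d_2^k-m_2d_1^k=\ell\, d_1^kd_2^k$ admits many solutions with $\ell\neq0$. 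Likewise, the near-integer off-diagonal $\|m_1/d_1^k-m_2/d_2^k\|\ll H^{O(\epsilon)}/X$ is a three-parameter Diophantine condition in $(m_1,m_2,\ell)$, which the two-variable binary-form lemmas (Lemmas \ref{a result of Mahler}, \ref{a result of Stewart and Xiao}) used for $\mathcal{E}_{k,h}(H)$ in the continuous case do not directly cover. Finally, the claim that optimising $z$ against $H$ ``yields exactly'' the exponent $\theta(k,\alpha)$ from \eqref{exponent discrete case} is asserted rather than derived; in the paper's proof that exponent has nothing to do with such an optimisation, and there is no reason to expect your mechanism to reproduce it.
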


\begin{remark}
	Observe that Proposition \ref{prop5} establishes the main result of Avdeeva \cite{Av1} and Proposition 1.7 of \cite{GMR} in specific instances involving $B$-free integers, in particular $k$-free integers.
\end{remark}

\begin{proposition}\label{prop6}
Let $\epsilon >0$ be given, $1/2<\alpha<2$, and $\theta(k, \alpha)$ be as in \eqref{exponent discrete case}. For $k\geq 1$ and $2\leq H \leq  X^{\theta(k, \alpha)-\epsilon}$, 
\begin{align*}
&\Var_f^{\mathcal{D}}(X; H)= c_{h, k}(H)+O(H^{-\epsilon}).
\end{align*}
\end{proposition}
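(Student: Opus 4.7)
The approach parallels Proposition \ref{proposition for lower growth family}, replacing the $x$-integral with a sum over integers $n\leq X$. Starting from $f(n)=\sum_{d^k\mid n}h(d)$, the elementary count $\#\{1\leq j\leq H: d^k\mid n+j\}=H/d^k+\{n/d^k\}-\{(n+H)/d^k\}$ yields
\[
\sum_{j=1}^H f(n+j)-\widetilde{c}_{h,k}H=\sum_{d\geq 1}h(d)\,\varepsilon_d(n,H),\qquad \varepsilon_d(n,H):=\{n/d^k\}-\{(n+H)/d^k\}.
\]
The plan is to split the outer sum at $d^k\leq z$ versus $d^k>z$, with $z$ to be optimized, and treat the two pieces separately.

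For the small-$d$ contribution, I would expand the square of $\sum_{d^k\leq z}h(d)\varepsilon_d(n,H)$, interchange summation, and evaluate the $n$-average $\frac{1}{X}\sum_{n\leq X}\varepsilon_{d_1}(n,H)\varepsilon_{d_2}(n,H)$ by exploiting the periodicity of $\{\cdot/d_i^k\}$ modulo $L=\mathrm{lcm}(d_1^k,d_2^k)$. Replacing the discrete sum by the length-$L$ periodic mean costs $O(L/X)$ per pair; summing the periodic averages against $h(d_1)h(d_2)$ and factoring the resulting multiplicative function as an Euler product reproduces precisely the constant $c_{h,k}(H)$ of \eqref{C_h,k}, via the same local calculation as in the continuous case. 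For the tail $\sum_{d^k>z}h(d)\varepsilon_d(n,H)$, I would apply a truncated Fourier expansion of the sawtooth function to each $\varepsilon_d(n,H)$, invoke Parseval in $n\leq X$, and reduce the estimate to bounding exponential sums on the binary form $(d,m)\mapsto m/d^k$ by subconvexity estimates for $\zeta(\tfrac12+it)$ (Bourgain's bound when $\alpha\in(1/2,1)$ and $k\in\{1,2\}$, and convexity elsewhere), mirroring the argument of Proposition \ref{19.04.22.6:26}. The bound $\varepsilon_d(n,H)\ll H/d^k$ on average further sharpens the tail estimate in the regime $d^k>H$.

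The principal obstacle is the intermediate range $d^k\asymp X$, where the periodic-average argument loses (since the period $L$ becomes comparable to $X$) while the exponential-sum estimate also becomes weak. Balancing these two competing errors against the Fourier cutoff and the truncation parameter $z$ forces exactly the exponent $\theta(k,\alpha)$ of \eqref{exponent discrete case} and yields the claimed $O(H^{-\epsilon})$ error under $H\leq X^{\theta(k,\alpha)-\epsilon}$.
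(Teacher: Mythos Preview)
Your overall decomposition is sound, and your main-term computation via periodic means of $\varepsilon_{d_1}\varepsilon_{d_2}$, landing directly on the fractional-part expression $\{H/d^k\}-\{H/d^k\}^2$, is actually \emph{more} elementary than the paper's route: the paper expands the square as the correlation sum $S_1=\frac{1}{X}\sum_{j_1,j_2\le H}\sum_{n\le X}f(n+j_1)f(n+j_2)$, evaluates the main term $\sum_{j_1,j_2}A(j_1,j_2)$ through a contour integral (Lemma~\ref{lemH}), and only then identifies the result with $c_{h,k}(H)$ via Lemma~\ref{connect between sum and infinite integral}.

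However, your tail treatment is substantially over-engineered and your diagnosis of the bottleneck is off. The paper truncates $f=f_z+f_z^c$ at $d\le z$ and removes $f_z^c$ by an \emph{elementary} Cauchy--Schwarz combined with the Chinese remainder bound $\sum_{n\le X}|f_z^c(n)|^2\ll X z^{-2(k+\alpha-1)}$. No Fourier expansion of the sawtooth, no Parseval, and no subconvexity (Bourgain or otherwise) enter the discrete case at all. The two errors $H^2 z^{-(k+\alpha-1)}$ (tail) and $H^2 z^{2(1-\alpha)}/X$ (from the $O(1)$-per-pair CRT error) balance at $z=X^{1/(k-\alpha+1)}$, giving $H^2 X^{-(k+\alpha-1)/(k-\alpha+1)}$ and hence $\theta(k,\alpha)$ directly. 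In your formulation the analogous elementary tail bound (Minkowski over $j$ applied to $\sum_j f_z^c(\cdot+j)$) gives $H^2 z^{-2(k+\alpha-1)}$, and balancing against your periodicity error $\sum|h(d_1)h(d_2)|\,[d_1^k,d_2^k]/X$ recovers the same exponent with no analytic input. So there is no ``intermediate range $d^k\asymp X$'' obstacle requiring exponential-sum machinery here; replace the subconvexity plan with the two-line Cauchy--Schwarz and the argument goes through.
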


\begin{remark}
Recalling $e(k, \alpha)$ from \eqref{10/5/14:24}, we observe that in the case of continuous variance, the range $X^{\epsilon}\leq H\leq X^{e( k, \alpha)-\epsilon}$ holds. However, in the discrete case, the range narrows to $2\leq H\leq X^{\theta(k, \alpha)-\epsilon}$. Notably, $\theta(k, \alpha)$ remains bounded by $\frac12$, and overcoming this $\frac12$-barrier through a discrete process appears to be highly challenging. Additionally, $e(k, \alpha)$ significantly surpasses $\theta(k, \alpha)$ as depicted in Figure \ref{continuous and discrete exponents} (refer to the Appendix).  
\end{remark}

\section{Proof of Proposition \ref{below z-barrier} and Proposition \ref{proposition for lower growth family}}\label{section 5}
We start by introducing several key lemmas that are crucial for proving the propositions. The lemmas presented in the initial subsections involve applications of complex and Fourier analysis techniques as outlined in \cite{GMRR}. In the subsequent subsection, we focus on estimating a specific double sum by employing a method that counts rational points on a particular binary form. By using the Fourier series of Bernoulli's polynomial, we derive the subsequent standardized formula.
\begin{lemma}\label{lem1}
 For a real number $x$, we have 
 \begin{align*}
 \sum_{n =1}^{\infty}\frac{\cos{(2\pi n x)}}{n^2} = \pi^2\left(\{x\}^2 - \{x\}\right) + \zeta(2),
 \end{align*}
 where $\{x\}$ denotes the fractional part of $x$.
 \end{lemma}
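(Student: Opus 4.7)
The plan is to derive the identity as the Fourier expansion of the $1$-periodic function $f(x) = \{x\}^2 - \{x\}$, which is (up to an additive constant $1/6$) the periodic Bernoulli polynomial $B_2(\{x\})$. First, observe that $f$ is continuous on $\mathbb{R}$ — it vanishes at every integer, with matching left and right limits — and is piecewise $C^{\infty}$ of bounded variation, so by standard results on Fourier series its Fourier expansion converges absolutely and uniformly to $f(x)$ for every real $x$.

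Next, I would compute the Fourier coefficients of $x^2 - x$ on $[0,1]$. The mean value is
\[
\int_0^1 (x^2 - x)\,dx = -\tfrac{1}{6}.
\]
Because $(1-x)^2 - (1-x) = x^2 - x$, the function is symmetric about $x = 1/2$, so all sine coefficients vanish. For $n \geq 1$, performing two successive integrations by parts (the boundary terms being killed by $x^2 - x = 0$ at $x \in \{0,1\}$ in the first step) yields
\[
2\int_0^1 (x^2 - x)\cos(2\pi n x)\,dx = \frac{1}{\pi^2 n^2}.
\]

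Putting these together, the uniformly convergent Fourier expansion reads
\[
\{x\}^2 - \{x\} = -\frac{1}{6} + \frac{1}{\pi^2}\sum_{n=1}^{\infty}\frac{\cos(2\pi n x)}{n^2},
\]
and rearranging together with $\pi^2/6 = \zeta(2)$ gives the claim. The argument is entirely routine; the only mild care is justifying pointwise equality at every $x$ (including integers), which follows immediately from the fact that a continuous, piecewise-smooth $1$-periodic function coincides with its Fourier series everywhere.
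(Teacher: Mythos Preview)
Your proof is correct and follows exactly the approach the paper indicates: the paper does not give a detailed proof but simply states that the identity follows from the Fourier series of Bernoulli's polynomial, which is precisely the computation you carry out. Your treatment, including the verification of pointwise convergence at the integers via continuity and piecewise smoothness, supplies the routine details the paper omits.
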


\subsection{Application of Fourier analysis and the Cauchy residue theorem}

To establish our lemmas, we shall examine the integral 
 \[   
 \int_0^\infty  x^{\frac{k+2\alpha-1}{k}}S(x)^2(\log x)^j dx , \quad j=0, 1, \ldots, \beta^2-1,
\]
where $S(x)= \frac{\sin \pi x}{\pi x}$ is defined to be continuous at $x=0$, and $k \geq 2$. Referring to~\cite[Formula 3.823]{GR14}, we observe that for $-2< \Re({z}) < 0$,
\begin{align*}
\int_0^\infty x^{z+1}S(x)^2 dx = -\frac{1}{2\pi^2}\frac{\Gamma(z)\cos(\frac{\pi z}{2})}{(2\pi)^z}= -\frac{\chi(1-z)}{4\pi^2}, 
\end{align*}
where $\chi$ is defined by \eqref{10/5/00:43}.
To further analyze this expression, we apply differentiation under the integral sign at $z=\frac{2\alpha-1}{k}$ to obtain the following lemma.
\begin{lemma}\label{nice int}
	Let $S(x)$ and $\chi$ be defined as above. For $j=0, 1, \ldots, \beta^2-1$, we have
\begin{align}\label{neq2}
\int_0^\infty  x^{\frac{k+2\alpha-1}{k}}S(x)^2(\log x)^j dx = \frac{(-1)^{j+1}}{4\pi^2}\chi^{(j)}\Big(1+\frac{1-2\alpha}{k}\Big).
\end{align}
\end{lemma}
\begin{lemma}\label{lemma of main term}
Let $\beta \in \mathbb{Z}\setminus \{0\}$ and $0\leq \alpha <1/2$. Let $0<\epsilon <  \frac{1-2\alpha}{2(\lceil\beta^2/12\rceil+4)}$ be given and $h\in \mathcal{M}_{\alpha, \beta}\cup \mathcal{G}_{\alpha, \beta}$.
Then for $z\geq H^{1+\epsilon}$, we have 
\begin{align*}
I_{S, z}:=2H^2\sum_{d_1^k, d_2^k\leq z}\frac{h(d_1)h(d_2)}{d_1^kd_2^k}\sum_{\lambda\geq 1}S\left(\frac{H\lambda}{(d_1^k, d_2^k)}\right)^2=
c_{h, k}H^{\frac{1-2\alpha}{k}}P_{\beta^2-1}(\log H)
+O\left(H^{\frac{1-2\alpha}{k}-\theta_{k, \epsilon}}\right), 
\end{align*}
where $\theta_{2, \epsilon}=\frac{\epsilon}{5}$ and $\theta_{k, \epsilon}=\frac{\epsilon}{k}$ for $k\geq 3$,  $c_{h, k}$ and $P_{\beta^2-1}(\log H)$  are defined by \eqref{constant 2} and \eqref{logarithm in the main term}  respectively.

\begin{proof}[Proof of Lemma \ref{lemma of main term} \textbf{\it $($Approximate by a smooth function$)$}]
 Let us consider a smooth function $W: \mathbb{R}\to \mathbb{R}$ that serves as an approximation of $S(y)$. We require $W$ to satisfy the following conditions for all $\ell, m \in \{0, 1, \ldots, \lceil\beta^2/12\rceil+2\}$,

\begin{align}\label{bounds of smoth functions}
|W^{(m)}(y)|\leq K_0\frac{H^{\frac{\ell}{2k}\epsilon}}{1+|y|^\ell}, \quad \forall y\in \mathbb{R}.
\end{align}
We begin by working with the function $W$ in place of $S$ and want to compute $I_{W, z}$. We rewrite 
\begin{align*}
I_{W, z}=I_W+\mathcal{E}_{W, z},
\end{align*}
where
\[
I_W:=2H^2\sum_{d_1, d_2\geq 1}\frac{h(d_1)h(d_2)}{d_1^kd_2^k}\sum_{\lambda\geq 1}W\left(\frac{H\lambda}{(d_1^k, d_2^k)}\right)^2, 
\]
\[
\mathcal{E}_{W, z}:=2H^2\sum_{\substack{\max\{d_1, d_2\}>z^{1/k}}}\frac{h(d_1)h(d_2)}{d_1^kd_2^k}\sum_{\lambda\geq 1}W\left(\frac{H\lambda}{(d_1^k, d_2^k)}\right)^2.
\]
We now aim to bound the quantity $\mathcal{E}_{W, z}$. To do this, we use the bound for $W(y)$ from \eqref{bounds of smoth functions} in order to obtain 
\begin{align*}
\sum_{\lambda\geq 1}W(\lambda/u)^2\ll \sum_{\lambda\leq uH^{\frac{\epsilon}{2k}}}1+ \sum_{\lambda>u H^{\frac{\epsilon}{2k}}}\frac{H^{\frac{\epsilon}{k}}u^2}{\lambda^2}\ll
&\left\{
 	\begin{array}
 	[c]{ll}
 	u^2H^{\frac{\epsilon}{k}} & \, \text{ if   }  0<u\leq H^{-\frac{\epsilon}{2k}}, 	\\
uH^{\frac{\epsilon}{2k}} & \, \text{ if } u> H^{-\frac{\epsilon}{2k}}.
 	\end{array}
 	\right.
\end{align*}

Consequently, this allows us to obtain
\begin{align*}
\mathcal{E}_{W, z}&\ll H^{\epsilon/k}\sum_{\substack{d_1>z^{1/k}, d_2\geq 1\\ (d_1, d_2)^k\leq H^{1-\frac{\epsilon}{2k}}}}\frac{|h(d_1)h(d_2)|(d_1, d_2)^{2k}}{d_1^kd_2^{k}}+H^{1+\frac{\epsilon}{2k}}\sum_{\substack{d_1>z^{1/k}, d_2\geq 1\\ (d_1, d_2)^k>H^{1-\frac{\epsilon}{2k}}}}\frac{|h(d_1)h(d_2)|(d_1, d_2)^k}{d_1^kd_2^{k}}\\
&:=\mathcal{E}_{W, z}^{'} +\mathcal{E}_{W, z}^{''}.
\end{align*}

We first compute $\mathcal{E}_{W, z}^{''}$. Observe that $h(d)\ll_{\epsilon_1} \frac{1}{d^{\alpha-\epsilon_1}}$ for any sufficiently small $\epsilon_1>0$. 
Let us denote $(d_1, d_2)=d_0$, then $d_i=b_i d_0$ with $i=1,2$. For $k\geq 2$,
\begin{align*}
& \mathcal{E}_{W, z}^{''} \ll H^{1+\frac{\epsilon}{2k}} \sum_{d_0^k\geq H^{1-\frac{\epsilon}{2k}}}\frac{1}{d_0^{k+2\alpha-2\epsilon_1}}\sum_{\substack{b_1\geq H^{(1+\epsilon)/k}/d_0\\ b_2\geq 1}}\frac{1}{(b_1b_2)^{k+\alpha-\epsilon_1}}\\
&\ll H^{1+\frac{\epsilon}{2k}}\sum_{H^{1/k-\epsilon/2k^2}\leq d_0\leq H^{(1+\epsilon)/k}}\frac{1}{d_0^{k+2\alpha-2\epsilon_1}}\Big(\frac{d_0}{ H^{(1+\epsilon)/k}}\Big)^{k+\alpha-\epsilon_1-1} +  H^{1+\frac{\epsilon}{2k}}\sum_{d_0\geq H^{(1+\epsilon)/k}}\frac{1}{d_0^{k+2\alpha-2\epsilon_1}}\\
&\ll H^{\frac{1-2\alpha}{k}-\left(\frac{k+\alpha-\epsilon_1-1}{k}-\frac{\alpha-\epsilon_1}{2k^2}-\frac{1}{2k}\right)\epsilon+\frac{2\epsilon_1}{k}}.
\end{align*}

Therefore, by choosing a sufficiently small value of $\epsilon_1>0$, we have
\begin{align}\label{error term}
\mathcal{E}_{W, z}^{''}\ll 
\begin{cases}
H^{\frac{1-2\alpha}{2}-\frac{\epsilon}{3}}, & \mbox{ if }  k=2,
\\
 H^{\frac{1-2\alpha}{k}-\frac{\epsilon}{k}}, & \mbox{ if }  k\geq 3.
 \end{cases}
\end{align}

Similarly, we can simplify  $\mathcal{E}_{W, z}^{'}$  as 

\begin{align*}
& \mathcal{E}_{W, z}^{'} \ll H^{\frac{\epsilon}{k}} \sum_{d_0^k\leq H^{1-\frac{\epsilon}{2k}}}\frac{1}{d_0^{2\alpha-2\epsilon_1}}\sum_{\substack{b_1\geq H^{(1+\epsilon)/k}/d_0\\ b_2\geq 1}}\frac{1}{(b_1b_2)^{k+\alpha-\epsilon_1}}\\
&\ll H^{\frac{\epsilon}{k}} \sum_{d_0^k\leq H^{1-\frac{\epsilon}{2k}}}\frac{1}{d_0^{2\alpha-2\epsilon_1}}\Big(\frac{d_0}{ H^{(1+\epsilon)/k}}\Big)^{k+\alpha-\epsilon_1-1}\\
&\ll H^{\frac{1-2\alpha}{k}-\left(\frac{k+\alpha-\epsilon_1-1}{k}+\frac{k-\alpha +\epsilon_1}{2k^2}-\frac{1}{k}\right)\epsilon+\frac{2\epsilon_1}{k}}.
\end{align*}
Therefore, again by choosing a sufficiently small value of $\epsilon_1>0$, we have
\begin{align}\label{error term2}
\mathcal{E}_{W, z}^{'}\ll 
\begin{cases}
H^{\frac{1-2\alpha}{2}-\frac{\epsilon}{5}}, & \mbox{ if }  k=2,
\\
H^{\frac{1-2\alpha}{k}-\frac{\epsilon}{k}}, & \mbox{ if }  k\geq 3.
\end{cases}
\end{align}
 Hence, combining \eqref{error term} and \eqref{error term2}, we have
\begin{align}\label{I_W}
I_{W, z}=I_W+O\left(H^{\frac{1-2\alpha}{k}-\theta_{k, \epsilon}}\right).
\end{align}

Now we want to evaluate the term $I_W$. To simplify the expression of $I_W$, we employ the technique of contour integration. Let us define $w(x)=W(e^x)^2 e^x$, where we note that $w(x)$ is a smooth function that exhibits exponential decay as $|x|\to \infty$. Fourier inversion gives 
\begin{align}\label{hat w}
\widehat{w}(\xi)=\int_{-\infty}^\infty W(e^x)^2e^x e(-x\xi)dx=\int_0^\infty W(y)^2 y^{-2\pi i \xi}dy.
\end{align}
Integration by parts together with \ref{bounds of smoth functions} reveals that $\widehat{w}(\xi)$ is an analytic function in strip $|\Im(\xi)|<\frac{1}{2\pi}$ and uniformly in this region,
\begin{align}\label{bound of hatw}
\widehat{w}(\xi)=O\left(\frac{H^{\frac{(\lceil\beta^2/12\rceil+2)\epsilon}{2k}}}{(|\xi|+1)^{\lceil\beta^2/12\rceil+2}}\right).
\end{align}
Also, the Fourier inversion formula implies that for $r>0$,
\[
W(r)^2=\frac{1}{2\pi i r}\int_{(c)}r^s\widehat{w}\left(\frac{s}{2\pi i}\right)ds,
\]
where the integral is over $\Re(s)=c$ such that  $-1<c<1$.
Taking $c=-\frac{1}{4}$, 
\begin{align*}
I_W=\frac{H}{i\pi}\int_{(-1/4)}H^s\zeta(1-s)\bigg(\sum_{d_1, d_2\geq 1}\frac{h(d_1)h(d_2)}{d_1^kd_2^k}\frac{(d_1, d_2)^k}{(d_1, d_2)^{ks}}\bigg) \widehat{w}\left(\frac{s}{2\pi i}\right)ds. 
\end{align*}
\noindent
{\it Case 1.}
	 Assume that  $h\in \mathcal{M}_{\alpha, \beta}$.
In this case, we obtain
\begin{align*}
\sum_{d_1, d_2\geq 1}\frac{\mu(d_1)\mu(d_2)r(d_1)r(d_2)}{(d_1d_2)^{k+\alpha}}(d_1, d_2)^{k(1-s)}= B(s)\zeta^{\beta^2}(k+ks+2\alpha),
\end{align*}
where $B(s)$ is defined by \eqref{B(s)}.
Therefore, 
\begin{align}\label{Countour integral}
I_W=\frac{H}{\pi i}\int_{(-1/4)}H^s \zeta(1-s)B(s) \zeta^{\beta^2}(k+ks+2\alpha) \widehat{w}\left(\frac{s}{2\pi i}\right)ds.
\end{align}
\textbf{Step 1 {\it $($Residue estimate$)$}}
 The integrand of $I_W$ exhibits a pole of order $\beta^2$ at $s=\frac{1-2\alpha-k}{k}$, while $B(s)$ converges absolutely for $\Re(s)>\frac{1-4\alpha-2k}{2k}$. To simplify the analysis, we will shift the contour to the line $\Re(s)=\frac{3-6\alpha-4k}{4k}$ and use the decay of $\widehat{w}$. By referring to equation \eqref{G(s)}, and employing path deformation, we can express $I_W$ as follows:
\begin{align*}
I_W=2H\, \mathrm{Res}_{s=\frac{1-2\alpha-k}{k}}H^s \zeta(1-s)B(s) \zeta^{\beta^2}(k+ks+2\alpha)  \widehat{w}\left(\frac{s}{2\pi i}\right)+E,
\end{align*}
where 
\begin{align*}
E:=\frac{H}{\pi i}\int_{\big(\frac{3-6\alpha- 4k}{4k}\big)}H^s \zeta(1-s)B(s) \zeta^{\beta^2}(k+ks+2\alpha) \widehat{w}\left(\frac{s}{2\pi i}\right)ds. 
\end{align*}
For any real $t$ and $k\geq 2$, both $\zeta{\Big(\frac{8k+6\alpha-3}{4k} -it\Big)}$ and $B\Big(\frac{3-6\alpha-4k}{4k}+it\Big)$ exhibit convergence. Additionally, when $\sigma\geq \frac{1}{2}$ and $|t|\geq 2$, we have the following standard convexity bound using  $\zeta(1/2+it)\ll |t|^{1/6}\log^2 |t|$ (see \cite[eq. (8.22)]{IK}) and Phragm\'en--Lindel\"{o}f principle:
\[
\zeta(\sigma+ it)\ll \Big(|t|^{\frac{1-\sigma}{3}} +1\Big)\log{|t|}.
\]

Given that $k\geq 2$ and $\alpha<\frac12$, it follows that $\frac{4k+6\alpha-3}{8\pi k} < \frac{1}{2\pi}$. Consequently, we can deduce 
\[
\widehat{w}\left(\frac{t}{2\pi}+ i\frac{4k+6\alpha-3}{8\pi k}\right) \ll \frac{H^{\frac{(\lceil\beta^2/12\rceil+2)\epsilon}{2k}}}{(|t|+1)^{\lceil\beta^2/12\rceil+2}}. 
\]
 Thus, we have $E\ll H^{\frac{3-6\alpha}{4k}+\frac{(\lceil\beta^2/12\rceil+2)\epsilon}{2k}},$  which is  $O\left(H^{\frac{1-2\alpha}{k}-\frac{\epsilon}{k}}\right)$ when $\epsilon<\frac{1-2\alpha}{2(\lceil\beta^2/12\rceil+4)}$.
 Finally, 
 \[
 I_{W, z}=2H \,\mathrm{Res}_{s=\frac{1-2\alpha-k}{k}}H^s B(s) G_k(s)\left(s-\frac{1-2\alpha-k}{k}\right)^{-\beta^2} \widehat{w}\left(\frac{s}{2\pi i}\right)+O\left(H^{\frac{1-2\alpha}{k}-\theta_{k, \epsilon}}\right).
 \]
 \textbf{Step 2 {\it $($Estimating $S(y)$ through $W(y))$}}
Note that $S(y)$ is a smooth function but doesn't have fast decay. To overcome this limitation, we will approximate $S(y)$ using a suitable smooth function $W(y)$ obtained from the expression given in \eqref{bounds of smoth functions}. Let $v$ be a smooth bump function satisfying the conditions $v(x)=1$ when $|x|\leq 1$, and $v(x)=0$ when $|x|\geq 2$. Consider 
\[
W(y)=S(y)v\left(\frac{y}{H^{\epsilon'}}\right).
\]

Observing that $W(y)$ satisfies hypothesis \eqref{bounds of smoth functions}, and  employing the bound $h(d)\ll \frac{1}{d^{\alpha-\epsilon_1}}$, 
\begin{align}\label{compare}
&I_{S, z}-I_{W, z}=2H^2\sum_{d_1^k, d_2^k\leq z}\frac{h(d_1)h(d_2)}{d_1^kd_2^k}\sum_{\lambda\geq 1}\left(S\left(\frac{H\lambda}{(d_1^k, d_2^k)}\right)^2- W\left(\frac{H\lambda}{(d_1^k, d_2^k)}\right)^2\right)\\
&\ll  \sum_{d_1^k, d_2^k}\frac{|h(d_1)h(d_2)|}{d_1^{k}d_2^{k}}\sum_{\lambda\geq 1}\frac{(d_1, d_2)^{2k}}{\lambda^2}\mathds{1}\left(\frac{H\lambda}{(d_1^k, d_2^k)}\geq H^{\epsilon'}\right) \nonumber\\
& \ll \sum_{(d_1, d_2)\leq H^{\frac{1-\epsilon'}{k}}}\frac{(d_1, d_2)^{2k}}{(d_1d_2)^{k+\alpha-2\epsilon_1}}+ H^{1-\epsilon'}\sum_{(d_1, d_2)> H^{\frac{1- \epsilon'}{k}}}\frac{(d_1, d_2)^{k}}{(d_1d_2)^{k+\alpha-\epsilon_1}}\nonumber \ll H^{\frac{(1-\epsilon')(1-2\alpha+2\epsilon_1)}{k}}. 
\end{align}

%
and the last inequality follows from arguments similar to those above used to bound $\mathcal{E}_{W, z}$.

For every $0\leq j\leq \beta^2-1$, upon differentiating \eqref{hat w}, we obtain
\[
 \widehat{w}^{(j)}\left(\frac{1-2\alpha-k}{2\pi ik}\right)= \int_0^\infty W(y)^2 y^{\frac{k+2\alpha-1}{k}}(\log y)^j dy.
\]
The choice of $W$ yields
\begin{align*}
\int_0^\infty (W(y)^2-S(y)^2)y^{\frac{k+2\alpha-1}{k}}(\log y)^j dy\ll \int_{H^{\epsilon'}}^\infty y^{-\frac{k+1-2\alpha}{k}}(\log y)^jdy\ll H^{-\frac{(1-2\alpha)\epsilon'}{k}}(\log{H})^j.
\end{align*}
From Lemma \ref{nice int}, we deduce that 
 \begin{align}\label{approx of hat w}
 \widehat{w}^{(j)}\left(\frac{1-2\alpha-k}{2\pi ik}\right)= (-1)^{j+1} \frac{\chi^{(j)}(\frac{k+1-2\alpha}{k})}{4\pi^2} + O\left(H^{-\frac{(1-2\alpha)\epsilon'}{k}}(\log{H})^j\right).
\end{align}
Based on these estimates, 
\begin{align*}
I_{S, z}=2H \,\mathrm{Res}_{s=\frac{1-2\alpha-k}{k}}H^s B(s) G_k(s) \left(s-\tfrac{1-2\alpha-k}{k}\right)^{-\beta^2}\widehat{w}\left(\frac{s}{2\pi i}\right)+O\left(H^{\frac{1-2\alpha}{k}-\theta_{k, \epsilon}}+H^{\frac{(1-\epsilon')(1-2\alpha+\epsilon_1)}{k}}\right).
\end{align*}
By selecting $\epsilon'=\frac{\epsilon+\epsilon_1}{1-2\alpha+\epsilon_1}$, tougher with an appropriately small value of $\epsilon_1$, we conclude that 
\[
I_{S, z}= c_{h, k}P_{\beta^2-1}(\log H)\, H^{\frac{1-2\alpha}{k}}+O\left(H^{\frac{1-2\alpha}{k}-\theta_{k, \epsilon}}\right).
\]

\noindent
{\it Case 2.}
	Assume that $h$ belongs to $\mathcal{G}_{\alpha, \beta}$. We can derive the following expression:
\[
\sum_{d_1, d_2\geq 1}\frac{h(d_1)h(d_2)}{d_1^kd_2^k}\frac{(d_1, d_2)^k}{(d_1, d_2)^{ks}}=D(s) \zeta^{\beta^2}(k+ks+2\alpha),
\]
where $D(s)$ is given by \eqref{D(s)}.
Thus,
\begin{align*}
I_W=\frac{H}{i\pi}\int_{(-1/4)}H^s\zeta(1-s) D(s)\zeta^{\beta^2}(k+ks+2\alpha)\widehat{w}\left(\frac{s}{2\pi i}\right)ds.
\end{align*}
Note that the integrand in $I_W$ exhibits a pole at $s=\frac{1-2\alpha-k}{k}$ with an order of $\beta^2$. Additionally, $D(s)$ is absolutely convergent for $\Re(s)>\frac{1-4\alpha-2k}{2k}$. Consequently, we are able to shift the line of integration to $s=\frac{3-6\alpha-4k}{4k}$, and by considering Case 1, we conclude the proof. 
\end{proof}

\end{lemma}

\begin{lemma}\label{complex case main lemma}
Assume that $\beta\in \mathbb{C}\setminus \mathbb{Z}$ and $h\in \mathcal{M}_{\alpha, \beta}\cup \mathcal{G}_{\alpha, \beta}$ with $0\leq \alpha<\frac12$.
Let $S(x)$ and $I_{S, z}$ be as in Lemma \ref{lemma of main term}. 
Then for $z\geq H\exp(\sqrt{\log H})$, 
\begin{align*}
I_{S, z}=
2H^{\frac{1-2\alpha}{k}}(\log H)^{|\beta|^2-1}\bigg\{\sum_{0\leq j\leq N}\frac{k^{j-|\beta|^2}\lambda_{\alpha, j}(|\beta|^2)}{\log^j H}+O\left(\frac{1}{\log^{N+1} H}\right)\bigg\}, 
\end{align*}
where $\lambda_{\alpha, j}(|\beta|^2)$ is defined by \eqref{definition of lambda}.
\end{lemma}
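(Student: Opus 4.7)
The plan is to mirror the proof of Lemma \ref{lemma of main term} verbatim through the smoothing step and the contour-integral reformulation, then to replace the residue extraction (which crucially used $\beta^2 \in \mathbb{Z}$) by a Hankel-contour analysis around a branch-point singularity. I would first approximate $S$ by the smooth bump $W(y) = S(y) v(y/H^{\epsilon'})$ and control the smoothing error by \eqref{compare}, then apply Fourier inversion exactly as in the integer case to reach
\begin{equation*}
I_W = \frac{H}{\pi i}\int_{(-1/4)} H^s \Phi(s)\, \zeta(1-s)\, \zeta^{\beta^2}(k+ks+2\alpha)\, \widehat{w}\!\left(\tfrac{s}{2\pi i}\right) ds,
\end{equation*}
with $\Phi(s) = B(s)$ or $D(s)$ according to whether $h \in \mathcal{M}_\alpha^\mu$ or $h \in \mathcal{G}_\alpha$.

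At this juncture the proof departs from Lemma \ref{lemma of main term}: for non-integer $\beta^2$ the point $s_0 := (1-2\alpha-k)/k$ is a branch point, not a pole, of the integrand. I would factor
\begin{equation*}
\zeta^{\beta^2}(k+ks+2\alpha) = k^{-\beta^2}(s-s_0)^{-\beta^2}\bigl[(k+ks+2\alpha-1)\zeta(k+ks+2\alpha)\bigr]^{\beta^2},
\end{equation*}
where the bracket is analytic near $s_0$ and equals $1$ at $s=s_0$, and fix the branch cut of $(s-s_0)^{-\beta^2}$ along $(-\infty, s_0]$. Deforming the vertical line $\Re(s) = -1/4$ into a Hankel contour $\mathcal{H}$ that wraps around $s_0$ and closes off on the far-left line $\Re(s) = (3-6\alpha-4k)/(4k)$, the contribution of the remote vertical portion is bounded by the same convexity estimate for $\zeta(1-s)$ and decay \eqref{bound of hatw} of $\widehat{w}$ used to estimate the error term $E$ in Lemma \ref{lemma of main term}, and thus is of size $O(H^{(1-2\alpha)/k-\epsilon/k})$.

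Substituting $s = s_0 + u/(k\log H)$, the contour becomes a standard Hankel loop $\mathcal{H}_0$ around $0$ in the $u$-plane, and $H^{s-s_0} = e^{u/k}$. Taylor-expanding the analytic factor
\begin{equation*}
\Psi(s) := \Phi(s)\,\zeta(1-s)\bigl[(k+ks+2\alpha-1)\zeta(k+ks+2\alpha)\bigr]^{\beta^2}\,\widehat{w}\!\left(\tfrac{s}{2\pi i}\right)
\end{equation*}
at $s=s_0$ to order $N$, each term $(s-s_0)^j$ produces a Hankel integral which, after the rescaling $v = u/k$, evaluates to $k^{j-\beta^2+1}/\Gamma(\beta^2-j)$ by the classical Hankel representation of $1/\Gamma$. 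Combining with the prefactors $H$, $H^{s_0} = H^{(1-2\alpha)/k-1}$, $k^{-\beta^2}$, and $(\log H)^{\beta^2-1-j}$ yields the claimed expansion, provided the $j$-th Taylor coefficient of $\Psi$ matches $\lambda_{\alpha,j}(\beta^2)$ of \eqref{definition of lambda}. The latter is a Cauchy-product identity: under the change of variable $z = k(s+1)+2\alpha-1$ so that $z=1 \leftrightarrow s=s_0$, the prefactor $\Phi(s)\zeta(1-s)\widehat{w}(s/(2\pi i))$ is identified with $L_\alpha(z)/z$ via \eqref{approx of hat w} applied to $\widehat{w}$ and its derivatives at $(1-2\alpha-k)/(2\pi i k)$, while $[(k+ks+2\alpha-1)\zeta(k+ks+2\alpha)]^{\beta^2}$ expands as the Taylor series \eqref{definition of gamma}. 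The Cauchy product, weighted by $1/\Gamma(\beta^2-j)$, reproduces $\lambda_{\alpha,j}(\beta^2)$ exactly; the outer factor of $2$ in the target formula is the same factor of $2$ present in the definition $I_{S,z} = 2H^2\sum\cdots$.

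The main technical obstacle will be this last combinatorial matching step: one must carefully track the normalizations introduced by the change of variables $z = k(s+1)+2\alpha-1$, the conversion between $\widehat{w}^{(m)}$ and $\chi^{(m)}$ via \eqref{approx of hat w}, and the resulting product of Taylor series, in order to identify the coefficient with $\lambda_{\alpha,j}(\beta^2)$ in precisely the stated form. All other ingredients (smoothing, Fourier-Mellin inversion, convexity bounds on the remote contour, and the Hankel integral evaluation itself) are mechanical adaptations of Lemma \ref{lemma of main term} that do not depend on $\beta^2$ being integral.
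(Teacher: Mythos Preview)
Your contour deformation has a genuine gap. When $\beta^2\notin\mathbb{Z}$, the factor $\zeta^{\beta^2}(k+ks+2\alpha)$ acquires branch-point singularities not only at $s_0=(1-2\alpha-k)/k$ (where $k+ks+2\alpha=1$) but also at every point $s$ with $k+ks+2\alpha=\rho$ for $\rho$ a nontrivial zero of $\zeta$. Your far-left line $\Re(s)=(3-6\alpha-4k)/(4k)$ corresponds to $\Re(k+ks+2\alpha)=(3+2\alpha)/4<1$, and without an (unproven) zero-free strip for $\zeta$ there may well be zeros $\rho$ with $\Re(\rho)\in((3+2\alpha)/4,1)$; these produce branch points lying strictly between $s_0$ and your far-left line, obstructing the shift. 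In Lemma~\ref{lemma of main term} this issue does not arise because for integer $\beta^2$ the function $\zeta^{\beta^2}$ is single-valued and the zeta zeros are harmless zeros of the integrand. As a side effect, your claimed power-saving error $O(H^{(1-2\alpha)/k-\epsilon/k})$ is too strong; note that the lemma only asserts $O((\log H)^{-(N+1)})$ relative to the main term.

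The paper proceeds instead via the Selberg--Delange method. After the substitution $z=k+ks+2\alpha$ the integral becomes $\frac{H^{-2\alpha/k}}{k\pi i}\int L(z)\zeta^{\beta^2}(z)H^{z/k}\frac{dz}{z}$ with $L$ analytic near $z=1$; one then pushes the contour only into the classical zero-free region $\Re(z)\geq 1-c_0/\log T$, truncated at height $T=H^{1/\sqrt{\log H}}$, and invokes Tenenbaum's Theorem~II.5.2 for the Hankel expansion around $z=1$. This gives the stated coefficients $\lambda_{\alpha,j}(\beta^2)$ and the remainder of size $O\big(H^{(1-2\alpha)/k}e^{-c\sqrt{\log H}}\big)$, which is absorbed into the $O((\log H)^{-(N+1)})$ term. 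Your Hankel evaluation and coefficient-matching are essentially the right computations, but they must be carried out inside the zero-free region, not out to a fixed abscissa to the left of $s_0$.
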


\begin{proof}
Let us assume that $h\in \mathcal{M}_{\alpha, \beta}$. Following an argument of the proof of Lemma \ref{lemma of main term} and using the change of variable $z=k+ks+2\alpha$, the integral \eqref{Countour integral} can be expressed as follows:
\[
I_{W}= \frac{H^{-2\alpha/k}}{k\pi i}\int_{(3k/4+2\alpha)}L(z)\zeta^{|\beta|^2}(z)H^{\frac{z}{k}}\frac{dz}{z},
\]
where 
\[
L(z)=z\, \zeta\left(2-\frac{z-2\alpha}{k}\right)B\left(-1+\frac{z-2\alpha}{k}\right)\widehat{w}\left(\frac{-1+(z-2\alpha)/k}{2\pi i}\right).
\]
Set $F(z)=L(z)\zeta^{|\beta|^2}(z)$, $c=1+\frac{1}{\log H}$ and $T\geq 1$ is a parameter to be determined later. The integrand of $I_W$ is holomorphic within the strip $\Big\{z\in \mathbb{C}:  c\leq \Re(z)\leq \frac{3k}{4}+2\alpha\Big\}$, allowing for a smooth deformation of the line integral from $(3k/4+2\alpha)$ to $(c)$.\\
\begin{figure}[hbt!] 
	\includegraphics[scale=.8]{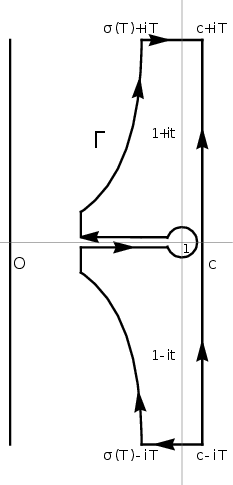}
	\caption{Contour $\Gamma$}\label{contour}
\end{figure} 
The tail parts $|\Im(z)|>T$ of $(c)$ are dominated by
\[
\ll H^{-\frac{2\alpha}{k}}\int_{c+iT}^{c+i \infty}\bigg|L(z)\zeta^{|\beta|^2}(z)H^{\frac{z}{k}}\frac{dz}{z}\bigg|\ll H^{\frac{1-2\alpha}{k}+\frac{2\epsilon}{k}}T^{-2},
\]
where we use the bound of $\widehat{w}$ from \eqref{bound of hatw}. Now, consider the contour $\Gamma$ depicted in Figure \ref{contour}, where $\sigma(t)$ is defined as $1-\frac{c_0}{\log T}$ at $t=T$ for a constant $c_0>\frac{3}{2k}$.
The contribution along the horizontal lines $[\sigma(T)\pm iT, c\pm iT]$ is bounded by
\[
\ll H^{-\frac{2\alpha}{k}+\frac{2\epsilon}{k}}T^{-3}\int_{\sigma(T)}^{c}H^{\frac{\sigma}{k}}d\sigma\ll H^{\frac{1-2\alpha}{k}+\frac{2\epsilon}{k}}T^{-3}.
\]
Further, using the standard upper bound on $\zeta(1+it)\ll \log(2+|t|)$, the contribution along the arcs $\{\sigma(t)\pm it:\, 0< t\leq T\}$ is bounded by
\[
\ll H^{\frac{\sigma(T)}{k}-\frac{2\alpha}{k}+\frac{3\epsilon}{2k}}\int_0^T \frac{(\log (2+|t|))^{|\beta|^2}}{1+|t|^3}dt\ll H^{\frac{\sigma(T)}{k}-\frac{2\alpha}{k}+\frac{3\epsilon}{2k}}.
\]
By choosing $\epsilon=\frac{1}{\sqrt{\log H}}$ and $T=H^{\epsilon}$,  the above estimates are bounded above by $O(H^{\frac{1-2\alpha}{k}}e^{-c_1\sqrt{\log H}})$ for some constant $c_1>0$.
Using the approximation of $\hat{w}$ in \eqref{approx of hat w}, along with Lemma~\ref{lemma of main term} and following the proof of Theorem $5.2$ (II) from \cite{Tenen}, we conclude that
\[
I_{W}=\frac{2}{k^{|\beta|^2}}H^{\frac{1-2\alpha}{k}}(\log H)^{|\beta|^2-1}\bigg\{\sum_{0\leq j\leq N}\frac{k^j\lambda_{\alpha, j}(|\beta|^2)}{\log^j H}+O\left(\frac{1}{\log^{N+1} H}\right)\bigg\}.
\]

The variance for class $\mathcal{G}_{\alpha, \beta}$ can be evaluated in a similar manner as $\mathcal{M}_{\alpha, \beta}$.
\end{proof}

\begin{lemma}\label{newlem1}
	Let $0\leq \alpha<2$ and $h\in \mathcal{M}_{\alpha, \beta}\cup \mathcal{G}_{\alpha, \beta}$. Recall $\mathcal{J}_{h,z}(X; H)$ from \eqref{J_k}. Then, uniformly for $z\leq X$, we have
	\begin{align*}
	\mathcal{J}_{h,z}(X; H)= (1+O(H^{-\epsilon_2}))2H^2 \sum_{d_1^k, d_2^k\leq z}\frac{h(d_1)h(d_2)}{d_1^kd_2^k}\sum_{\lambda\geq 1}S\left(\frac{H\lambda}{(d_1^k, d_2^k)}\right)^2 +O\left(\mathcal{E}_{h,z}(H)\right)+O(X^{-1+\epsilon}),
	\end{align*}
	where  
	\begin{align*}
	\mathcal{E}_{h, z}(H) := \sum_{d_1^k, d_2^k \leq z}\sum_{0<|n_1|, |n_2|\leq N}\frac{|h(d_1)h(d_2)|}{n_1n_2}\left|1-e\big(\tfrac{n_1 H}{d_1^k}\big)\right|\left|1-e\big(-\tfrac{n_2 H}{d_2^k}\big)\right|\mathbbm{1}_{0< \big|\frac{n_1}{d_1^k} - \frac{n_2}{d_2^k}\big|\leq \frac{BH^{\epsilon_2}}{X}}
	\end{align*}
	for some constant $B>0$, a sufficiently small $\epsilon$, $\epsilon_2>0$, and $N=X^4$.
\end{lemma}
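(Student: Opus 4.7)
The plan is to rewrite the deviation inside the integral using the sawtooth function $\psi(t) = t - \lfloor t\rfloor - \tfrac12$. Since
\[
\#\{n : x/d^k < n \leq (x+H)/d^k\} = H/d^k + \psi(x/d^k) - \psi((x+H)/d^k),
\]
one finds that the integrand of $\mathcal{J}_{k,z}(H;h)$ equals $R(x)^2$ where
\[
R(x) := \sum_{d^k \leq z} h(d)\bigl(\psi(x/d^k) - \psi((x+H)/d^k)\bigr).
\]
Next, I would apply the truncated Fourier expansion $\psi(t) = -\sum_{0 < |n| \leq N} e(nt)/(2\pi i n) + O(\min(1, 1/(N\|t\|)))$ with $e(u) := e^{2\pi i u}$ and with truncation parameter $N := B X H^{-\epsilon_2}$, for a constant $B > 0$ to be chosen. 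Using $1 - e(nH/d^k) = -2i\sin(\pi nH/d^k)\,e(nH/(2d^k))$ and the definition $S(y) = \sin(\pi y)/(\pi y)$, this rewrites the difference as
\[
\psi(x/d^k) - \psi((x+H)/d^k) = \frac{H}{d^k}\sum_{0 < |n| \leq N} S(nH/d^k)\,e\!\left(\tfrac{n(x + H/2)}{d^k}\right) + \mathcal{R}_d(x),
\]
where the Erdős--Turán tail $\mathcal{R}_d(x)$ satisfies $\int_X^{2X}\mathcal{R}_d(x)^2\,dx \ll X (\log N)/N$.

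After squaring $R(x)$ and integrating, the cross terms between the main Fourier part and the $\mathcal{R}_d$ remainders are handled by Cauchy--Schwarz; using $|h(d)| \ll d^{\epsilon - \alpha}$ and $z \ll X$, these absorb into the multiplicative factor $1 + O(H^{-\epsilon_2})$. The double Fourier sum that remains is organised by the orthogonality identity
\[
\frac{1}{X}\int_X^{2X} e\!\left(x\!\left(\tfrac{n_1}{d_1^k} - \tfrac{n_2}{d_2^k}\right)\right) dx \ll \min\!\left(1, \tfrac{1}{X|n_1/d_1^k - n_2/d_2^k|}\right).
\]
Diagonal pairs $n_1/d_1^k = n_2/d_2^k$ are parametrised by $n_i = \lambda d_i^k/(d_1^k, d_2^k)$ for $\lambda \in \mathbb{Z}\setminus\{0\}$; for these the phase $e(n_1 H/(2d_1^k) - n_2 H/(2d_2^k))$ collapses to $1$ and $n_i H/d_i^k = \lambda H/(d_1^k, d_2^k)$. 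Folding $\lambda \leftrightarrow -\lambda$ via the evenness of $S^2$ then produces the stated main term
\[
2H^2\sum_{d_1^k, d_2^k \leq z}\frac{h(d_1)h(d_2)}{d_1^k d_2^k}\sum_{\lambda\geq 1}S\!\left(\tfrac{H\lambda}{(d_1^k, d_2^k)}\right)^2,
\]
up to the tail $|\lambda| > N(d_1^k, d_2^k)/d_i^k$, which by the decay $S(y) \ll \min(1, y^{-2})$ (and the convergence condition $k + 2\alpha > 1$) contributes at most $O(H^{-\epsilon_2})$ relative to the main term.

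For the off-diagonal terms, the bound $|S(n_i H/d_i^k)| \leq d_i^k/(\pi n_i H)$ gives
\[
H^2 \cdot \tfrac{|h(d_1)h(d_2)|}{d_1^k d_2^k}\cdot |S(n_1 H/d_1^k) S(n_2 H/d_2^k)| \ll \tfrac{|h(d_1)h(d_2)|}{n_1 n_2}.
\]
Pairs with $|n_1/d_1^k - n_2/d_2^k| > BH^{\epsilon_2}/X$ then save the factor $1/(X|\cdot|) < H^{-\epsilon_2}/B$ from the orthogonality integral; their total contribution is bounded by $H^{-\epsilon_2}(\log N)^2 \bigl(\sum_{d^k \leq z} |h(d)|\bigr)^2$, which again absorbs into the $1 + O(H^{-\epsilon_2})$ multiplicative factor on the main term after checking that this quantity is small compared to $H^{(1-2\alpha)/k}$ in the claimed ranges of $z$. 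The remaining pairs with $0 < |n_1/d_1^k - n_2/d_2^k| \leq BH^{\epsilon_2}/X$ assemble exactly into $O(\mathcal{E}_{k,h}(H))$, completing the decomposition.

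The main technical obstacle is the simultaneous calibration of the truncation parameter $N$ (equivalently the choice of $\epsilon_2$ and $B$) so that four distinct error contributions --- the Erdős--Turán truncation of $\psi$, its cross-terms with the main Fourier sum, the $|\lambda|$-tail of the diagonal sinc-squared sum, and the well-separated off-diagonal frequencies --- are all subsumed into the single relative error $H^{-\epsilon_2}$, using only the pointwise weight bound $|h(d)| \ll d^{\epsilon - \alpha}$ and the mild constraint $z \leq 2X$. The book-keeping rests on the two decay regimes of $S$, namely $|S(y)| \leq 1$ for small $y$ and $|S(y)| \leq 1/(\pi|y|)$ (with $S(y)^2 \leq 1/(\pi|y|)^2$) for large $y$, used respectively in the diagonal tail and the off-diagonal estimates.
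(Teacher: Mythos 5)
Your overall strategy — expand the sawtooth $\psi$ in its Fourier series, square, and sort the resulting $(n_1,n_2,d_1,d_2)$ quadruples into diagonal ($n_1 d_2^k=n_2 d_1^k$) and off-diagonal — mirrors the paper's, but you diverge at the single step that actually makes the lemma true, and the proposal has a genuine gap there.

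The paper does not integrate against the sharp cutoff $\mathbbm{1}_{[X,2X]}$. It introduces a weight $\sigma(x/X)$ whose Fourier transform $\widehat{\sigma}$ is \emph{compactly supported} in $[-BH^{\epsilon_2},BH^{\epsilon_2}]$. After expanding the square, the off-diagonal pairs pick up the factor $\widehat{\sigma}\left(X\left(\tfrac{n_2}{d_2^k}-\tfrac{n_1}{d_1^k}\right)\right)$, which is \emph{identically zero} whenever $\left|\tfrac{n_1}{d_1^k}-\tfrac{n_2}{d_2^k}\right|>BH^{\epsilon_2}/X$. That is precisely why the off-diagonal contribution collapses to $\mathcal{E}_{k,h}(H)$ with no residual "well-separated" error to estimate. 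The passage from the smoothed integral back to $\mathcal{J}_{k,z}$ is then the Montgomery/Beurling--Selberg device (majorant $\sigma_+$ and minorant $\sigma_-$ of $\mathbbm{1}_{[1,2]}$ with band-limited Fourier transforms and $\left|\int(\sigma_\pm-\mathbbm{1}_{[1,2]})\right|\ll H^{-\epsilon_2}$), which is exactly where the multiplicative factor $(1+O(H^{-\epsilon_2}))$ comes from.

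You instead keep the sharp cutoff and bound the orthogonality integral by $\min\left(1,\tfrac{1}{X\left|n_1/d_1^k-n_2/d_2^k\right|}\right)$. This decays like $1/|\xi|$, not compactly supported, so the well-separated off-diagonal does not vanish; you estimate it crudely as $H^{-\epsilon_2}(\log N)^2\left(\sum_{d^k\leq z}|h(d)|\right)^2$, and then say this "absorbs into the $1+O(H^{-\epsilon_2})$ multiplicative factor ... after checking that this quantity is small compared to $H^{(1-2\alpha)/k}$ in the claimed ranges of $z$." That check fails, and in fact cannot be performed at the level of this lemma because \textbf{the lemma imposes no upper bound on $z$}. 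Concretely, $\sum_{d^k\leq z}|h(d)|\asymp z^{(1-\alpha+o(1))/k}$, so your bound is $\asymp H^{-\epsilon_2}z^{2(1-\alpha)/k+o(1)}$; already for $k=2$, $\alpha=0$, $z\asymp X^{8/11}$ and $H\asymp X^{6/11}$ (which is inside the ranges Propositions~\ref{below z-barrier} and~\ref{19.04.22.6:26} need), this is $\asymp X^{8/11}H^{-\epsilon_2}$, dwarfing the main term $H^{1/2}\asymp X^{3/11}$. So the proposed estimate cannot be subsumed into the claimed error structure, and your version would at best prove a weaker lemma with an added hypothesis $z\ll H^{(1-2\alpha)/(2(1-\alpha))-\epsilon}$, which is far too restrictive for the applications. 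Two secondary points: the paper takes $N=X^4$, making the $\psi$-truncation error trivially $O(X^{-1})$ rather than tied to the separation scale $N=BXH^{-\epsilon_2}$ as in your version (that part can be patched, but it is extra work the paper avoids); and the diagonal $\lambda$-tail bookkeeping you outline is fine in principle but again becomes moot once $\widehat{\sigma}$ is band-limited, since only $|\lambda|$ with $\lambda d_i^k/(d_1^k,d_2^k)\leq$ the relevant frequency cutoff survive and their complement is easily controlled by the decay $S(y)^2\ll y^{-2}$ exactly as the paper's choice of $N$ handles it. In short: the missing idea is the band-limited weight; with a sharp cutoff the off-diagonal simply does not close for general $z$.
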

\begin{proof}
Consider an integrable function $\sigma: \mathbb{R}\to \mathbb{R}$ with the property that its Fourier transform $\widehat{\sigma}$ is supported on the interval $[-BH^{\epsilon_2}, BH^{\epsilon_2}]$. Our initial goal is to demonstrate  
	\begin{small}
	\begin{align}\label{smooth version}
	\frac{1}{X}\int_{-\infty}^\infty \sigma\left(\frac{x}{X}\right)\Bigg|\sum_{\substack{x<nd^k\leq x+H\\ d^k\leq z}}h(d)-H\sum_{d^k\leq z}\frac{h(d)}{d^k}\Bigg|^2 dx\\
	=2H^2 \sum_{d_1^k, d_2^k\leq z}\frac{h(d_1)h(d_2)}{d_1^kd_2^k}\sum_{\lambda\geq 1}S\left(\frac{H\lambda}{(d_1^k, d_2^k)}\right)^2 + O(\mathcal{E}_{k,h}(H))+O(X^{-1+\epsilon}).\nonumber
	\end{align}
\end{small}	
	Observe that
	\begin{align*}
	\sum_{x/d^k< n\leq (x+H)/d^k}1= \frac{H}{d^k}+\psi\left(x/d^k\right)-\psi\left((x+H)/d^k\right),
	\end{align*}
	where $\psi(y)=\{y\}-\frac{1}{2}$ with $\{y\}$ the fractional part of $y$. The Fourier expansion of $\psi$ is (see \cite[eq.  (4.18)]{IK})
	\begin{align*}
	\psi(y)=-\frac{1}{2\pi i} \sum_{0<|n|\leq N}\frac{e(yn)}{n}+O\left(\min\bigg\{1, \frac{1}{N\|y\|}\bigg\}\right).
	\end{align*} 
	So, combining the above estimates, we have
	\begin{align*}
	\sum_{\substack{x<nd^k\leq x+H\\ d^k\leq z}}h(d)-H\sum_{d^k\leq z}\frac{h(d)}{d^k}=-\frac{1}{2\pi i}\sum_{d^k \leq z}h(d)\sum_{0<|n|\leq N}\frac{1}{n}e\left(\frac{nx}{d^k}\right)\left(1-e\left(\frac{nH}{d^k}\right)\right)\\
	+ O\bigg(\sum_{d^k\leq z}h(d)\left(\min\{1, 1/(N\|x/d^k\|)\}+\min\{1, 1/(N\|(x+H)/d^k\|)\}\right)\bigg).
	\end{align*}
Since \( z \leq X \), by choosing \( N = X^4 \), the minimum values inside the error term above can be bounded by \( O(X^{-2}) \), unless 
\[
\left| \frac{x}{d^k} \right| < X^{-2} \quad \text{or} \quad \left| \frac{x+H}{d^k} \right| < X^{-2}.
\]
However, both of these inequalities lead to contradictions and are therefore impossible.

	Therefore, the contribution of the error term to the integral \eqref{smooth version} is bounded above by
	\[
	\ll \frac{X^{2(1-\alpha)/k+2\epsilon_1}\log X}{X^2},
	\qquad \text{if } k\geq 2,
	\]
	and by
	\[
	\ll \frac{X^{2(1-\alpha)+2\epsilon_1}}{X^2},
	\qquad \text{if } k=1.
	\]
	Hence, in both cases,
	\[
	\ll X^{-1+\epsilon},
	\]
	since \(k+\alpha>3/2\) for a suitable choice of \(\epsilon>0\).

  Referring to \eqref{smooth version}, we can simplify our analysis to focus on the following expression, accompanied by a negligible error.
	\begin{align}\label{term after inverse fourier}
	\frac{1}{4\pi^2}\sum_{\substack{d_1^k, d_2^k \leq z\\ 0<|n_1|, |n_2|\leq N }}\frac{h(d_1)h(d_2)}{n_1n_2}\left(1-e\left(\frac{n_1 H}{d_1^k}\right)\right)\left(1-e\left(-\frac{n_2 H}{d_2^k}\right)\right) \widehat{\sigma}\left(X\left(\frac{n_2}{d_2^k}-\frac{n_1}{d_1^k}\right)\right). 
	\end{align}
	We analyze two separate cases: first, those $(n_1, n_2, d_1, d_2)$ where $n_1d_2^k - n_2d_1^k = 0$, and second, those where this equation does not hold.
	
	When $n_1d_2^k - n_2d_1^k \neq 0$, the aforementioned expression is bounded above by $\mathcal{E}_{h, z}(H)$. In the former scenario, we may express $n_1$ and $n_2$ as $n_1=\frac{\lambda d_1^k}{(d_1^k, d_2^k)}$ and $n_2=\frac{\lambda d_2^k}{(d_1^k, d_2^k)}$, where $\lambda\in \mathbb{Z}\backslash \{0\}$. Using the identity $|1-e(x)|=2|\sin (\pi x)|$, we obtain the main contribution in  \eqref{term after inverse fourier} as
	\begin{align*}
	\frac{\widehat{\sigma}(0)}{\pi^2}\sum_{d_1^k, d_2^k \leq z}\frac{h(d_1)h(d_2)}{d_1^k d_2^k}(d_1, d_2)^{2k}\sum_{\lambda\neq 0}\frac{1}{\lambda^2}\sin^2\left(\frac{\lambda \pi H}{(d_1^k, d_2^k)}\right).
	\end{align*}
	
	In order to establish a connection between $\mathcal{J}_{h, z}(X; H)$ and the integral \eqref{smooth version}, we employ the argument presented in \cite[page 273]{Montgomery}, which is also utilized in \cite{GMRR}. Following this approach, we can obtain a pair of smooth, integrable functions $\sigma{-}$ and $\sigma_{+}$ whose Fourier transforms $\widehat{\sigma}{-}$ and $\widehat{\sigma}{+}$ have a support interval of $[-BH^{\epsilon_2}, BH^{\epsilon_2}]$. These functions satisfy 
	\begin{align*}
	\sigma_{-} \leq \mathbbm{1}_{[1,\, 2]} \leq \sigma_{+}\, \mbox{ and }\, \Big|\int \Big(\sigma_{\pm}(x)-\mathbbm{1}_{[1,\, 2]}(x)\Big)dx\Big| \ll H^{-\epsilon_2}.
	\end{align*}  
	
	\noindent
	Thus, we conclude that 
	\begin{align*}
	\mathcal{J}_{h, z}(X; H)=(1+O(H^{-\epsilon_2}))\frac{1}{X}\int_{-\infty}^\infty \sigma_{\pm}\left(\frac{x}{X}\right)\Bigg|\sum_{\substack{x<nd^k\leq x+H\\ d^k\leq z}}h(d)-H\sum_{d^k\leq z}\frac{h(d)}{d^k}\Bigg|^2 dx.
	\end{align*}
	\end{proof}

\subsection{Density of rational points on binary forms}\label{6.2}
In this section, we will discuss the important lemmas related to the density of rational points on binary forms. These lemmas play a crucial role in bounding the non-diagonal terms mentioned in Lemma \ref{newlem1}. Let $F(x, y)$ be a binary form with integer coefficients, non-zero discriminant, and degree $d\geq 2$.  For a positive number $Z$, let
\[
\mathcal{N}_F(Z):=\{(x, y)\in \mathbb{Z}^2 : \, 0< |F(x, y)|\leq Z\}
\]
and 
\[
A_F= \mu\{(x, y)\in \mathbb{R}^2: \, |F(x, y)|\leq 1\},
\]
where $\mu$ denotes the area of a set in $\mathbb{R}^2$. In 1933, Mahler~\cite{Mahler} derived an asymptotic formula for the cardinality $|\mathcal{N}_F(Z)| \sim A_F Z^{\frac{2}{d}}$, when $F$ is irreducible. 
However, the absolute constant in the error term depends on the form $F$. To achieve our result we need an upper bound of $|\mathcal{N}_F(Z)|$, which is uniform on the coefficients of $F$. In this direction, we refer to a result by Schmidt \cite{SCHM} from 1987 in the following lemma.
\begin{lemma}\label{a result of Schmidt}
Let $F$ be a binary form with integer coefficients of degree $d \geq 3$ which is irreducible over the rationals. Suppose that not more than $s+1$ of the coefficients are non-zero. Then we have
\[
|\mathcal{N}_F(Z)| \ll \sqrt{sd} Z^{\frac{2}{d}} \bigg(1+ \frac{1}{d}\log{Z}\bigg),
 \]
where the constant implied by $\ll$ is absolute.
\end{lemma}
When counting solutions of large size, we should expect a stronger upper bound on the number of solutions.
Following Heath-Brown’s proof of Theorem 8 in~\cite{Heath}, Stewart and Xiao~\cite[Lemma 2.1]{SX} recently proved the following.
\begin{lemma}\label{a result of Stewart and Xiao}
Let $F$ be a binary form with integer coefficients, non-zero discriminant, and degree $d \geq 3$. Let $Z$ be a positive real number, and $\gamma$ be a real number larger than $\frac{1}{d}$. The number of pairs of integers $(x, y)$ with
\[
0 < |F(x, y)| \leq Z
\]
for which
$
\max\{|x|, |y|\}>Z^\gamma
$
is
\[
O_F\left(Z^{\frac{1}{d}}\log Z + Z^{1-(d-2)\gamma}\right).
\]
\end{lemma}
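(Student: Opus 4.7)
The plan is to factor $F$ over $\mathbb{R}$, reduce the count to integer points near a real linear factor, and split the counting into a moderate range (bounded by a direct per-$y$ count) and a deep range (bounded via Diophantine approximation to a real root).

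Write $F(x,y) = a_0 \prod_{i=1}^{d}(x - \alpha_i y)$, where the roots $\alpha_i$ are distinct because the discriminant is non-zero. Complex conjugate pairs $\alpha_i = u\pm iv$ with $v\neq 0$ give
\begin{equation*}
|x-\alpha_i y|^2 = (x-uy)^2 + (vy)^2 \geq v^2 y^2,
\end{equation*}
so such factors contribute $\gg_F |y|$ and cannot be the dominant small factor. By symmetry we may assume $|y|\geq |x|$ and hence $|y|>Z^\gamma$, treating the case $|x|>|y|$ identically with $F(y,x)$. For any integer solution the real root $\alpha_i$ minimising $|x/y-\alpha_i|$ satisfies $|x-\alpha_j y| \gg_F |y|$ for $j\neq i$, so that the inequality $|F(x,y)|\leq Z$ forces
\begin{equation*}
|x - \alpha_i y| \ll_F \frac{Z}{|y|^{d-1}}.
\end{equation*}

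Fix a real root $\alpha_i$ and split the range of $|y|$ at $Z^{1/(d-1)}$. In the intermediate range $Z^\gamma < |y| \leq Z^{1/(d-1)}$ the bound on $|x-\alpha_i y|$ is $\geq 1$, giving $O(Z/|y|^{d-1})$ admissible $x$ for each $y$; summing and using $d\geq 3$,
\begin{equation*}
\sum_{|y| > Z^\gamma} \frac{Z}{|y|^{d-1}} \ll \frac{Z}{(Z^\gamma)^{d-2}} = Z^{1-(d-2)\gamma}.
\end{equation*}
In the deep range $|y|>Z^{1/(d-1)}$ the interval for $x$ has length $<1$, so at most one integer $x$ per $y$, namely the nearest integer to $\alpha_i y$, and we must count $y$ with $\|\alpha_i y\| \leq Z/|y|^{d-1}$. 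A dyadic decomposition $|y|\in(Y,2Y]$ together with a three-distance/continued-fraction estimate controls this by $O(Z Y^{-(d-2)}+1)$; summing the first term geometrically over $Y\geq Z^{1/(d-1)}$ and sharpening via the two-dimensional lattice argument of Heath-Brown's Theorem~8 yields the final $O(Z^{1/d}\log Z)$ contribution (with the $\log Z$ accounting for the $O(\log Z)$ dyadic levels up to the trivial cap on $|y|$ from Mahler's theorem).

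The main obstacle is the deep range: one must bound how often $\alpha_i y$ comes close to an integer without losing uniformity in $F$, which rules out Roth-type tools and forces an effective, lattice-theoretic counting. The intermediate range is elementary once the factorisation is in place, so the entire difficulty is packaged into the quantitative three-distance/Heath-Brown estimate that produces the $Z^{1/d}\log Z$ term.
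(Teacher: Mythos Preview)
The paper does not give its own proof of this lemma: it is quoted from Stewart--Xiao~\cite{SX}, with the remark that their argument follows Heath-Brown's Theorem~8 in~\cite{Heath}. Your outline matches that route --- factor $F$ over $\mathbb{R}$, isolate the nearest real root, and split at $|y|\asymp Z^{1/(d-1)}$ --- and the intermediate-range term $Z^{1-(d-2)\gamma}$ falls out correctly from the per-$y$ count.

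The gap is in the deep range. Your three-distance bound $O\bigl(ZY^{-(d-2)}+1\bigr)$ per dyadic block sums only to $O\bigl(Z^{1/(d-1)}\bigr)$ for the first term, which exceeds both target terms once $\gamma>1/(d-1)$; and the ``$+1$'' per block cannot be summed to $O(\log Z)$ without an a~priori cap on $|y|$, which Mahler's theorem does not supply (for irreducible $F$ of degree $d$, Liouville gives no such cap --- think of Pell-type equations). You then invoke ``sharpening via Heath-Brown's Theorem~8'' to reach $Z^{1/d}\log Z$, but that sharpening is precisely the non-elementary content the lemma is recording, and you do not carry it out. In short: you have correctly reproduced the reduction and the easy term, and --- like the paper itself --- you defer the genuine work to the cited literature.
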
  
For our application, it is necessary to ensure that the constant implied by the symbol $\ll$ in Lemma~\ref{a result of Stewart and Xiao} is either absolute or depends only on $d$. In their lemma, Stewart and Xiao noted that, rather than relying on Roth’s theorem as in Heath-Brown’s argument, one can handle the large solutions using the Thue–Siegel principle. This alternative approach leads to an effective constant. In what follows, we establish such a result by combining Heath-Brown’s method with a result of Győri~\cite{GY}, which is based on the Thue–Siegel principle.

\begin{lemma}\label{efficitive bound for large solutions}
Let $a, b\in \mathbb{Z}\setminus \{0\}$, and $F(x, y)=ax^d-by^d$ with $d\geq 3$.  Let $Z$ be a positive real number, and $\gamma$ be a real number larger than $\frac{1}{d}$. Then the number of pairs of integers $(x, y)$ with
\[
0 < |F(x, y)| \leq Z
\]
for which
$
\max\{|x|, |y|\}>Z^\gamma
$
is
\[
O_d \left(Z^{\frac{1}{d}} + \frac{1}{\min(|a|, |b|)}Z^{1-(d-2)\gamma}\right)\log{(|ab|Z)},
\]
where the absolute constant depends only on $d$.
\end{lemma}
\begin{proof}
For the shake of simplicity, it is enough to consider the case when $a\geq b\geq 1$. Let us define the size of the set of large solutions by
\begin{align*}
N(Z, \gamma):= |\{ (x, y)\in \mathbb{Z}^2 : 0 < |F(x, y)| \leq Z, \max\{|x|, |y|\}>Z^\gamma \}|,
\end{align*}
and the size of the set of primitive large solutions by
\begin{align*}
S(Z, Z^\gamma):= |\{ (x, y)\in \mathbb{Z}^2 : 0 < |F(x, y)| \leq Z, \max\{|x|, |y|\}>Z^\gamma, (x,y)=1 \}|.
\end{align*}
We can relate them by the following equation:
\begin{align*}
N(Z, \gamma)= \sum_{n=1}^{Z^{\frac{1}{d}}}S\bigg( \frac{Z}{n^d}, \frac{Z^\gamma}{n}\bigg).
\end{align*}
For $C\geq Z^\gamma$, we denote the size of the set of dyadic primitive solutions by
\begin{align*}
S_1(Z, C):= |\{ (x, y)\in \mathbb{Z}^2 : 0 < |F(x, y)| \leq Z,C\leq  \max\{|x|, |y|\}\leq 2C, (x,y)=1 \}|.
\end{align*}
Then for a parameter $Y\geq C$ to be chosen later,  
\begin{align*}
S(Z, C) = \sum_{j=0}^{\infty}S_1\big( Z, 2^jC\big) \leq  \sum_{j=0}^{\lceil\log{(Y/C)}/\log 2\rceil +1}S_1\big( Z, 2^jC\big) + S(Z, Y).
\end{align*}
A result of Győri~\cite[Theorem 1]{GY} gives under the choice of
\begin{align}\label{choice of Y}
 Y= \bigg(  \frac{1}{\sqrt{|\Delta|}} 2^{d+1}d^{d/2}M^d Z\bigg)^{\frac{1}{d-2}+\frac{1}{d^2}}= \bigg(  2^{d+1}a^{\frac{d+1}{2}}b^{\frac{1-d}{2}}  Z\bigg)^{\frac{1}{d-2}+\frac{1}{d^2}},
 \end{align}that
 \[S(Z, Y)\leq 25d,\]
 where $M=a$ is the Mahler measure of $F(x,1)$, and $\Delta = (-1)^{\frac{d(d-1)}{2}}d^d(ab)^{d-1}$, the discriminant of the form $F(x, y)$.
 Hence we have 
\begin{align}\label{S(Z,C)}
S(Z, C)\leq  \left(\lceil\log{(Y/C)}/\log 2\rceil +1\right) \max_{0\leq j\leq \lceil\log{(Y/C)}/\log 2\rceil +1} S_1\big( Z, 2^jC\big) + 25d.
\end{align}
Letting  $C_1=2^jC$ for any fixed $j$ in the range, we need an upper bound for $S_1\big( Z, C_1\big)$.

Set $Z_1 := Z/a$ and $\alpha_j=(b/a)^{1/d}e(j/d), \quad j=1,2,\ldots,d$.
\begin{case}[$d$ is fixed and even] It therefore suffices to count the solutions in \(\mathbb{N}^2\), since the total number of solutions is exactly four times the number of solutions in this region.
	
We notice that
\begin{align*}
|x-\alpha_j y|^2 & = x^2 + y^2(b/a)^{2/d} -2xy(b/a)^{1/d}\cos\left(\frac{2\pi j}{d}\right) = (x-\alpha_dy)^2 + 4xy(b/a)^{1/d}\sin^2\left(\frac{\pi j}{d}\right)\\
&= (x-\alpha_dy)^2\cos^2\left(\frac{\pi j}{d}\right) + (x+\alpha_dy)^2\sin^2\left(\frac{\pi j}{d}\right) \geq (x+\alpha_dy)^2\sin^2\left(\frac{\pi j}{d}\right).
\end{align*}
Thus we have
\begin{align*}
|x-\alpha_j y| > (x+\alpha_dy) \bigg|\sin\left(\frac{\pi j}{d}\right)\bigg|=C_1\bigg( \frac{x}{C_1}+\alpha_d\frac{y}{C_1}\bigg) \bigg|\sin\left(\frac{\pi j}{d}\right)\bigg|.
\end{align*}
Note that either $\frac{x}{C_1}\geq 1$ or $\frac{y}{C_1}\geq 1$. Since $ \big|\sin(\frac{\pi j}{d})\big|>0$ is a constant for all $j\neq d$, we have 
\begin{align}\label{08/25}
|x-\alpha_j y| \gg_d \begin{cases}  C_1 & \mbox{ if } x \geq C_1,\\
 C_1\alpha_d & \mbox{ if } y \geq C_1.
\end{cases}
\end{align}
\end{case}

\begin{case}[$d$ is fixed and odd]
In this case, whenever the components of a primitive solution have the same sign, we can establish \eqref{08/25} using the same argument as in Case~1. However, if the components have opposite signs, we proceed as follows:
\begin{align*}
|x-\alpha_j y|^2 & = x^2 + y^2(b/a)^{2/d} + 2|x||y|(b/a)^{1/d}\cos\left(\frac{2\pi j}{d}\right) \\
&= (|x|+\alpha_d|y|)^2\cos^2\left(\frac{\pi j}{d}\right) + (|x|-\alpha_d|y|)^2\sin^2\left(\frac{\pi j}{d}\right)\\
&\geq (|x|+\alpha_d|y|)^2\cos^2\left(\frac{\pi j}{d}\right).
\end{align*}
Since \( d \) is fixed and odd, \( \big|\cos\left(\frac{\pi j}{d}\right)\big| > 0 \) is bounded below by a positive constant for all \( j \), and the conclusion~\eqref{08/25} follows.
\end{case}
Thus, in both cases, we obtain 
\begin{align*}
|x-\alpha_d y|C_1^{d-1}\alpha_d^{d-1}\ll |x-\alpha_d y|\prod_{j\neq d}|x-\alpha_j y|\ll Z_1.
 \end{align*}
This implies that \[  |x-\alpha_d y|\ll_d \frac{Z_1}{\alpha_d^{d-1}C_1^{d-1}}.\]
Heath-Brown observed (see the proof of \cite[Theorem~8]{Heath}) that an upper bound for
\(|S_1(Z, C_1)|\) is given by the number of lattice points in the parallelogram
\[
|y|\leq 2C_1,
\qquad
|x-\alpha_d y|\ll_d \frac{Z_1}{\alpha_d^{d-1}C_1^{d-1}}.
\]
This region has area, say \(A(Z)\), satisfying
\[
A(Z)\ll_d \frac{Z_1}{\alpha_d^{d-1}C_1^{d-2}}.
\]
We then enclose this parallelogram in a rectangle centered at the origin with same area and the same number of lattice points. Finally, this rectangle can be further embedded in an ellipse whose area is comparable up to an absolute constant (see also the proof of \cite[Lemma~1, part (vii)]{Heath}).

 Thus, by Lemma 1 of \cite{HB1},  we have 
\begin{align}\label{S_1(Z,C)}
|S_1\big( Z, C_1\big)|\ll_d 1+ \frac{A(Z)}{\alpha_d}\ll 1+  \frac{Z_1}{\alpha_d^{d}C_1^{d-2}} \ll_d 1 + \frac{Z}{bC_1^{d-2}}.
\end{align}
By combining \eqref{S_1(Z,C)} with \eqref{S(Z,C)} with the above choice of $Y$ from \eqref{choice of Y}, we have 
\begin{align*}
S(Z, C)\ll_d \bigg( 1 + \frac{Z}{bC^{d-2}} \bigg) \log{(abZ)}.
\end{align*}
This gives
\begin{align*}
N(Z, \gamma)= \sum_{n=1}^{Z^{\frac{1}{d}}}S\bigg( \frac{Z}{n^d}, \frac{Z^\gamma}{n}\bigg)\ll_d  \bigg( Z^{\frac{1}{d}} + \frac{Z^{1-\gamma(d-2)}}{b} \bigg) \log{(abZ)}.
\end{align*}
If $a\leq b$, then we can have the above estimate with $b$ replaced by $a$. Hence the result follows.
\end{proof}

As an application of the above lemmas, we bound the term $\mathcal{E}_{h, z}(H)$ as follows.
\begin{lemma}\label{E(H)}
Let $\epsilon>0$ be given and suppose that $1<\nu\leq 2$. Recall $\mathcal{E}_{h, z}(H)$ from Lemma \ref{newlem1}. 
For $0\leq \alpha<\frac12$, we have
\begin{align*}
\mathcal{E}_{h, z}(H)\ll  \begin{cases}
H^{\frac{1-2\alpha}{2}-\frac{\epsilon}{5}} & \mbox{ if }  z\leq \min\big\{X^{\frac{1}{(1-\alpha)}}H^{-\frac{1+2\alpha}{2-2\alpha}-\epsilon}, H^{\frac{1-2\alpha}{2-2\alpha}- \epsilon}X^{\frac{1}{2-2\alpha}}\big\}, \,  H\leq X^{\frac{2+\alpha}{3+2\alpha}-\epsilon},\, k=2,\\
H^{\frac{1-2\alpha}{k}-\frac{\epsilon}{k}} & \mbox{ if } z\leq \min \big\{X^{\frac{2}{\nu(2-\alpha)}}H^{\frac{1-2\alpha}{\nu(2-\alpha)}-\epsilon}, X^{\frac{\nu}{2(\nu-\alpha)}}H^{\frac{1-2\alpha}{2(\nu-\alpha)}-\epsilon}, X^{\frac{2(k-\nu)}{(4-\nu)k-2\nu}-\epsilon}\big\},\, k\geq 3.
\end{cases}
\end{align*}
For $\frac{1}{2}<\alpha<1$, let  $\epsilon_{\alpha, k}>0$ be sufficiently small, depending on $\epsilon, \alpha$ and $k$. Then we obtain 
\[\mathcal{E}_{h, z}(H)\ll H^{-\epsilon_{\alpha, k}},\] whenever
\begin{align*}
z\leq 
  \begin{cases}
 X^{\frac{1}{2(2-\alpha)}}H^{\frac{1}{2(2-\alpha)}-\epsilon} & \mbox{ if }  k=1,\\
  \min\big\{X^{\frac{1}{1-\alpha}}H^{-\frac{1}{1-\alpha}-\epsilon}, X^{\frac{1}{2-2\alpha}}H^{-\epsilon}\big\} & \mbox{ if } H\leq X^{\frac{2+\alpha}{4}-\epsilon} \text{ and }  k=2,\\
 \min \big\{X^{\frac{2}{\nu(2-\alpha)}-\epsilon}, X^{\frac{\nu}{2(\nu-\alpha)}-\epsilon}, X^{\frac{2(k-\nu)}{(4-\nu)k-2\nu}-\epsilon}\big\} & \mbox{ if } k\geq 3.
\end{cases}
\end{align*}
Suppose that $1\leq \alpha< 2$. In this case, we have $\mathcal{E}_{h, z}(H)\ll H^{-\epsilon_{\alpha, k}}$, provided that
\begin{align*}
z\leq 
\begin{cases}
X^{\frac{1}{2(2-\alpha)}}H^{\frac{1}{2(2-\alpha)}-\epsilon} & \mbox{ if }  k=1,\\
X^{1-\epsilon} & \mbox{ if } H\leq X^{\frac{2+\alpha}{4}-\epsilon} \text{ and }  k=2,\\
X^{1-\epsilon} & \mbox{ if } k\geq 3.
\end{cases}
\end{align*}
\end{lemma}

\begin{proof} We will prove the lemma by considering different values of $k$ in separate cases.\\
{\it Case $1$.}
Let us assume that $k=1$. By splitting $n_j$ and $d_j$ dyadically, for any $D_1, D_2 \leq z$ and any $N_1, N_2 \leq N$, we have
\begin{align*}
\mathcal{E}_{h, z}(H) &\ll z^{2\epsilon_1}\max_{\substack{D_1, D_2\leq z\\
		N_1, N_2\leq N}}\frac{(\log X)^4}{(D_1D_2)^{\alpha}}\min\bigg\{\frac{1}{N_1}, \frac{H}{D_1}\bigg\}\min\bigg\{\frac{1}{N_2}, \frac{H}{D_2}\bigg\}\\
& \times \sum_{\substack{n_1\sim N_1\\ n_2\sim N_2}}\# \Big\{d_j\sim D_j:\, 0<\bigg|\frac{n_1}{d_1}- \frac{n_2}{d_2}\bigg|\leq \frac{BH^{\epsilon_2}}{X}\Big\}
\end{align*}
since $r(D)\ll D^{\epsilon_1}\ll z^{\epsilon_1}$.
Note that, a solution of the above system exists only if $N_1D_2\asymp N_2D_1$. In this case, the innermost sum is equivalent to 
\[
\# \Big\{r_1 \sim N_2D_1; r_2\sim N_1D_2:\, 0<|r_1- r_2|\leq Y\Big\},
\]
where $Y=\frac{BH^{\epsilon_2}D_1 D_2}{X}$. Observe that 
\[
\# \Big\{r_1 \sim N_2D_1; r_2\sim N_1D_2:\, 0<|r_1- r_2|\leq Y\Big\}\ll \max\{N_2D_1 Y,\, N_1D_2 Y,\, Y^2\}. 
\]
The bound of $\mathcal{E}_{h, z}(H)$ attains its maximum provided that $N_j=\frac{D_j}{H}$ for $j=1, 2$.
Therefore, for $\alpha\in (1/2, 2)$,
\begin{align*}
\mathcal{E}_{h, z}(H) 
& \ll z^{2\epsilon_1} H^{\epsilon_2}\max_{\substack{D_1, D_2\leq z}}\frac{(D_1D_2)^{2-\alpha}}{HX}(\log X)^4 \ll  
H^{-\epsilon_{\alpha, 1}},
\end{align*}
provided that $z\leq X^{\frac{1}{2(2-\alpha)}}H^{\frac{1}{2(2-\alpha)}-\epsilon}$, by taking $\epsilon_1$ and $\epsilon_2$ suitably small.
 
\vspace{2mm}
\noindent
{\it Case $2$.}\label{sub2.1}
Assume that $k=2$ and $0\leq \alpha<\frac12$. We choose $\epsilon_2=\frac{\epsilon}{4}$ and $\epsilon_1\ll \epsilon^2$. By using the inequality $h(d)\ll \frac{1}{d^{\alpha-\epsilon_1}}$, we can follow the proof of Proposition $5$ \cite[eq. (32)]{GMRR} to obtain  $\mathcal{E}_{h, z}(H) \ll H^{\frac{1-2\alpha}{2}-\frac{\epsilon}{5}}$ only when  $z\leq \min\Big\{X^{\frac{1}{1-\alpha}}H^{-\frac{1+2\alpha}{2-2\alpha}-\epsilon}, X^{\frac{1}{2-2\alpha}}H^{\frac{1-2\alpha}{2-2\alpha}- \epsilon}\Big\}$ and $H\leq X^{\frac{2+\alpha}{3+2\alpha}-\epsilon}$.

Now assuming $\frac{1}{2}<\alpha< 1$, we again follow the proof of Proposition $5$ \cite[eq. (32)]{GMRR}. 
The important observation is that
\[
\# \Big\{(d_1 \sim D_1; d_2\sim D_2:\, 0<|n_1d_2^2- n_2 d_1^2|\leq \frac{BH^{\epsilon_2}D_1^2 D_2^2}{X}\Big\}\neq 0
\]
whenever $D_1D_2\gg X^{1/2}H^{-\epsilon_2/2}$.

If $\sqrt{\frac{n_2}{n_1}}$ is a quadratic irrational, then $\mathcal{E}_{h, z}(H)$ is bounded above by
 \begin{align}\label{16.4.4.15}
 \ll z^{2\epsilon_1}(\log X)^4 \max_{\substack{D_1, D_2\leq z^{1/2}}}\frac{H^{\epsilon_2}}{D_1^{\alpha} D_2^{\alpha}}\left(\frac{D_1D_2H}{X}+1+\frac{D_1D_2}{X^{1/2}}\right)\ll H^{-\epsilon_{\alpha, k}},
 \end{align}
  whenever 
 $z\leq \min\Big\{X^{\frac{1}{1-\alpha}}H^{-\frac{1}{1-\alpha}-\epsilon}, X^{\frac{1}{2-2\alpha}}H^{-\epsilon}\Big\}$.

 Additionally, if $\sqrt{\frac{n_2}{n_1}}$ is rational, then we have 
 \begin{align}\label{rational}
 \mathcal{E}_{h, z}(H)\ll z^{\epsilon_1}\max_{\substack{D_1, D_2\leq z^{1/2}}}\frac{H^2}{(D_1 D_2)^{\alpha}X}\ll H^{-\epsilon_{\alpha, 2}},
 \end{align}
 since $H\leq X^{\frac{2+\alpha}{4}-\epsilon}$ and $D_1D_2\gg X^{1/2}H^{-\epsilon_2/2}$.

For $1\leq \alpha< 2$, we get from \eqref{16.4.4.15} and \eqref{rational} that $\mathcal{E}_{h, z}(H)\ll H^{-\epsilon_{\alpha, 2}}$, since $H\leq X^{\frac{2+\alpha}{4}-\epsilon}$.

\vspace{2mm}
\noindent
{\it Case $3$.}
Let $k\geq 3$.  First assume that $1\leq \alpha< 2$. Since $z\leq X^{1-\epsilon}$, Lemma \ref{a result of Schmidt} yields 
\begin{align*}
\mathcal{E}_{h, z}(H)\ll z^{2\epsilon_1}(\log X)^4\max_{\substack{D_1, D_2\leq z^{1/k}}}(D_1D_2)^{-\alpha}\left(\frac{H^{\epsilon_2}D_1^kD_2^k}{X}\right)^{\frac{2}{k}}\ll H^{-\epsilon_{\alpha, k}}.
\end{align*}

Now, we consider $ \alpha\in [0, 1)\setminus \{1/2\}$. Let us define  
\[
\mathcal{R}:=\# \Big\{d_1\sim D_1, d_2\sim D_2:\, 0<|n_1d_2^k- n_2 d_1^k|\leq \frac{BH^{\epsilon_2}D_1^k D_2^k}{X}\Big\}.
\] 
For $1<\nu \leq 2$, we divide $\mathcal{R}$ into two parts as follows   
\[
\mathcal{R}_{\nu}(n_1, n_2):= \# \Big\{d_j\sim D_j:\, D_j \leq z^{1/k}, \, D_1D_2< z^{\nu/k}, \, 0<|n_1d_2^k- n_2 d_1^k|\leq \frac{BH^{\epsilon_2}D_1^k D_2^k}{X}\Big\},
\]
and 
\[
\mathcal{R}_{\nu}^C(n_1, n_2):= \# \Big\{d_j\sim D_j:\, D_j \leq z^{1/k}, \, D_1D_2\geq z^{\nu/k}, \, 0<|n_1d_2^k- n_2 d_1^k|\leq \frac{BH^{\epsilon_2}D_1^k D_2^k}{X}\Big\}.
\]
Therefore, 
\begin{align*}
\mathcal{E}_{h, z}(H) \ll z^{2\epsilon_1}\max_{\substack{D_1, D_2\leq z^{1/k}}}\frac{(\log X)^2}{(D_1D_2)^{\alpha}}\sum_{n_j\leq N}\frac{1}{n_1 n_2}\left(\mathcal{R}_{\nu}(n_1, n_2)+ \mathcal{R}_{\nu}^C(n_1, n_2)\right).
\end{align*}
We utilize the lemmas introduced at the start of this sub-section to establish upper limits for the above quantities.
Lemma \ref{a result of Schmidt} gives us
\[
\mathcal{R}_{\nu}(n_1, n_2)\ll_k \left(\frac{H^{\epsilon_2}D_1^k D_2^k}{X}\right)^{\frac{2}{k}}\log z,
\]
where the implied constant in $\ll$ depends only on $k$.

Let us define $Z=\frac{BH^{\epsilon_2}D_1^k D_2^k}{X}$. Given the range of $d_1$ and $d_2$, it follows that if $D_1D_2> z^{\nu/k}$, then $\max\{D_1, D_2\}> z^{\nu/2k}$. In light of Lemma \ref{efficitive bound for large solutions}, where $\gamma=\frac{1}{k}+\frac{2-\nu}{k(k-2)}$, we have
\begin{align*}
\mathcal{R}_{\nu}^C(n_1, n_2)&\ll \# \Big\{(d_1, d_2)\in \mathbb{Z}^2:\, \max\{D_1, D_2\}>Z^{\gamma}, \, 0<|n_1d_2^k- n_2 d_1^k|\leq Z\Big\} \\
&\ll \left(Z^{1/k}+ Z^{1-(k-2)\gamma}\right)\log(|n_1 n_2| X)\ll Z^{\frac{\nu}{k}} \log X,
\end{align*}
since $Z^{\gamma}\leq z^{\frac{\nu}{2k}}$, and $1\leq |n_1|, |n_2|\leq X^{4}$. 
The condition $Z^{\gamma}\leq z^{\frac{\nu}{2k}}$ is equivalent to $z\leq (XH^{-\epsilon_2})^{\frac{2(k-\nu)}{(4-\nu)k-2\nu}}$.

Therefore, when $z\leq (XH^{-\epsilon_2})^{\frac{2(k-\nu)}{(4-\nu)k-2\nu}}$, we can combine these estimates to find that
\begin{align*}
\mathcal{E}_{h, z}(H)\ll z^{2\epsilon_1} (\log X)^5\Bigg(\max_{\substack{D_1, D_2\leq z^{1/k}\\ D_1 D_2\leq z^{\nu/k}}}\frac{1}{(D_1D_2)^{\alpha}}\left(\frac{H^{\epsilon_2}D_1^k D_2^k}{X}\right)^{\frac{2}{k}}+\max_{\substack{D_1, D_2\leq z^{1/k}\\ z^{\nu/k}<D_1D_2< z^{2/k}}}\frac{Z^{\frac{\nu}{k}}}{(D_1D_2)^{\alpha}}\Bigg).
\end{align*}

\medskip
\noindent{\it Subcase 3.1.}
Assume that $0\leq \alpha<\frac12$. Choosing $\epsilon_2=\frac{\epsilon}{k}$ and $\epsilon_1\ll \epsilon^2$, we derive 
\[ 
\mathcal{E}_{h, z}(H)\ll H^{\frac{1-2\alpha}{k}-\frac{\epsilon}{k}},
\]
provided that $z\leq \min \Big\{X^{\frac{2}{\nu(2-\alpha)}}H^{\frac{1-2\alpha}{\nu(2-\alpha)}-\epsilon}, X^{\frac{\nu}{2(\nu-\alpha)}}H^{\frac{1-2\alpha}{2(\nu-\alpha)}-\epsilon}, X^{\frac{2(k-\nu)}{(4-\nu)k-2\nu}-\epsilon}\Big\}$
and $\nu> 1.4$.

\medskip
\noindent{\it Subcase 3.2.}
Assume that $\frac12<\alpha<1$. Under the condition
\[
z\leq
\min\Big\{
X^{\frac{2}{\nu(2-\alpha)}-\epsilon},
\,
X^{\frac{\nu}{2(\nu-\alpha)}-\epsilon},
\,
X^{\frac{2(k-\nu)}{(4-\nu)k-2\nu}-\epsilon}
\Big\},
\]
we have
$
\mathcal{E}_{h,z}(H)
\ll
H^{-\epsilon_{\alpha,k}},
$ which completes the proof.
\end{proof}

\begin{proof}[Proof of Proposition \ref{below z-barrier}]
 The proof is completed by combining Lemmas \ref{lemma of main term}, \ref{newlem1}, and  \ref{E(H)}.
\end{proof}

\begin{proof}[Proof of Proposition \ref{proposition for lower growth family}]
The proof begins by considering $f \in \mathcal{F}_{\alpha,\beta, k}$ with $\alpha > \frac{1}{2}$. Recall $I_W$ from \eqref{I_W} and define $I_{S}$ as
\begin{align*}
I_{S}:
 = 2H^2\sum_{d_1, d_1\geq 1}\frac{h(d_1)h(d_2)}{d_1^kd_2^k}\sum_{\lambda\geq 1}S\left(\frac{H\lambda}{(d_1^k, d_2^k)}\right)^2.
\end{align*}
By following the proof of Lemma \ref{lemma of main term}, we are only left to show that $I_S-I_W$ is bounded by $O(H^{-\epsilon_{\alpha, k}})$. Using the bound in \eqref{compare}, we have
\begin{align*}
|I_S-I_W|&\ll  \sum_{d_1, d_2\geq 1}\frac{h(d_1)h(d_2)}{d_1^{k}d_2^{k}}\sum_{\lambda\geq 1}\frac{(d_1, d_2)^{2k}}{\lambda^2}\mathds{1}\left(\frac{H\lambda}{(d_1^k, d_2^k)}\geq H^{\epsilon'}\right)\ll H^{\frac{(1-\epsilon')(1-2\alpha+2\epsilon_1)}{k}}\ll H^{-\epsilon_{\alpha, k}},
\end{align*}
 for a suitably small choice of $\epsilon'$ and $\epsilon_1$. 
Since $2\sin^2{x}= 1 - \cos{2x}$, by using Lemma \ref{lem1}, 
\begin{align*}
I_{S, z}=\sum_{d_1,d_2 \geq 1} \frac{h(d_1)h(d_2)}{d_1^k d_2^k}(d_1, d_2)^{2k} \left( \left\{\ \frac{H}{(d_1, d_2)^k} \right\} - \left\{\ \frac{H}{(d_1, d_2)^k} \right\} ^2\right) +O\left(H^{-\epsilon_{\alpha, k}}\right).
\end{align*}
Introducing the variable $d$ as $d:= (d_1, d_2)$ and writing $d_1=e_1 d$, $d_2=e_2 d$, we have $(e_1, e_2)=1$. With this change of variables, we can simplify and reduce the main term of the above expression to $c_{h, k}(H)$, which is the main term of this proposition (see eq. \eqref{C_h,k}). Finally, Lemmas~\ref{newlem1} and \ref{E(H)} complete the proof of the proposition. 
\end{proof}

\section{Proof of  Proposition \ref{19.04.22.6:26}}\label{section 6}
We collect several fundamental lemmas from \cite{GMRR} that serve as the main tools for proving the proposition.
\begin{lemma}\label{large value theorem}
Assume $N, T\geq 1$ and $V>0$. Let $A(s)=\sum_{n\leq N}a_n n^{-s}$ be a Dirichlet polynomial and let $S=\sum_{n\leq N}|a_n|^2$. Suppose $\mathcal{T}$ is a set of $1$-spaced points $t_r\in [-T, T]$ such that $|A(it_r)|\geq V$. Then
\[
|\mathcal{T}|\ll \left(SNV^{-2}+T\min\{SV^{-2}, S^3 NV^{-6}\}\right)(\log (2NT))^{6}.
\] 
\end{lemma}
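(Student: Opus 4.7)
The plan is to prove Lemma \ref{large value theorem} by combining two classical ingredients: the Montgomery--Vaughan mean value theorem for Dirichlet polynomials at well-spaced points, and a Halász-type duality/moment argument. Together these give the two bounds inside the minimum.

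First I would record the starting identity. For every $t_r \in \mathcal{T}$ we have $|A(it_r)| \geq V$, so trivially
\[
V^{2}|\mathcal{T}| \;\leq\; \sum_{r}|A(it_r)|^{2}.
\]
Then I would invoke the Montgomery--Vaughan mean value theorem: for any $1$-spaced real sequence $t_r \in [-T,T]$,
\[
\sum_{r}|A(it_r)|^{2} \;\ll\; (N+T)\, S \,(\log 2NT)^{2}.
\]
This immediately yields $|\mathcal{T}| \ll (NS V^{-2}+T S V^{-2})(\log 2NT)^{2}$, which accounts for the term $SNV^{-2}$ and for the first option $TSV^{-2}$ inside the minimum.

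Next, to obtain the alternative bound $T S^{3} N V^{-6}$ inside the minimum, I would use Halász's duality method. Writing $R=|\mathcal{T}|$ and choosing unit weights $\omega_r = \overline{A(it_r)}/|A(it_r)|$, one has
\[
V R \;\leq\; \sum_{r}\omega_r A(it_r) \;=\; \sum_{n\leq N} a_n \Bigl(\sum_{r}\omega_r n^{-it_r}\Bigr).
\]
An application of Cauchy--Schwarz and expansion of the square leads to
\[
V^{2} R^{2} \;\leq\; S \sum_{r_1,r_2}\omega_{r_1}\overline{\omega_{r_2}}\sum_{n\leq N} n^{i(t_{r_2}-t_{r_1})}.
\]
The diagonal $r_1=r_2$ contributes $\ll NSR$, and the off-diagonal terms are handled by raising to a suitable power: apply the mean value theorem to $A(s)^{k}$, whose length is $N^{k}$, with $k \asymp \log T/\log N$ chosen so $N^{k}\asymp T$. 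Using Cauchy--Schwarz on the resulting Dirichlet coefficients, and noting $|A(it_r)|^{2k}\geq V^{2k}$, one extracts the bound
\[
R V^{6} \;\ll\; T S^{3} N \,(\log 2NT)^{6},
\]
which is the second option inside the minimum.

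The main obstacle is the book-keeping in the Halász step: one needs to estimate $\sum_{m\leq N^{k}}|b_m|^{2}$ where $b_m=\sum_{n_1\cdots n_k=m}a_{n_1}\cdots a_{n_k}$, and show this is dominated by $S^{k}$ up to acceptable logarithmic and divisor-function factors — this is exactly where the $(\log 2NT)^{6}$ arises and where one must carefully balance $k$ against $N$ and $T$. Once both bounds $SNV^{-2}+TSV^{-2}$ and $SNV^{-2}+TS^{3}NV^{-6}$ are established (each with the logarithmic loss), taking the better of the two second terms yields the claimed minimum. I expect no further subtleties, since everything reduces to classical mean value theorems and Halász's duality trick.
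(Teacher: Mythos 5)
The paper does not actually prove Lemma~\ref{large value theorem}: it is one of the ``standard lemmas from \cite{GMRR}'' and is imported verbatim, where in turn it reduces to the discrete Montgomery--Vaughan mean value theorem combined with Huxley's large values estimate. So there is no in-paper argument to compare against, and I am judging your attempt on its own.

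Your derivation of the branch $SNV^{-2}+TSV^{-2}$ from the discrete mean value theorem is fine. The problem is in the branch $TS^{3}NV^{-6}$. You open the Hal\'asz--Montgomery duality correctly and identify the diagonal contribution $NSR$, but the proposed treatment of the off-diagonal does not work. After duality, the off-diagonal piece is
\[
S\sum_{r_1\neq r_2}\Bigl|\sum_{n\le N}n^{\,i(t_{r_2}-t_{r_1})}\Bigr|,
\]
a sum over the fundamental exponential polynomial $B(u)=\sum_{n\le N}n^{iu}$. This has nothing to do with the Dirichlet polynomial $A(s)^{k}$, so applying the mean value theorem to $A(s)^{k}$ is a different, incompatible strategy; you are conflating two arguments. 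Moreover, even pursued on its own terms, that strategy does not yield the stated exponents: with $N^{k}\asymp T$ one gets $RV^{2k}\ll T\sum_{m}|b_m|^{2}$, and the choice $k=3$ (needed to produce the power $V^{-6}$) forces the very restrictive relation $N^{3}\asymp T$; in addition, $\sum_{m\le N^{3}}|b_m|^{2}\ll_{\varepsilon}N^{\varepsilon}S^{3}$ by Cauchy--Schwarz and the divisor bound, not $\ll NS^{3}$, so the factor of $N$ in $TS^{3}NV^{-6}$ does not emerge this way.

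The correct route to the second branch is Huxley's dichotomy. Cover $[-T,T]$ by $O(T/T_0+1)$ windows of length $T_0$. Within a single window containing $R_0$ points, the Hal\'asz step together with the pointwise exponential-sum bound $|B(u)|\ll|u|^{1/2}\log(2+|u|)$ for $1\le|u|\le T_0$ gives
\[
R_0 V^{2}\le S\bigl(N+R_0 T_0^{1/2}\log(2T_0)\bigr),
\]
so $R_0\ll SNV^{-2}$ once $S\,T_0^{1/2}\log(2T_0)\le V^{2}/2$. Choosing $T_0\asymp V^{4}S^{-2}(\log 2NT)^{-2}$ to saturate that constraint and summing over windows yields
\[
R\ll SNV^{-2}+TS^{3}NV^{-6}(\log 2NT)^{c}.
\]
You would need to replace the ``raise to the $k$-th power'' step with this interval-splitting argument (or simply cite Huxley's large values theorem, as \cite{GMRR} do).
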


\begin{lemma}\label{mean value theorem}
Let $A(s)$ be as in Lemma \ref{large value theorem}. Then 
\[
\int_{-T}^T |A(it)|^2dt =(T+O(N))\sum_{n\leq N}|a_n|^2.
\]
\end{lemma}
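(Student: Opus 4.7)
\textbf{Proof proposal for Lemma \ref{mean value theorem}.} The plan is to expand the square, extract the diagonal contribution exactly, and control the off-diagonal terms via a Hilbert-type inequality. Writing $A(it) = \sum_{n\leq N} a_n n^{-it}$ and $\overline{A(it)} = \sum_{m\leq N} \overline{a_m} m^{it}$, I would begin by computing
\[
\int_{-T}^{T} |A(it)|^2\,dt = \sum_{m,n\leq N} a_n \overline{a_m} \int_{-T}^{T} (m/n)^{it}\,dt.
\]
The diagonal terms $m=n$ contribute exactly $2T \sum_{n\leq N} |a_n|^2$, which is the leading main term. For the off-diagonal terms $m\neq n$, I would evaluate the inner integral as $\int_{-T}^T (m/n)^{it}\,dt = 2\sin\bigl(T\log(m/n)\bigr)/\log(m/n)$, so the off-diagonal contribution is bounded in absolute value by
\[
2\,\Bigl|\sum_{\substack{m,n\leq N\\ m\neq n}} \frac{a_n \overline{a_m}}{\log(m/n)}\Bigr|.
\]

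The key step is then to apply the Montgomery--Vaughan Hilbert-type inequality: for real numbers $\lambda_n$ with $\delta_n := \min_{m\neq n}|\lambda_m - \lambda_n|$, one has
\[
\Bigl|\sum_{m\neq n} \frac{a_n \overline{a_m}}{\lambda_n - \lambda_m}\Bigr| \leq \frac{3\pi}{2}\sum_n \frac{|a_n|^2}{\delta_n}.
\]
Taking $\lambda_n = \log n$ gives $\delta_n \geq \log(1 + 1/n) \gg 1/n$, so $1/\delta_n \ll n \leq N$. This yields the desired bound $O\bigl(N \sum_{n\leq N}|a_n|^2\bigr)$ on the off-diagonal contribution, and combining with the diagonal main term gives the asserted formula.

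The only subtlety is whether one wants the slightly sharper bound $\sum_n (2T + O(n))|a_n|^2$ (due to Montgomery--Vaughan) or just the stated uniform form with $O(N)$, but since the lemma only requires the weaker statement, the crude bound $1/\delta_n \ll N$ suffices. The step that needs the most care is the Hilbert inequality itself, but since this is a classical result from \cite{Montgomery} that can be cited directly, the proof reduces to a short computation.
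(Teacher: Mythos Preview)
Your proof is correct and is precisely the standard argument for the mean value theorem for Dirichlet polynomials. The paper, however, does not give its own proof of this lemma at all: at the start of Section~\ref{section 6} it simply states that ``We collect some standard lemmas from \cite{GMRR}'' and then records Lemmas~\ref{large value theorem}, \ref{mean value theorem} and \ref{change of variable} without proof. So there is nothing to compare against beyond noting that what you have written is exactly the classical Montgomery--Vaughan proof that underlies the cited result.

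One minor remark: your diagonal computation correctly gives $2T\sum_{n\leq N}|a_n|^2$, whereas the lemma as stated in the paper has $T$ rather than $2T$; this is almost certainly a typo in the statement (the integral is over $[-T,T]$), and your version is the correct one.
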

\begin{lemma}\label{change of variable}
If $F: \mathbb{R}\to \mathbb{C}$ is a square-integrable function and $H\leq X$, then
\[
\int_X^{2X}|F(x+H)-F(x)|^2 dx \ll \sup_{\theta\in \big[\frac{H}{3X}, \frac{3H}{X}\big]}\int_X^{3X}|F(u+\theta u)-F(u)|^2 du.
\]
\end{lemma}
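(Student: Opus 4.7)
The statement converts an additive shift by $H$ on the left into a multiplicative shift by $1+\theta$ on the right. The bridge is the identity $x+H=x(1+H/x)$, which for $x\in[X,2X]$ means the relevant multiplicative factor $H/x$ lies in $[H/(2X),H/X]$, comfortably inside the supremum's range $[H/(3X),3H/X]$. The slack between these two ranges (a factor of order $3$ on either side) is what allows the reduction to close.

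My plan is as follows. First, changing variable $u=x$, $\theta(u)=H/u$ rewrites the left-hand side as
\[
I=\int_X^{2X}|F(u(1+\theta(u)))-F(u)|^2\,du,
\]
so $I$ is a version of the right-hand side integrand but with a $u$-dependent $\theta(u)$ rather than a fixed $\theta$. To reduce to a fixed $\theta$, I introduce the auxiliary double integral
\[
J=\int_{H/(3X)}^{3H/X}\int_X^{3X}|F(u(1+\theta))-F(u)|^2\,du\,d\theta,
\]
which trivially satisfies $J\le (H/X)\sup_\theta \int_X^{3X}|F(u(1+\theta))-F(u)|^2\,du$ since the $\theta$-interval has length $\asymp H/X$. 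So the task reduces to showing $I\ll (X/H)\cdot J$. Substituting $h=u\theta$ in $J$ (with $d\theta=dh/u$) rewrites it as
\[
J=\int_X^{3X}\frac{1}{u}\int_{uH/(3X)}^{3uH/X}|F(u+h)-F(u)|^2\,dh\,du,
\]
and for every $u\in[X,2X]$ the inner $h$-range contains a band of length $\asymp H$ around $h=H$. Applying the triangle inequality $|F(u+H)-F(u)|^2\le 2|F(u+h)-F(u)|^2+2|F(u+H)-F(u+h)|^2$ and averaging in $h$ over this band produces, on one side, a contribution controlled by $J$ itself, and on the other side an error term of the form $\iint|F(u+h)-F(u+H)|^2\,dh\,du$ that is a shift-difference integral at displacements $\lesssim H$; this error is absorbed by a second application of the multiplicative-shift sup at a nearby value of $\theta$, which the slack in $[H/(3X),3H/X]$ accommodates.

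The main obstacle is the last step: because $F$ is only square-integrable (with no pointwise regularity), one cannot compare $|F(u+H)-F(u)|^2$ to an average of $|F(u+h)-F(u)|^2$ for $h$ near $H$ by any smoothness argument. The triangle-inequality splitting necessarily produces a genuine error, and making the proof go through requires re-expressing that error as another instance of the same multiplicative-shift variance appearing on the right-hand side — it is precisely the gap between the natural range $[H/(2X),H/X]$ of $H/x$ and the wider range $[H/(3X),3H/X]$ of the supremum that makes this re-absorption possible.
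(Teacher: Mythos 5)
The paper does not prove this lemma; it is cited without proof from GMRR as one of several ``standard lemmas,'' so there is no in-paper argument to compare against. Your proposal therefore has to stand on its own, and as written it has a genuine gap, which you yourself flag in the last paragraph but do not resolve.

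After the triangle-inequality split and change of variable, the error term you produce is essentially
\[
\frac{1}{X}\int_{X}^{3X}\int_{|\delta|\lesssim H}|F(v)-F(v+\delta)|^2\,d\delta\,dv,
\]
where $\delta=h-H$ ranges over an interval of length $\asymp H$ \emph{containing $0$}. Translating this to the multiplicative form by setting $\theta'=\delta/v$ gives relative shifts $\theta'\in[-cH/X,\,cH/X]$, which include values arbitrarily close to $0$ and negative values. These fall outside the range $\big[\tfrac{H}{3X},\tfrac{3H}{X}\big]$ of the supremum, and the supremum gives no control there: for an $L^2$ function there is no monotonicity of $\delta\mapsto\int|F(v+\delta)-F(v)|^2\,dv$, so one cannot dominate the tiny-$\delta$ contributions by the value at $\delta\asymp H$. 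The claim that ``the slack in $[H/(3X),3H/X]$ accommodates'' the re-absorption is asserted but not carried out, and the natural attempt fails precisely because of this uncontrolled near-zero regime.

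A possible rescue is a second triangle inequality: for $|\delta|<H/3$ write $|F(v+\delta)-F(v)|^2\le 2|F(v+\delta+H)-F(v)|^2+2|F(v+\delta+H)-F(v+\delta)|^2$, so that the first piece involves a shift $\delta+H\in[2H/3,4H/3]$ and the second piece is a copy of the original additive-shift integral $I$ (to be absorbed with a constant strictly less than $1$). But then $(\delta+H)/v$ for $v$ up to $3X$ dips down to about $2H/(9X)$, which is still below $H/(3X)$, so even this requires retuning the constants in both the $\theta$-interval and the $u$-interval, and a careful bookkeeping of the recursive $I$-type contribution. None of this appears in the proposal, so as it stands the argument is incomplete at its crucial step.
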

\begin{proof}[Proof of Proposition \ref{19.04.22.6:26}]

To estimate $\mathcal{K}_{h, z}(X; H)$, we partition the interval $[z^{1/k}, (2X)^{1/k}]$ into dyadic intervals based on the size of $d$. Note that, the condition $H>X^{\epsilon}$ is used to overcome $(\log X)$-loss from dyadic decomposition. Therefore, it is enough to establish a bound for 
\begin{align}\label{dyadic beyond z barrier}
\frac{1}{X}\int_X^{2X} \bigg|\sum_{\substack{x<nd^k\leq x+H\\ d\sim D}}h(d)-H\sum_{d\thicksim D}\frac{h(d)}{d^k}\bigg|^2 dx
\end{align}
for each $D\in [z^{1/k}, (2X)^{1/k}]$.
Let us look at
\[
A(x):=\sum_{\substack{nd^k\leq x\\ d\sim D}}h(d)-xB_h(k) \quad \mbox{and}\quad B_h(s) :=\sum_{d\sim D}\frac{h(d)}{d^s}.
\]
Using Perron's formula, we deduce that
\begin{align*}
A(e^y)=\frac{1}{2\pi i}\int_{2-i\infty}^{2+i\infty}\frac{e^{ys}}{s}\zeta(s)B_h(ks)ds - e^y B_h(k).
\end{align*} 
By shifting the contour of integration to the line $\Re(s)=1/2$, we capture the pole at $s=1$ and eliminate the contribution of the residue at this point when combined with the second term $e^y B_h(k)$. As a result, for any real $w$, we obtain that
\[
\frac{A(e^{w+x})-A(e^x)}{e^{x/2}}=\frac{1}{2\pi}\int_{-\infty}^{\infty}\frac{e^{w(1/2+it)}-1}{1/2+it}e^{itx}\zeta(1/2+it)B_h(k/2+kit)dt.
\]
The Plancherel formula gives 
\begin{align*}
\int_{0}^{\infty}|A(e^{u+w})-A(e^u)|^2\frac{du}{e^u}\ll \int_{\mathbb{R}}\bigg|\frac{e^{w(1/2+it)}-1}{1/2+it}\bigg|^2|\zeta(1/2+it)B_h(k/2+kit)|^2dt.
\end{align*}
Applying Lemma \ref{change of variable},
\begin{align*}
\frac{1}{X}\int_X^{2X}|A(x+H)-A(x)|^2 dx\ll \frac{1}{X}\int_X^{2X}|A(u(1+\theta))-A(u)|^2 du
\end{align*}
for some $\theta\in \big[\frac{H}{3X}, \frac{3H}{X}\big]$. Now we take $w$ satisfying $e^w=1+\theta$, so that $w\asymp \frac{H}{X}$.  Therefore, using a change of variables,
\begin{align}\label{after plancherel}
\frac{1}{X}\int_X^{2X}|A(x+H)-A(x)|^2 dx&\ll X\int_0^{\infty}|A(u(1+\theta))-A(u)|^2\frac{du}{u^2}\nonumber\\
&\ll  X\int_{\mathbb{R}}\bigg|\frac{e^{w(1/2+it)}-1}{1/2+it}\bigg|^2|\zeta(1/2+it)B_h(k/2+kit)|^2dt. 
\end{align}
The well-known sub-convexity bound tells that $\zeta(1/2+it)\ll |t|^{1/6}(\log |t|)^2$ for $|t|\geq 2$ and $B_h(k/2+kit)\ll D^{1-\alpha -\frac{k}{2}+\epsilon_1}$. Therefore, the integral \eqref{after plancherel} with $|t|\geq X^2$ contributes
\[
\ll X D^{1-\alpha -\frac{k}{2}} \int_{X^2}^{\infty}|t|^{-5/3+\epsilon}dt \ll  X^{-\frac13+2\epsilon}D^{1-\alpha -\frac{k}{2}}\ll X^{-\frac14}.
\]
Splitting into the regions $|t|\leq \frac{X}{H}$ and $2^\ell<|t|\leq 2^{\ell+1}$ with $\frac{X}{2H}\leq 2^\ell\leq X^2$, the integral \eqref{after plancherel} is bounded by
\begin{align}\label{product of l-functions}
\ll H\bigg(\sup_{X/H\leq T\leq X^2}\frac{1}{T}\int_{|t|\leq T}|\zeta(1/2+it)B_h(k/2+kit)|^2 dt\bigg)+O(X^{-\frac14}).
\end{align}
\medskip
\noindent{\it Case $1$.}\label{case3} Assume that the Lindel\"{o}f hypothesis is true. Let $0\leq \alpha<\frac12$ and $\delta >0$ be sufficiently small. Using Lemma \ref{mean value theorem}, we get that \eqref{product of l-functions} is bounded above by 
\[
\ll  \Big(\frac{ H}{D^{k+2\alpha - 1}}+\frac{H}{TD^{k+2\alpha-2}}\Big)T^\delta D^{2\epsilon_1}.
\] 
As both $\delta$ and $\epsilon_1$ are sufficiently small, we have $T^\delta D^{2\epsilon_1} \ll H^{\epsilon /2k}$. Thus, under the ranges $X/H\leq T\leq X^2$, $D\in [z^{1/k}, (2X)^{1/k}]$ and $z\geq H^{1+\epsilon}$ with the hypothesis $H\leq X^{\frac{k}{k+1}-\epsilon}$, the expression in \eqref{product of l-functions} is dominated by
\[
 \Big(\frac{ H}{D^{k+2\alpha-1}}+\frac{H^2}{XD^{k+2\alpha-2}}\Big) H^{\frac{\epsilon}{2k}} \ll \Big(H^{\frac{1-2\alpha}{k}-\frac{2k+4\alpha-3}{2k}\epsilon}+H^{\frac{1-2\alpha}{k}- \frac{4k^2+4k\alpha-k+2}{2k^2}\epsilon}\Big)  \ll H^{\frac{1-2\alpha}{k}-\frac{2k+4\alpha-3}{2k}\epsilon}.
\]

Now, let us consider the case where $\frac12<\alpha< 1$. Similar to the above estimates, we can show that \eqref{product of l-functions} $\ll H^{-\epsilon/2}$, given that $z\geq H^{1+\epsilon}$ and $H\leq X^{\frac{k}{k-2\alpha+2}-\epsilon}$.

\medskip
\noindent{\it Case $2$.}
 We now consider the unconditional case.

\medskip
\noindent{\it Subcase $2.1$.}
Let $k\geq 2$ and $0\leq \alpha<\frac12$. Consider the set $S(D)$ defined as
\[
S(D)=\{t\in [-T, T]: \, |B_h(k/2+kit)|\leq f(D)\},
\] 
where $f(D)$ is a function of $D$ to be chosen later.
 Applying the Cauchy--Schwarz inequality and the fourth moment of $\zeta(s)$, the contribution of $S(D)$ to \eqref{product of l-functions} is bounded by 
\[
\ll \frac{H}{T}\bigg(\int_{|t|\leq T} |\zeta(1/2+it)|^4 dt\bigg)^{1/2}\bigg(\int_{S(D)}|B_h(k/2+kit)|^4 dt\bigg)^{1/2}\ll H {(\log{T})}^2 f(D)^2\ll H^{\frac{1-2\alpha}{k}-\epsilon}
\]
provided that $f(D)\ll H^{\frac{1-2\alpha-k}{2k}-\epsilon}$ and using trivial bound $S(D)\ll T$. Since $D\geq z^{1/k}\geq H^{1/k+\epsilon/k}$, we choose $f(D)=D^{\frac{1-2\alpha-k}{2}- \epsilon}$.

Using the definition of $B_h$ and the bound  $h(d)\ll_{\epsilon_1} d^{\epsilon_1-\alpha}$, there exists some $c_{\epsilon_1}>0$ such that $$|B_h(k/2+ikt)|\leq (c_{\epsilon_1}D)^{1-\alpha-\frac{k}{2}+\epsilon_1}.$$ 

Let $V\in [f(D), (c_{\epsilon_1}D)^{1-\alpha-\frac{k}{2}+\epsilon_1}]$ and consider the following set:
\[
S(D)^\perp= \{t\in [-T, T]:\, V\leq |B_h(k/2+kit)|< 2V\}.
\]
Thus, it suffices to show that for each $V\in [f(D), (c_{\epsilon_1}D)^{1-\alpha-\frac{k}{2}+\epsilon_1}]$ and $T\in [X/H, X^2]$,
\[
\frac{H}{T}V^2\int_{S(D)^\perp}|\zeta(1/2+it)|^2 dt \ll H^{\frac{1-2\alpha}{k}- \epsilon}
\]
holds.
Additionally, a straightforward estimation shows that $$\sum_{d\sim D}\frac{|h(d)|^2}{d^k}\ll \frac{1}{D^{k+2\alpha-1-2\epsilon_1}}.$$
Thus Lemma \ref{large value theorem} gives 
\[
|S(D)^\perp|\ll \left(\frac{1}{D^{k+2\alpha-2}}V^{-2}+T\min\Big\{\frac{1}{D^{k+2\alpha-1}}V^{-2}, \frac{1}{D^{3k+6\alpha-4}}V^{-6}\Big\}\right)D^{8\epsilon_1}.
\]

We begin by bounding the first term of $|S(D)^\perp|$. By using the sub-convexity bound $\zeta(1/2+it)\ll |t|^{13/84+\epsilon}$, established by J. Bourgain~\cite{JB}, we obtain that when $T\in [X/H, X^2]$ and $D\in [z^{1/k}, (2X)^{1/k}]$, 
\begin{align}\label{27/06/23}
\frac{H}{T}V^2\int_{S(D)^\perp}|\zeta(1/2+it)|^2 dt \ll \frac{HD^{8\epsilon_1}}{T^{29/42-\epsilon}D^{k+2\alpha-2}}.
\end{align}
Therefore, \eqref{27/06/23}$ \ll H^{\frac{1-2\alpha}{k}-\frac{\epsilon}{2}}$, since $H\leq X^{\frac{29k}{29k+42}-\epsilon}$ and $\epsilon_1$ is sufficiently small.

Now, we analyze the second term in the above bound of $|S(D)^\perp|$. Applying the Cauchy--Schwarz inequality and the fourth moment $\zeta$ (see \cite[eq. (7.6.1)]{Titch} ), we deduce that
\begin{align}\label{05/06/2026}
\frac{H}{T}V^2\int_{S(D)^\perp}|\zeta(1/2+it)|^2 dt&\ll \frac{H}{T}V^2 |S(D)^\perp|^{1/2}\bigg(\int_{|t|\leq T}|\zeta(1/2+it)|^4 dt \bigg)^{1/2}\nonumber\\
& \ll HV^2\min\Big\{\frac{V^{-2}}{D^{k+2\alpha-1}}, \frac{V^{-6}}{D^{3k+6\alpha-4}}\Big\}^{1/2}D^{4\epsilon_1} \nonumber\\
& \ll H \frac{V^{1/2}}{D^{(k+2\alpha-1)/4}} \frac{V^{-1/2}}{D^{(3k+6\alpha-4)/4}} D^{4\epsilon_1}.
\end{align}
Therefore, \eqref{05/06/2026} $\ll H^{\frac{1-2\alpha}{k}-\epsilon/2}$ provided that $z\geq H^{\frac{k+2\alpha-1}{k+2\alpha-5/4}+\epsilon}$. Note that $\mathcal{K}_{h, z}(X; H)$ is bounded above by the dyadic expression in \eqref{dyadic beyond z barrier} multiplied by $\log{H}$. Hence, we have $\mathcal{K}_{h,z}(X; H)\ll H^{\frac{1-2\alpha}{k}-\frac{\epsilon}{3}}$.

\medskip
\noindent{\it Subcase $2.2$.}
Assume that $k \geq 1$ and $\frac{1}{2}<\alpha<2$. As in Subcase $2.1$, the result follows by comparing the bounds obtained in \eqref{27/06/23} and \eqref{05/06/2026} with $O(H^{-\epsilon/3})$, provided that provided that $H\leq X^{\frac{29k}{29k+84(1-\alpha)}-\epsilon}$ and $z\geq H^{\frac{k}{k+2\alpha-5/4}+\epsilon}$.
\end{proof}

\section{Proof of Proposition~\ref{prop5} and Proposition~\ref{prop6}}\label{section 7}
This section presents the proofs of propositions concerning the discrete variance. 
\subsubsection*{Preparation of proofs} By expanding the square to $\Var_f^{\mathcal{D}}(X; H)$  and using \eqref{mean value estimate}, we obtain 
\begin{align}\label{T(H)}
\Var_f^{\mathcal{D}}(X; H)=S_1-\widetilde{c}_{h, k}^2H^2+O\left(\frac{H^2}{X}X^{\max \big\{\frac{1-\alpha}{k}+\epsilon_1, \, 0\big\}}\right),
\end{align}
where
\[
S_1:= \frac{1}{X}\sum_{j_1, j_2=1}^{H}\sum_{n\leq X}f(n+j_1)f(n+j_2).
\]

Let us denote 
\[
f_z(n):= \sum_{\substack{d^k\mid n\\ d\leq z}}h(d)\quad \text{ and } \quad f_z^c(n):= \sum_{\substack{d^k\mid n\\ d> z}}h(d)
\]
so that $f(n)=f_z(n)+f_z^c(n)$,
where $z$ is to be chosen later.
Using the Cauchy--Schwarz inequality,
\begin{align*}
\bigg|\sum_{n\leq X}f(n+j_1)f(n+j_2)-f_z(n+j_1)f_z(n+j_2)\bigg| \ll \bigg(\sum_{n\leq X}|f_z^c(n+j_1)|^2\bigg)^{\frac12}\bigg(\sum_{n\leq X}|f_z(n+j_2)|^2\bigg)^{\frac12}.
\end{align*}
Applying the Chinese remainder theorem and $h(d)\ll \frac{1}{d^{\alpha - \epsilon_1}}$, we obtain 
\begin{align*}
\sum_{n\leq X}|f_z^c(n+j_1)|^2=\sum_{n\leq X}\bigg(\sum_{\substack{d^k\mid n+j_1\\ d> z}}h(d)\bigg)^2\ll \frac{X+j_1}{z^{2(k+\alpha-\epsilon_1-1)}}.
\end{align*}
We see that
\[
\sum_{n\leq X}|f_z(n+j_2)|^2\ll X.
\]
By combining all the aforementioned information, we deduce that
\begin{align*}
S_1=\frac{1}{X}\sum_{j_1, j_2=1}^{H}\sum_{n\leq X}f_z(n+j_1)f_z(n+j_2)+O\left(H^2z^{-(k+\alpha-\epsilon_1-1)}\right).
\end{align*}
Furthermore, using the Chinese remainder theorem again, we obtain
\begin{align*}
\sum_{n\leq X}f_z(n+j_1)f_z(n+j_2)&=X\sum_{d_1, d_2\geq 1}\frac{h(d_1)h(d_2)}{[d_1, d_2]^k}\varrho(j_1, j_2, d_1, d_2)\\
&+  + O\bigg(z^{2(1-\alpha+\epsilon_1)} + X\sum_{d_1>z; d_2\geq 1}\frac{|h(d_1)h(d_2)|}{d_1^k d_2^k}\bigg),
\end{align*}
where $\varrho(j_1, j_2, d_1, d_2)$ represents the number of solutions to the congruence system $n\equiv -j_1 \pmod{d_1^k}$ and $n\equiv -j_2 \pmod{d_2^k}$.
The error term is bounded by 
$$\ll  z^{2(1-\alpha+\epsilon_1)}+Xz^{-(k+\alpha-\epsilon_1-1)}.$$

By choosing $z=X^{\frac{1}{k-\alpha+1+\epsilon_1}},$ 
\begin{equation}\label{S1}
S_1 =\sum_{j_1, j_2=1}^{H}A(j_1, j_2) + O\left(H^2X^{-\frac{k+\alpha-1-\epsilon_1}{k-\alpha+1+\epsilon_1}}\right),
\end{equation}
where 
\[
A(j_1, j_2):= \sum_{d_1, d_2\geq 1}\frac{h(d_1)h(d_2)}{[d_1, d_2]^k}\varrho(j_1, j_2, d_1, d_2).
\] 

In order to state our next lemma, we will use the following notations. 
\begin{align*}
Q_h(s)= \begin{cases}\prod_{p}\left(1+\frac{p^k h(p)^2}{p^s(p^k+2h(p))}\right) & \mbox{ if }  h\in \mathcal{M}_{\alpha, \beta},\\
\prod_{p}\left(1-\frac{h(p)^2}{p^s}\right)^{-1} &\mbox{ if }  h\in \mathcal{G}_{\alpha, \beta},
\end{cases}
\quad \text{ and } \quad
e_h=\begin{cases}
\prod_{p}\left(1+\frac{2h(p)}{p^k}\right)   &\mbox{ if }  h\in \mathcal{M}_{\alpha, \beta},\\
\prod_p\frac{p^k+h(p)}{p^k-h(p)}   &\mbox{ if }  h\in \mathcal{G}_{\alpha, \beta}.
\end{cases}
\end{align*}
 \begin{lemma}\label{lemH}
 Let $k\geq 1$, and $0<\gamma <1/2$ with $0\leq \alpha <2$. We have 
 \begin{align*}
 R(H):=\sum_{j_1=1}^{H}\sum_{j_2=1}^{H} A(j_1,\, j_2) = 
 \widetilde{c}_{h,k}^2 H^2 + 2e_hI_{h,\alpha, \gamma}(H) ,
 \end{align*}
 where 
 \begin{align}\label{I(H)}
 I_{h, \alpha, \gamma}(H) := \frac{1}{2\pi i}\int_{(-\gamma)}H^{s+1}\chi(s)\zeta(1-s)Q_h(k+ks)\frac{ds}{s(s+1)}.
 \end{align}
 \end{lemma}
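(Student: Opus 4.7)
The plan is to evaluate $R(H)$ directly by first simplifying the congruence count and then applying a Mellin--Barnes representation of the sawtooth-squared function. Since the two simultaneous congruences $n \equiv -j_1 \pmod{d_1^k}$ and $n \equiv -j_2 \pmod{d_2^k}$ are compatible (with a unique residue modulo $[d_1^k, d_2^k]$) precisely when $j_1 \equiv j_2 \pmod{(d_1, d_2)^k}$, one has $\varrho(j_1, j_2, d_1, d_2) = \mathbf{1}[j_1 \equiv j_2 \pmod{(d_1, d_2)^k}]$. Using the elementary identity
\[
\sum_{j_1, j_2 = 1}^{H} \mathbf{1}[j_1 \equiv j_2 \pmod m] = \frac{H^2}{m} + m\bigl(\{H/m\} - \{H/m\}^2\bigr),
\]
obtained by grouping pairs according to $j_2 - j_1 = rm$, together with $[d_1, d_2](d_1, d_2) = d_1 d_2$, one splits $R(H) = \widetilde{c}_{h,k}^2 H^2 + R_2(H)$, where the diagonal contribution collapses via $[d_1, d_2]^k (d_1, d_2)^k = d_1^k d_2^k$ to $H^2 \bigl(\sum_d h(d)/d^k\bigr)^2$, and
\[
R_2(H) := \sum_{d_1, d_2 \geq 1} \frac{h(d_1) h(d_2) (d_1, d_2)^{2k}}{(d_1 d_2)^k}\,\beta\!\left(\frac{H}{(d_1, d_2)^k}\right), \qquad \beta(x) := \{x\}(1 - \{x\}).
\]

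To extract the prefactor $e_h$ and the Dirichlet series $Q_h$, I would substitute $e = (d_1, d_2)$ and $d_i = e a_i$ with $(a_1, a_2) = 1$, pull $\beta(H/e^k)$ outside, and factor the inner coprime sum as an Euler product. Two cases appear. If $h \in \mathcal{M}_\alpha^\mu$, then $h(e a_i)$ vanishes unless $e$ is squarefree and $(e, a_i) = 1$, so the inner sum equals $h(e)^2 \prod_{p \nmid e}(1 + 2h(p)/p^k) = e_h\, h(e)^2 \prod_{p \mid e}(1 + 2h(p)/p^k)^{-1}$. If $h \in \mathcal{G}_\alpha$, complete multiplicativity gives $h(e a_i) = h(e) h(a_i)$ unconditionally, and a prime-by-prime calculation reduces the inner sum to $h(e)^2 \prod_p (p^k + h(p))/(p^k - h(p)) = e_h\, h(e)^2$. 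In either case, letting $b(e)$ denote the resulting outer weight, a direct Euler-factor check shows $\sum_e b(e)/e^s = Q_h(s)$, so that $R_2(H) = e_h \sum_e b(e)\, \beta(H/e^k)$.

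The analytic core is the Mellin--Barnes identity
\[
\beta(x) = \frac{1}{\pi i}\int_{(\gamma)} \zeta(s)\, x^{s+1}\, \frac{ds}{s(s+1)} \qquad (-1 < \gamma < 0),
\]
which I would establish by expanding $\zeta(s)$ termwise on $\Re(s) > 1$ to get $\frac{1}{2\pi i}\int_{(c)} \zeta(s) x^{s+1}/(s(s+1))\,ds = \sum_{n \leq x}(x - n) = x^2/2 - x/2 + \beta(x)/2$, and shifting the contour leftward past the simple poles at $s = 1$ and $s = 0$ with residues $x^2/2$ and $-x/2$, respectively. Invoking the functional equation $\zeta(s) = \chi(s)\zeta(1-s)$, substituting $x = H/e^k$, and interchanging the sum over $e$ with the integral then yields
\[
R_2(H) = \frac{2 e_h}{2\pi i} \int_{(\gamma)} H^{s+1}\chi(s) \zeta(1-s)\, Q_h(k + ks)\,\frac{ds}{s(s+1)} = 2 e_h\, I_{h, \alpha, \gamma}(H),
\]
using $\sum_e b(e)/e^{k(s+1)} = Q_h(k + ks)$.

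The principal obstacle is justifying the interchange of summation and integration on the line $\Re(s) = \gamma \in (-1/2, 0)$: in the worst case $(k, \alpha) = (2, 0)$, the Dirichlet series $\sum_e b(e)/e^{k(s+1)}$ has abscissa of absolute convergence exactly at $\Re(s) = -1/2$, which is precisely why the statement forces $\gamma > -1/2$. Away from this boundary the interchange is controlled by the bound $|h(d)| \ll d^{-\alpha + \varepsilon}$, the squarefree support of $h$ in the $\mathcal{M}_\alpha^\mu$ case, and the standing hypothesis $\alpha + k > 3/2$ built into the class $\mathcal{F}_{\alpha, \beta, k}$; together with the polynomial decay of $\chi(s)\zeta(1-s)/(s(s+1))$ on vertical lines, this makes the passage routine.
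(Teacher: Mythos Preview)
Your argument is correct and rests on the same analytic ingredient as the paper --- the Mellin--Perron identity $\frac{1}{2\pi i}\int_{(c)}\zeta(s)x^{s+1}\frac{ds}{s(s+1)}=\sum_{n\le x}(x-n)$ and the contour shift past $s=1,0$ --- but the combinatorial packaging is different. The paper first collapses $A(j_1,j_2)$ to $e_h f_h(|j_1-j_2|)$, rewrites $\sum_{j_1,j_2}f_h(|j_1-j_2|)$ as a Ces\`aro-type double sum $f_h(0)H+2\sum_{n\le H-1}\sum_{l\le n}f_h(l)$, opens $f_h$ via $\nu_h$, and applies the Mellin integral to the whole thing at once; the residues $\tfrac{H^2}{2}Q_h(2k)$ and $-\tfrac{H}{2}Q_h(k)$ then cancel against the diagonal term $e_h f_h(0)H$. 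You instead perform the $j_1,j_2$-sum first via the exact count $\sum_{j_1,j_2\le H}\mathbf{1}[m\mid j_1-j_2]=H^2/m+m\beta(H/m)$, which cleanly separates the $\widetilde{c}_{h,k}^2H^2$ piece before any analysis, and then apply the Mellin identity pointwise to $\beta(H/e^k)$. Your route is arguably more transparent since the main term is visible from the start and no cancellation between residues and the diagonal is needed; the paper's route has the minor advantage that the interchange of sum and integral is done on the line $\Re(s)=\sigma>1$ where everything converges absolutely and trivially, whereas you interchange on $\Re(s)=\gamma\in(-\tfrac12,0)$ and must (as you note) invoke the hypothesis $\alpha+k>3/2$ together with the decay of $\chi(s)\zeta(1-s)/s(s+1)$ there.
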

 \begin{proof}
 
Notice that 
\begin{small}
\begin{align*}
A(j_1, j_2)&=\sum_{\substack{d_1, d_2\geq 1\\ (d_1, d_2)^k\mid \, |j_1-j_2|}}\frac{h(d_1)h(d_2)}{[d_1, d_2]^k}=
\begin{cases}
e_h \displaystyle\sum_{d^k\mid \, |j_1-j_2|}\frac{h(d)^2}{d^k} \prod_{p|d}\left(1+\frac{2h(p)}{p^k}\right)^{-1}  & \mbox{ if }\, h\in \mathcal{M}_{\alpha, \beta}, \\
e_h\displaystyle\prod_{p^k\mid \, |j_1-j_2|}\left(1-\frac{h(p)^2}{p^k}\right)^{-1}  & \mbox{ if }\, h\in \mathcal{G}_{\alpha, \beta}.
\end{cases} 
\end{align*}
\end{small}
This implies  
\[
R(H)=e_h\sum_{j_1=1}^H\sum_{j_2=1}^{H}f_h(|j_1-j_2|), 
\]
where 
\[
f_h(r)=\begin{cases} \prod_{\substack{ p^k \mid  r}}\left(1+\frac{h(p)^2}{p^k+2h(p)}\right)  & \mbox{ if } h \in \mathcal{M}_{\alpha, \beta},\\
\prod_{p^k\mid r}\left(1-\frac{h(p)^2}{p^k}\right)^{-1} & \mbox{ if } h \in \mathcal{G}_{\alpha, \beta}.
\end{cases}
\]
By separating the diagonal and off-diagonal terms, we have
\[
R(H)=e_hf_h(0) H +2e_h\sum_{n=1}^H  (H-n)f_h(n)=e_hf_h(0) H +2e_h \sum_{n=1}^{H-1}\sum_{l=1}^n f_h(l).
\]
We can rewrite the above double sum as 
\[
S(H):=\sum_{n=1}^{H-1}\sum_{l=1}^n f_h(l)=\sum_{n=1}^{H-1}\sum_{l=1}^n \sum_{d^k \, \mid \, l}\frac{\nu_h(d)}{d^k}.
\]
Here $\nu_h$ is a multiplicative function defined as follows:
if $h \in \mathcal{G}_{\alpha, \beta}$, then $\nu_h(d) = h^2(d)$.
For $h \in \mathcal{M}_{\alpha, \beta}$, it is defined for prime powers as
\begin{align*}
\nu_h(p^t)=\left\{
 	\begin{array}
 	[c]{ll}
 	\frac{p^t h(p)^2}{p^t+2h(p)}  \,& \text{ if   } \,  t=1, 
 	\\
 	\hspace{2mm}
0 & \, \text{ if } \, t\geq 2.
 	\end{array}
 	\right.
\end{align*}
Then the Dirichlet series $\sum_{d=1}^{\infty} \frac{\nu_h(d)}{d^s}$ is essentially $Q_h(s)$. Following Hall [\cite{Hall}, Lemma $1$], we deduce that for $\sigma>1$,
\begin{equation}\label{S}
S(H)=\frac{1}{2\pi i}\int_{(\sigma)}H^{s+1}\zeta(s)Q_h(k+ks)\frac{ds}{s(s+1)}.
\end{equation}
 The integrand is absolutely convergent for all $\Re(s)>-\frac12$  except for simple poles at $s= 0$ and $s=1$. We move the line of integration to $\Re(s)=\gamma$ with $-\frac12<\gamma<0$. Using the Cauchy residue theorem and the functional equation of $\zeta$, we obtain
\begin{equation}\label{S(H)}
S(H)=\frac{H^2}{2}Q_h(2k)-\frac{H}{2}Q_h(k)+\frac{1}{2\pi i}\int_{(\gamma)}H^{s+1}\chi(s)\zeta(1-s)Q_h(k+ks)\frac{ds}{s(s+1)}.
\end{equation}
Since $Q_h(k)=f_h(0)$ and $e_hQ_h(2k)=\widetilde{c}_{h,k}^2$, the result follows.
 \end{proof}

\begin{lemma}\label{newlem3}
	Let $f\in \mathcal{F}_{\alpha, \beta, k}$ with $0 \leq \alpha <\frac{1}{2}$ and $0<\epsilon<\frac{3(1-2\alpha)}{k(3+k\beta^2)}$ be given. Recall the definition of $I_{h,\alpha, \gamma}(H)$ from Lemma~\ref{lemH}. Then we have 
	\begin{align*}
	I_{h,\alpha, \gamma}(H) = \mathrm{Res}_{s=\frac{1-2\alpha-k}{k}}H^{1+s}\chi(s) \zeta(1-s)Q_h(k+ks)\frac{1}{s(s+1)}+O\left(H^{\frac{1-2\alpha}{k}-\epsilon}\right).
	\end{align*}
\end{lemma}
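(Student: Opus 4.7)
The plan is to establish the asymptotic formula by a contour deformation argument. My first step is to use the functional equation $\chi(s)\zeta(1-s) = \zeta(s)$ to rewrite the integrand as $H^{s+1}\zeta(s)Q_h(k+ks)/(s(s+1))$. By examining the Euler product, $Q_h(k+ks)$ admits a factorization $Q_h(k+ks) = \zeta^{\beta^2}(k+ks+2\alpha)\cdot R(s)$, where $R$ is holomorphic and bounded in a small strip containing the vertical line through $s = \sigma^* := (1-2\alpha-k)/k$. The integrand therefore has a pole of order $\beta^2$ at $s = \sigma^*$, and under our hypotheses $0\leq \alpha < 1/2$ and $k\geq 2$, one has $-1 < \sigma^* \leq -1/2$, so $\sigma^*$ lies between $-1$ and the original contour.

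Next, I would shift the contour from $\Re(s) = \gamma$ to $\Re(s) = \gamma' := \sigma^*-\eta$ for a small $\eta > 0$ with $\gamma' > -1$, picking up exactly the residue at $s = \sigma^*$, which produces the claimed main term. To estimate the integral on the shifted contour I would truncate at height $|t|\leq T := H^A$ and bound three pieces: (i) the tail of the original integral for $|t| > T$ on $(\gamma)$, which is $O(H\,T^{-1/2+\epsilon_0})$ via $|\chi(s)|\ll |t|^{1/2-\gamma}$ and boundedness of $\zeta(1-s)$ and $Q_h(k+ks)$ there; (ii) the horizontal segments from $\gamma\pm iT$ to $\gamma'\pm iT$; and (iii) the vertical integral on $(\gamma')$. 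For (ii) and (iii), I would use the convexity estimate $\zeta^{\beta^2}(1-k\eta+it)\ll (1+|t|)^{\beta^2 k\eta/2+\epsilon}$ for $Q_h$, together with $|\chi(\sigma+it)|\ll (1+|t|)^{1/2-\sigma}$ and the harmless factor $|s(s+1)|^{-1}\ll (1+|t|)^{-2}$. A direct computation then gives the vertical bound $H^{(1-2\alpha)/k-\eta}\, T^{\max\{0,\, 1/2-(1-2\alpha)/k + \eta(1+k\beta^2/2)+\epsilon\}}$ and an analogous bound for the horizontal contribution.

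The final step is to choose $A$ and $\eta$ so that all three errors are dominated by $H^{(1-2\alpha)/k-\epsilon}$. The tail forces $A\geq 2(k-1+2\alpha)/k + O(\epsilon)$, while the vertical integral forces $\eta$ to be small (so that the exponent of $T$ remains $\leq 0$ or only slightly positive). Solving the resulting system shows that a valid $\eta$ exists precisely when $\epsilon < 3(1-2\alpha)/(k(3+k\beta^2))$, which is exactly the hypothesis on $\epsilon$. The main obstacle is this delicate balancing: the convexity growth of $\zeta^{\beta^2}$ near its pole at $\sigma^*$ competes with the polynomial decay coming from $|s(s+1)|^{-1}$, and the boundary case $(k,\alpha) = (2,0)$ (where $\sigma^* = -1/2$ lies exactly on the edge of the admissible strip $(-1/2,0)$ for $\gamma$) is the tightest, corresponding to the sharpness of the stated range of $\epsilon$.
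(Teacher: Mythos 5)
Your overall strategy --- factor $Q_h(k+ks) = \zeta^{\beta^2}(k+ks+2\alpha)R(s)$ with $R$ holomorphic near $\sigma^*=(1-2\alpha-k)/k$, identify the pole of order $\beta^2$, shift the contour past it, truncate at height $T=H^A$, and bound tail, horizontal, and vertical pieces --- is the same decomposition the paper uses, and your bookkeeping of the sizes of $\chi$, $\zeta(1-s)$, and $|s(s+1)|^{-1}$ on the shifted line is correct.

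The gap is in the tail estimate. Your bound $O(HT^{-1/2+\epsilon_0})$, obtained from $|\chi(\gamma+it)|\ll|t|^{1/2-\gamma}$ together with trivial bounds on the other factors, forces $T\gg H^{2(1-(1-2\alpha)/k)+O(\epsilon)}$, i.e.\ $A\geq 2\bigl(1-(1-2\alpha)/k\bigr)\geq 1$ for $k\geq 2$. Writing $a:=(1-2\alpha)/k$, your vertical bound $H^{a-\eta}T^{\max\{0,\,1/2-a+\eta(1+k\beta^2/2)+\epsilon\}}$ then requires
\[
-\eta+A\Bigl(\tfrac12-a+\eta\bigl(1+\tfrac{k\beta^2}{2}\bigr)+\epsilon\Bigr)\le -\epsilon,
\]
but $A(1/2-a)\geq 0$ and $A\eta(1+k\beta^2/2)\geq 2\eta$ (since $A\geq1$ and $1+k\beta^2/2\geq2$ for $k\geq 2$, $\beta^2\geq 1$), so the left-hand side is at least $\eta>0$ for every admissible $\eta>0$. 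Your system of constraints is infeasible, and the assertion that a valid $\eta$ exists precisely when $\epsilon<3(1-2\alpha)/(k(3+k\beta^2))$ does not follow from the bounds as you set them up.

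The paper closes this gap by estimating the tail $I_1$ with Hall's oscillatory-integral argument (cited to \cite{Hall}, pp.\ 11--12), not by the pointwise size of $|\chi(s)|$. Stirling's formula gives $H^{it}\chi(\sigma+it)$ a phase of roughly $e^{it\log(2\pi eH/t)}$, whose derivative $\log(2\pi H/t)$ is large and monotone for $t$ small compared with $H$, producing the much sharper bound $I_1\ll (H/T)^{(1-2\alpha)/k}$. This permits the tiny choice $T=H^{k\epsilon/(1-2\alpha)}$, after which $I_2$ and $I_3$ (which the paper bounds using the subconvexity estimate $\zeta(\sigma+it)\ll|t|^{(1-\sigma)/3}\log t$, slightly sharper than your convexity exponent $1/2$) are comfortably under control; the hypothesis $\epsilon<3(1-2\alpha)/(k(3+k\beta^2))$ emerges from that balancing. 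Without the cancellation in $I_1$ your value of $T$ is orders of magnitude too large and the proof cannot close.
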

\begin{proof}
	
	Let $\delta>0$ be optimized later. The function $Q_h(k+ks)$ exhibits a pole of order $\beta^2$ at $s=-1+\frac{1-2\alpha}{k}$. By relocating the line of integration to $\Re(s)=-\delta$, located to the left of the pole, we obtain
	\begin{align*}
	I_{h,\alpha, \gamma}(H) =\mathrm{Res}_{s=-1+\frac{1-2\alpha}{k}}H^{1+s}\chi(s) \zeta(1-s)Q_h(k+ks)\frac{1}{s(s+1)}+O\left(|I_1|+|I_2|+|I_3|\right),
	\end{align*}
	where
	\begin{small}
	\begin{align*}
	&I_1= \frac{1}{2\pi i}\int_{-\gamma+iT}^{-\gamma+i \infty}H^{s+1}\chi(s) \zeta(1-s)Q_h(k+ks)\frac{ds}{s(s+1)}, \\
	& I_2= \frac{1}{2\pi i}\int_{-\gamma+iT}^{-\delta+iT}H^{s+1}\chi(s) \zeta(1-s)Q_h(k+ks)\frac{ds}{s(s+1)},\\
	& I_3= \frac{1}{2\pi i}\int_{-\delta}^{-\delta+iT}H^{s+1}\chi(s) \zeta(1-s)Q_h(k+ks)\frac{ds}{s(s+1)}.
	\end{align*}
	\end{small}
	It is known that (\cite{GI}, page 193) for $\sigma\geq \frac{1}{2},\, t\geq 2$,
	\begin{align}\label{convexity bound}
	\zeta(\sigma+it)\ll (1+|t|)^{\frac{1-\sigma}{3}}\log t.
	\end{align}
Furthermore, for $-1 \leq \sigma \leq 2$ and $t \geq 2$, applying Stirling's formula to $\Gamma(s)$ yields
	\begin{small}
	\begin{align}\label{Bound for gamma factor}
	\chi(s)=\left(\frac{2\pi}{t}\right)^{\sigma+it -1/2}e^{i(t+\pi/ 4)}\left(1+O(1/t)\right).
	\end{align}
	\end{small}
	\subsubsection*{Evaluation of $I_2$ and $I_3$} 
	Applying \eqref{convexity bound} and \eqref{Bound for gamma factor}, we obtain
	\begin{small}
	\[
	I_2\ll \int_{-\delta}^{-\gamma}\frac{H^{1+\sigma}T^{1/2-\sigma}T^{\frac{1-2\alpha-k-k\sigma}{3}\beta^2}}{T^2}\log T \, d\sigma\ll HT^{-\frac{3}{2}+\frac{1-2\alpha-k}{3}\beta^2}\log T\max_{\gamma\leq \sigma\leq \delta}\left(\frac{H}{T^{1+\frac{k\beta^2}{3}}}\right)^{-\sigma}
	\] 
	\end{small}
	and \begin{small}
	\begin{align*}
	I_3\ll H^{1-\delta}+ H^{1-\delta}\int_{2}^{T}\frac{t^{1/2+\delta}t^{\frac{1-2\alpha-k+k\delta}{3}\beta^2}}{t^2}\log t \, dt \ll H^{1-\delta}+ H^{1-\delta}T^{-\frac{1}{2}+\delta +\frac{1-2\alpha-k+k\delta}{3}\beta^2}\log T.
	\end{align*}
	\end{small}
	\subsubsection*{Evaluation of $I_1$} Following Hall (\cite{Hall}, pages~11--12), we derive that for $T=H^{\frac{k}{1-2\alpha}\epsilon}$,
	\begin{small}
	\[
	I_1\ll \left(\frac{H}{T}\right)^{\frac{1-2\alpha}{k}}\ll H^{\frac{1-2\alpha}{k}-\epsilon},
	\]
	\end{small}
By choosing $T$ as above and $\gamma=1-\frac{1-2\alpha}{k}-\epsilon'$, where $\epsilon'>0$, we obtain $I_2\ll H^{\frac{1-2\alpha}{k}-\epsilon}$, under the condition that $\epsilon'<\frac{k\epsilon}{2\left(1-2\alpha-k\epsilon(1+k\beta^2/3)\right)}$ and $\epsilon<\frac{3(1-2\alpha)}{k(3+k\beta^2)}$.
Furthermore, if we choose $\delta=1-\frac{1-2\alpha}{k}+\epsilon'$, where $\epsilon'=\frac{k\epsilon}{2\left(1-2\alpha-k\epsilon(1+k\beta^2/3)\right)}$ and $\log{T}\ll H^{\frac{k^2\beta^2\epsilon\epsilon'}{1-2\alpha}}$, then we deduce that
	\[
I_3\ll H^{\frac{1-2\alpha}{k}-\epsilon'}+ H^{\frac{1-2\alpha}{k}-\epsilon'}H^{\frac{k}{1-2\alpha}\left(\frac{1}{2}-\frac{1-2\alpha}{k}+\epsilon' +\frac{k\beta^2}{3}\epsilon'\right)\epsilon}\ll H^{\frac{1-2\alpha}{k}-\epsilon}.
\]
	This completes the proof.
\end{proof}
 
 \begin{lemma}\label{connect between sum and infinite integral}
 Let $\frac{1}{2} < \alpha <2 $ and $c_{h,k}(H)$ be given in \eqref{C_h,k}.
 For any $H\geq 1, k\geq 1$ and $0<\gamma<1/2$, we have
 \begin{align*}
c_{h, k}(H)=2e_h I_{h,\alpha, \gamma}(H).
 \end{align*}
 \end{lemma}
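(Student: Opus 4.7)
The strategy is to evaluate the double sum $R(H)=\sum_{j_1,j_2=1}^{H} A(j_1,j_2)$ in closed form and compare with Lemma~\ref{lemH}, which asserts $R(H)=\widetilde{c}_{h,k}^2 H^2 + 2e_h I_{h,\alpha,\gamma}(H)$. Thus it suffices to establish the purely combinatorial identity $R(H)-\widetilde{c}_{h,k}^2 H^2 = c_{h,k}(H)$, after which no further contour manipulation is needed. This is attractive because it avoids shifting the line $\Re(s)=\gamma$ further left (where the integrand of $I_{h,\alpha,\gamma}$ acquires a pole of order $\beta^2$ at $s=-1+\tfrac{1-2\alpha}{k}$, which lies to the left of the original contour precisely in the regime $\alpha>\tfrac12$ treated here).

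Interchanging the order of summation in the definition of $R(H)$ gives
\begin{align*}
R(H)=\sum_{d_1,d_2\ge 1}\frac{h(d_1)h(d_2)}{[d_1,d_2]^k}\,N_H\bigl((d_1,d_2)^k\bigr),\qquad N_H(q):=\#\{(j_1,j_2)\in [1,H]^2:q\mid j_1-j_2\}.
\end{align*}
Writing $H=qM+r$ with $0\le r<q$, a direct count yields the exact identity $N_H(q)=H^2/q+q(\{H/q\}-\{H/q\}^2)$. The contribution from the $H^2/q$ term equals $H^2\bigl(\sum_d h(d)/d^k\bigr)^2=\widetilde{c}_{h,k}^2 H^2$, since $[d_1,d_2](d_1,d_2)=d_1d_2$. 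Parametrising $d_1=da$, $d_2=db$ with $d=(d_1,d_2)$, $(a,b)=1$, the remaining contribution becomes
\begin{align*}
\sum_{d\ge 1}\bigl(\{H/d^k\}-\{H/d^k\}^2\bigr)\sum_{(a,b)=1}\frac{h(da)h(db)}{(ab)^k}.
\end{align*}

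It remains to identify the inner sum, via its Euler product, with the prefactor in the definition \eqref{C_h,k} of $c_{h,k}(H)$. For $h\in\mathcal{G}_\alpha$, complete multiplicativity extracts $h(d)^2$ and the local factor at each prime is $1+2\sum_{e\ge 1}h(p)^e/p^{ek}=(p^k+h(p))/(p^k-h(p))$, giving $h(d)^2 e_h$. For $h\in\mathcal{M}_\alpha^\mu$, $h$ is supported on squarefree integers; a prime-by-prime analysis (the condition $(a,b)=1$ together with squarefreeness forces $(a,b)=(1,1)$ at primes dividing $d$, while at primes $p\nmid d$ only $(e_1,e_2)\in\{(0,0),(1,0),(0,1)\}$ contribute) yields $h(d)^2\prod_{p\nmid d}(1+2h(p)/p^k)$. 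Both expressions match the corresponding case of \eqref{C_h,k}, completing the identification. The main (mild) obstacle is this prime-by-prime bookkeeping, especially in the $\mathcal{M}_\alpha^\mu$-case where the restricted product $\prod_{p\nmid d}$ must be tracked carefully; absolute convergence of the outer sum over $d$ is automatic for $\alpha>\tfrac12$ because $h(d)^2\ll d^{-2\alpha+\epsilon}$ while $\{H/d^k\}-\{H/d^k\}^2\ll\min(1,H/d^k)$.
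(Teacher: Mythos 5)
Your proof is correct, and it takes a genuinely different route from the paper's. The paper starts from the definition of $c_{h,k}(H)$, substitutes the algebraic identity $\{x\}-\{x\}^2 = x - x^2 + 2\bigl(x[x]-\tfrac12[x]([x]+1)\bigr)$, replaces the last piece by Hall's contour representation $x[x]-\tfrac12[x]([x]+1)=\tfrac{1}{2\pi i}\int_{(\sigma)}x^{s+1}\zeta(s)\frac{ds}{s(s+1)}$, identifies the resulting Euler products with $e_h Q_h(\cdot)$, recognises the remaining integral as $2e_h S(H)$, and finally invokes the shifted-contour form \eqref{S(H)} of $S(H)$ so that the polynomial terms cancel. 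You instead treat Lemma~\ref{lemH} as a black box (which is safe, since it is proved earlier and your argument introduces no circularity) and reduce the lemma to the combinatorial identity $R(H)-\widetilde{c}_{h,k}^2 H^2 = c_{h,k}(H)$, which you verify by a direct lattice count $N_H(q)=H^2/q+q\bigl(\{H/q\}-\{H/q\}^2\bigr)$ followed by the $(d_1,d_2)=(da,db)$ reparametrisation and Euler-product bookkeeping. The lattice-count identity plays exactly the role the paper's identity for $\{x\}-\{x\}^2$ plays, but your version avoids any explicit contour shift within this lemma and isolates the combinatorics more cleanly; the paper's version is more self-contained since it never needs to return to $R(H)$. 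Your prime-by-prime analysis in both cases ($\mathcal{M}_\alpha^\mu$ and $\mathcal{G}_\alpha$) is correct: for $h\in\mathcal{M}_\alpha^\mu$ squarefreeness of $da$, $db$ forces $p\nmid a,b$ at primes $p\mid d$ and gives local factor $1+2h(p)/p^k$ elsewhere; for $h\in\mathcal{G}_\alpha$ the geometric series gives local factor $(p^k+h(p))/(p^k-h(p))$, which is $e_h$. The convergence remark at the end is sound. (Incidentally, the sign in the paper's displayed expansion of $c_{h,k}(H)$ in its own proof, $\prod_{p\nmid d}(1-\tfrac{2h(p)}{p^k})$, is a typo for $(1+\tfrac{2h(p)}{p^k})$, consistent with \eqref{C_h,k} and with your computation.)
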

 
 \begin{proof}
 Assume that $h\in \mathcal{M}_{\alpha, \beta}$. 
We write
 \[
 \left\lbrace\frac{H}{d^k}\right\rbrace- \left\lbrace\frac{H}{d^k}\right\rbrace^2=\frac{H}{d^k}-\left(\frac{H}{d^k}\right)^2+2\left(\frac{H}{d^k}\left[\frac{H}{d^k}\right]-\frac12 \left[\frac{H}{d^k}\right]\left(\left[\frac{H}{d^k}\right]+1\right)\right).
 \]
 Note that for $x>0$ and $\sigma>1$, one gets the following identity (see \cite[Page 10]{Hall}):
 \[ x[x]-\frac{1}{2}[x]([x]+1)= \frac{1}{2\pi i}\int_{(\sigma)}x^{s+1}\zeta(s)\frac{ds}{s(s+1)}.
 \]
  Consequently, we derive 
  \begin{small}
 \begin{align*}
c_{h, k}(H) &=  \sum_{d=1}^{\infty}h^2(d)\prod_{p\nmid d}\left(1 - \frac{2h(p)}{p^k}\right)\left( \frac{H}{d^k}-\left(\frac{H}{d^k}\right)^2 + \frac{1}{\pi i}\int_{(\sigma)}(H/d^k)^{s+1}\zeta(s)\frac{ds}{s(s+1)}\right)\\
&=He_hQ_h(k)- H^{2}e_hQ_h(2k)+
\frac{e_h}{\pi i}\int_{(\sigma)}H^{s+1}Q_h(k+ks)\zeta(s)\frac{ds}{s(s+1)}.
 \end{align*}
 \end{small}
 The integral on the right-hand side is $2 e_h S(H)$, with $S(H)$ defined in \eqref{S}. Thus using \eqref{S(H)},
we obtain
\[
c_{h, k}(H)=  \frac{1}{\pi i}\int_{(-\gamma)}H^{s+1}Q_h(k+ks)\chi(s)\zeta(1-s)\frac{ds}{s(s+1)}.
\]
We can repeat this argument for the remaining classes, thereby concluding the proof.
 \end{proof}

\subsubsection*{Proof of Proposition \ref{prop6}}
By incorporating Lemma \ref{lemH} and Lemma \ref{connect between sum and infinite integral} into equation \eqref{S1} and combining the result with \eqref{T(H)}, we obtain 
 \begin{align*}
\Var_f^{\mathcal{D}}(X; H)&=S_1-\widetilde{c}_{h, k}^2H^2+O\left(\frac{H^2}{X}X^{\max \big\{\frac{1-\alpha}{k}+\epsilon_1, \, 0\big\}}\right)\\
&=c_{h, k}(H)+O\left({H^2}{X^{-\frac{k+\alpha-1-\epsilon_1}{k-\alpha+1- \epsilon_1 }}}+{H^2}X^{\max \big\{\frac{1-\alpha}{k}+\epsilon_1,\,  0\big\} -1}\right).
\end{align*}
Since $H\leq X^{\frac{k+\alpha-1}{2(k-\alpha+1)}-\epsilon}$, the above error-term is $O(H^{-\epsilon})$.

 \subsubsection*{Proof of Proposition \ref{prop5}}
Following a similar approach as in the proof of Proposition~\ref{prop6}, we substitute Lemmas~\ref{lemH} and \ref{newlem3} into \eqref{S1}, and then apply \eqref{T(H)} to obtain the desired result.

\section{Proof of main theorems and corollaries}\label{section 4}
This section begins by proving our main theorems using propositions from Section \ref{section 3}. 
\begin{proof}[Proof of Theorem \ref{main theorem for simultaneouly k-free intergers}]
	Let us first consider the case when $H > X^{\epsilon}$.
	For any $f \in \mathcal{F}_{\alpha, \beta, k}$, 
	\begin{align}\label{correlation wrt z barrier}
		\sum_{x<n\leq x+H}f(n)=\sum_{x<ad^k\leq x+H}h(d)
		=\sum_{\substack{x<ad^k\leq x+H\\ d^k\leq z}}h(d)+ \sum_{\substack{x<ad^k\leq x+H\\ d^k> z}}h(d),
	\end{align}
	where $z$ will be chosen later.
	
	Denote the integrals $\mathcal{J}_{h,z}(X; H)$ and $\mathcal{K}_{h,z}(X; H)$ by $I_{k1}$ and $I_{k2}$. By applying the Cauchy-Schwarz inequality and  \eqref{correlation wrt z barrier} we have 
	\begin{align*}
		I_k:=\frac{1}{X}\int_X^{2X}\bigg|\sum_{x<n\leq x+H}f(n)- H \sum_{d^k\leq 2X}\frac{h(d)}{d^k}\bigg|^2 dx= I_{k1}+O\left(I_{k2}+\sqrt{I_{k1} I_{k2}}\right).
	\end{align*}
	
	\subsubsection*{The class $\mathcal{F}_{\alpha, \beta, 2}$}
	For $X^{\epsilon}<H\leq X^{\frac{2(3+8\alpha)}{11(1+2\alpha)}-\epsilon}$, we choose $z$ as follows: $$z=\min\Big\{X^{\frac{1}{1-\alpha}}H^{-\frac{1+2\alpha}{2-2\alpha}-\epsilon}, H^{\frac{1-2\alpha}{2-2\alpha}- \epsilon}X^{\frac{1}{2-2\alpha}}\Big\}.$$
	The range of $H$ ensures that $z \geq H^{\frac{4(1+2\alpha)}{3+8\alpha}+\epsilon}$.
	By using Proposition \ref{below z-barrier} and Proposition \ref{19.04.22.6:26}, 
	\begin{align*}
		I_{2}=
		c_{h, 2}H^{\frac{1-2\alpha}{2}}P_{\beta^2-1}(\log H)
		+O\left(H^{\frac{1-2\alpha}{2}-\frac{\epsilon}{8}}\right).	
	\end{align*}

	\subsubsection*{For the class $\mathcal{F}_{\alpha, \beta, k}$ with $k\geq 3$.}
	Recall $\nu$ and $e(k, \alpha)$ from \eqref{10/5/14:24} and \eqref{10/5/14:18} respectively. Assume that $X^{\epsilon}<H\leq X^{e(k, \alpha)-3\epsilon}$.
	Let's consider the expression \[z=\min \Big\{X^{\frac{2}{\nu(2-\alpha)}}H^{\frac{1-2\alpha}{\nu(2-\alpha)}-\epsilon}, X^{\frac{\nu}{2(\nu-\alpha)}}H^{\frac{1-2\alpha}{2(\nu-\alpha)}-\epsilon}, X^{\frac{2(k-\nu)}{(4-\nu)k-2\nu}-\epsilon}\Big\}.\] Therefore, we get that 
	$z\geq H^{\frac{k+2\alpha-1}{k+2\alpha-5/4}+\epsilon}$. Hence, whenever $1 < \nu \leq 2$, we have
	\begin{align*}
		H\leq X^{\min\big\{f_1(\nu), \,f_2(\nu),\, f_3(\nu)\big\}},
	\end{align*}
	where
	\begin{align*}
		&f_1(\nu)=\frac{2(k+2\alpha-5/4)}{\nu (2-\alpha)(k+2\alpha-1)-(1-2\alpha)(k+2\alpha-5/4)}, \\ &f_2(\nu)= \frac{\nu(k+2\alpha-5/4)}{2(\nu-\alpha)(k+2\alpha-1)-(1-2\alpha)(k+2\alpha-5/4)},\\ 
		&f_3(\nu)=\frac{2(k-\nu )(k+2\alpha-5/4)}{(4k-(k+2)\nu )(k+2\alpha-1)}.
	\end{align*}
	It can be verified that $f_1(\nu) > f_2(\nu)$ for $1.34 < \nu \leq 2$. Thus, it is sufficient to consider the minimum of $f_2(\nu)$ and $f_3(\nu)$. The intersection point of $f_2(\nu)$ and $f_3(\nu)$ is given by \eqref{10/5/14:24}.
	
	Using Proposition \ref{below z-barrier} and Proposition \ref{19.04.22.6:26}, and rescaling $3\epsilon$ to $\epsilon$, we obtain
	\begin{align*}
		I_k=
		c_{h, k}H^{\frac{1-2\alpha}{k}}P_{\beta^2-1}(\log H)
		+O\left(H^{\frac{1-2\alpha}{k}-\frac{\epsilon}{3(k+1)}}\right).
	\end{align*}
	Next, we estimate the tail part of $\Var_f(X; H)$.
	We have 
	\begin{align*}
		\sum_{d^k\geq 2X}\frac{h(d)}{d^k}\ll X^{-\frac{k+\alpha-1-\epsilon_1}{k}}.
	\end{align*}
	Using this estimate, we obtain
	\begin{align}\label{E(k)}
		E_k:= \frac{1}{X}\int_{X}^{2X}\bigg|H \sum_{d^k\geq X}\frac{h(d)}{d^k}\bigg|^2 dx\ll H^2 X^{-\frac{2(k+\alpha-1-\epsilon_1)}{k}}.
	\end{align}
	By applying the Cauchy-Schwarz inequality, we obtain
	\[
	\Var_f(X; H)=I_k + O\left(E_k+ \sqrt{I_k\cdot E_k}\right),
	\]
	The estimates for $I_k$ and $E_k$ lead to 
	\[
	\sqrt{I_k\cdot E_k}\ll H X^{-\frac{k+\alpha-1-\epsilon_1}{k}}H^{\frac{1-2\alpha}{2k}}(\log H)^{\beta^2-1}\ll H^{\frac{1-2\alpha}{k}-\epsilon}.
	\]
	This inequality holds when $H \leq X^{\frac{2(k+\alpha-1)}{2k+2\alpha-1}-\epsilon}$. By substituting the above estimates into $\Var_f(X; H)$, we complete the proof, assuming that $H > X^{\epsilon}$.

	Now consider the case when $2 \leq H \leq X^{\epsilon}$. In this case, we note that the difference between continuous and discrete variance is small, of size $O(H/X)$. Therefore, Theorem \ref{main theorem for simultaneouly k-free intergers} follows from Proposition~\ref{prop5}. This completes the proof. 
\end{proof}

\begin{proof}[Proof of Theorem \ref{main theorem for lower growth rate}]
	Assuming $H > X^{\epsilon}$, let $f \in \mathcal{F}_{\alpha, \beta, k}$ with $1/2 < \alpha < 2$. We will follow the proof of the Theorem \ref{main theorem for simultaneouly k-free intergers}. We consider two cases:
	
	\noindent
	\textbf{Case $1$:} Assume that $k=1$. We begin with the case $\alpha \in \left(\frac{1}{2},1\right)$; the remaining range of $\alpha$ can be treated similarly. According to the notation in the proof of Theorem~\ref{main theorem for simultaneouly k-free intergers}, and using Proposition~\ref{proposition for lower growth family}, we obtain that if
	$
	z \leq X^{\frac{1}{2(2-\alpha)}} H^{\frac{1}{2(2-\alpha)}-\epsilon},
	$
	then
	\[
	I_{11}=c_{h, 1}(H)+O(H^{-\epsilon_{\alpha, 1}}).
	\]
	On the other hand, Proposition~\ref{19.04.22.6:26} yields that for $X^{\epsilon}\leq H\leq X^{\frac{29}{113-84 \alpha}-\epsilon}$, and $z\geq H^{\max\big\{1, \frac{4}{8\alpha-1}\big\}+\epsilon}$, we get
	\[
	I_{12}\ll H^{-\epsilon/3}.
	\]
	By comparing the admissible ranges for $z$, we conclude that whenever
	\[
	X^{\epsilon}\leq H\leq X^{\min\big\{\frac{29}{113-84\alpha}, \frac{8\alpha - 1}{17-16\alpha}\big\}-\epsilon},
	\]
	we obtain
	\[
	I_1=c_{h, 1}(H)+O(H^{-\epsilon_{\alpha, 1}}).
	\]
The computation of the tail part proceeds exactly as in the proof of Theorem \ref{main theorem for simultaneouly k-free intergers}, and hence
\[
\Var_f(X; H)
=
I_1 + O\!\left(E_1 + \sqrt{I_1 \cdot E_1}\right),
\]
where from \eqref{E(k)}, we have $E_1 \ll H^2 X^{-2(\alpha-\epsilon_1)}$.
Therefore,
\[
\Var_f(X; H)
=
c_{h,1}(H) + O\!\left(H^{-\epsilon_{\alpha,1}}\right),
\]
provided that
\[
X^{\epsilon} \leq H \leq X^{\min\left\{\alpha, \frac{29}{113-84\alpha}, \frac{8\alpha - 1}{17-16\alpha}\right\}-\epsilon}\implies X^{\epsilon} \leq H \leq X^{\min\left\{\frac{29}{113-84\alpha}, \frac{8\alpha - 1}{17-16\alpha}\right\}-\epsilon}.
\]

	\noindent
	\textbf{Case $2$:} Assume that $k\neq 1$. The proof for the rest of the values of $k$ follow the same arguments as in Case $1$ and hence omit the details. 
	
	\noindent
	For the range $2\leq H\leq X^{\epsilon}$, Theorem~\ref{main theorem for lower growth rate} follows directly from Proposition~\ref{prop6}. 
\end{proof}

\begin{proof}[Proof of Theorem \ref{main theorem complex case}] 
	
	Consider the case where $H> X^{\epsilon}$. It is important to note that Lemma \ref{newlem1} and Lemma \ref{E(H)} remain valid for $\beta\in \mathbb{C}$. By combining these Lemmas with Lemma \ref{complex case main lemma}, Proposition \ref{19.04.22.6:26}, and following the proof of Theorem \ref{main theorem for simultaneouly k-free intergers}, we conclude the proof. We treat the case of complex \( \beta \) separately from the integer case, as the associated error term is significantly weaker. Moreover, the Lemma~\ref{complex case main lemma} plays a crucial role in handling complex \( \beta \) by employing a method of Selberg--Delange.

	Next, suppose that $2\leq H\leq X^{\epsilon}$. For $\beta\in \mathbb{C}$, we can derive an analogue of Proposition \ref{prop5} with a weaker error term, which matches our asymptotic formula, by modifying its proof. Indeed, we can easily adapt the proof of Proposition \ref{prop5} to obtain the integral \eqref{I(H)}, where complex $\beta$ is not effective. To compute \eqref{I(H)}, we follow the method presented in the proof of Lemma \ref{complex case main lemma} and complete the proof.
\end{proof}

\begin{proof}[Proof of Corollaries \ref{k-free case}, \ref{main theorem of euler toitient} and \ref{divisor functions}]
	Corollary \ref{k-free case} is a direct application of Theorem \ref{main theorem for simultaneouly k-free intergers} with growth factors $\alpha=0$, $\beta=1$, and a given order $k$. Additionally, Corollary \ref{main theorem of euler toitient} and \ref{divisor functions} are consequences of Theorem \ref{main theorem for lower growth rate} with $\alpha=\beta=k=1$, and $\alpha$ varying while $\beta=k=1$, respectively.
\end{proof}
\begin{proof}[Proof of Theorem \ref{lower bound for euler phi}]
	We have
	\[
	c_{\zeta}(H)>\sum_{d=1}^{H}\frac{\mu^2(d)}{d^2}\left( \left\{ \frac{H}{d} \right\} - \left\{\frac{H}{d} \right\} ^2\right)+O(H^{-1})
	\]
	Since $H$ runs over the primes, we have $\left\{ \frac{H}{d} \right\}=\frac{a}{d}$ for some $(a, d)=1$ with $1\leq a < d$. By analyzing the function $f(x)=x-x^2$ for $x\in(0, 1)$, we determine that the minimum value of $f$ is attained at $\frac{1}{d}$ and $1-\frac{1}{d}$ for rational numbers of the form $\frac{a}{d}$. Therefore, we have
	\begin{align*}
		c_{\zeta}(H)& >\sum_{d=1}^{H}\frac{\mu^2(d)}{d^2}\left(\frac{1}{d}-\frac{1}{d^2}\right)+O(1/H)\\
		& = \frac{\zeta(3)}{\zeta(6)}-\frac{\zeta(4)}{\zeta(8)}+O(1/H)=0.1036\ldots +O(1/H).
	\end{align*}
\end{proof}

\section{Appendix}\label{appendix}
As a consequence of our main results, we introduce a few additional corollaries that have been extensively studied in the existing literature.
\subsection{Euler totient function on Selberg class}
In \cite{JK}, Kaczorowski considered a general polynomial Euler product $F(s)$ of degree $d\geq 1$,
which belongs to a specific subclass of the Selberg class of $L$-functions. This particular class is defined by
\[
F(s)=\prod_p F_p(s)=\prod_p \prod_{j=1}^{d}\left(1-\frac{\alpha_j(p)}{p^s}\right)^{-1}, \quad \mbox{ for }\Re(s)>1,
\]
where $|\alpha_j(p)|\leq 1$ for all $p$ and $1\leq j\leq d$.
Additionally, the associated Euler totient function $\phi(n, F)$ is defined as
\[
\phi(n, F)=n\prod_{p\mid n}F_p(1)^{-1}=n\sum_{m\mid n}\frac{\mu(m)r(m)}{m},
\]
where 
$
r(m)=\prod_{p\mid m}p\left(1-F_p(1)^{-1}\right).
$
Through his work, Kaczorowski established that the mean value of $\phi(n, F)$ is equal to $C(F)x+O\big((\log 2x)^d\big),$
where $
C(F)=\prod_{p}\left(1-\frac{1-F_p(1)^{-1}}{p}\right)$.
Moreover, by considering the usual zero-free region for $F$, the aforementioned result is extended on average, together with its continuous variance:
\begin{small}
	\begin{align*}
	\frac{1}{X}\int_{X}^{2X}\bigg| \sum_{n\leq x}\frac{\phi(n, F)}{n}-2C(F)\, x\bigg|^2dx=\frac{7}{6\pi^2}\prod_p \left(1+\frac{|r(p)-1|^2}{p^2-1}\right)+O\left( \exp\left(-c\sqrt{\log X}\right)\right),
	\end{align*}
\end{small}
It is important to note that the function $r$ is multiplicative and satisfies $r(m)\ll m^{\epsilon}$ for any sufficiently small $\epsilon>0$. Consequently, as a direct consequence of Theorem \ref{main theorem for lower growth rate}, the following variance for $\frac{\phi(n, F)}{n}$ is obtained.
\begin{corollary}\label{1.6}
	Let $\epsilon>0$ be given. Then for $2\leq H\leq X^{1-\epsilon}$, we have 
	\begin{small}
		\begin{equation*}
		\Var_{\frac{\phi(n, F)}{n}}(X; H) = \sum_{d=1}^{\infty}\frac{\mu^2(d)r^2(d)}{d^2}\prod_{p\nmid d}\left(1 + \frac{2(1-F_p(1)^{-1})}{p}\right)\left( \left\{ \frac{H}{d} \right\} - \left\{\frac{H}{d} \right\} ^2\right) +O(H^{-\epsilon}).
		\end{equation*}
	\end{small}
\end{corollary}
\subsubsection{Euler function twisted by Dirichlet character}

Let $F(s)=L(s,\psi)$, where $\psi$ is a real non-principal Dirichlet character modulo $q$. Consider the function $\phi(n, \psi)$ given by
\[
\phi(n, \psi):=n\prod_{p\mid n}\left(1-\frac{\psi(p)}{p}\right).
\]
The Omega results for $\sum_{n\leq x}\phi(n, \psi)-\frac{x^2}{2L(2,\psi)}$ were investigated by Kaczorowski and Wiertelak \cite{JKW}, and Kaczorowski \cite{JK} further examined the mean square for the same expression. Moreover, applying Corollary \ref{1.6}, we can also derive the variance for $\frac{\phi(n, \psi)}{n}$ in short intervals.

\subsection{M\"{o}bius function of order $k$}
For $r\geq 1$ and $k\geq 2$, the M\"{o}bius function of order $k$ was introduced by T. M. Apostol \cite{Apostol}. It is defined by 
\begin{align*}
\widehat{\mu}_k(n)=
\begin{cases}
1 & \mbox{ if } \, n=1,\\
0 & \mbox{ if } \, p^{k+1}\mid n \, \text{ for some prime } p,\\
(-1)^r & \mbox{ if }\, n=p_1^k \ldots p_r^k\prod_{i>r}p_i^{a_i},\, 0\leq a_i<k,\\
1 & \mbox{ otherwise}.
\end{cases}
\end{align*}
When specializing to the case $k=1$, we recover the classical M\"{o}bius function. To study the average behavior of $\widehat{\mu}_k(n)$, it suffices to consider the following mean value:
\[
F_r(x):=\sum_{n\leq x}\widehat{\mu}_{k-1}(n)\widehat{\mu}_{k-1}(r^{k-1}n)=\mu(r)\sum_{\substack{n\leq x\\ (n, r)=1}}\mu_{k}(n).
\]
Note that, 
\[
\sum_{n\leq x}\widehat{\mu}_k(n)=\sum_{r\leq x^{1/k}}F_r(x/r^k).
\]
The subsequent corollary directly follows from Theorem \ref{main theorem for simultaneouly k-free intergers}. 
\begin{corollary}
	Let $\epsilon, e(k, 0)$ and $g(k)$ be as in Theorem \ref{main theorem for simultaneouly k-free intergers}. Let $H\leq X^{e(k, 0)-\epsilon}$. Then \begin{align*}
	\frac{1}{X}\int_X^{2X}\bigg|F_r(x+H)-F_r(x)- \frac{\phi(r) r^{k-1} \mu(r)}{\zeta(k)J_k(r)}H\bigg|^2 dx =  c_{ k, r}H^{\frac{1}{k}} +O\left(H^{\frac{1}{k}- \frac{\epsilon}{3(k+1)}}\right),
	\end{align*}
	where
	\begin{equation*}
	c_{k, r}=  -\frac{\mu^2(r)}{2\pi^2}\chi{\Big(\frac{k+1}{k}\Big)}\zeta{\Big(2-\frac{1}{k}\Big)}\prod_{p}\bigg( 1-\frac{1}{p^2} - \frac{2(p-1)}{p^{k+1}}\bigg) \prod_{p\mid r}\left(1-\frac{2}{p^k}+\frac{1}{p}\right)^{-1}.
	\end{equation*}
\end{corollary}

Furthermore, we can use Theorem \ref{main theorem for simultaneouly k-free intergers} to derive the variance of an arithmetic function associated with counting prime factors. This serves as an example in which we use $\alpha=0$ and $\beta=2$.
\begin{corollary}{\label{modified w(n)}}
	Let $\epsilon, e(k, 0)$ and $g(k)$ be as in Theorem \ref{main theorem for simultaneouly k-free intergers}. For $2\leq H\leq X^{e(k, 0)-\epsilon}$, 
	\begin{align*}
	\frac{1}{X}\int_X^{2X}\bigg|\sum_{x<n\leq x+H}(-1)^{\#\{p:\, p^k\mid n\}}- \prod_{p}\left(1-\frac{2}{p^k}\right)H\bigg|^2 dx =  c_{ k}H^{\frac{1}{k}}P_3(\log H) + O\left(H^{\frac{1}{k}- \frac{\epsilon}{3(k+1)}}\right),
	\end{align*}
	where 
	\begin{small}
		\begin{equation*}
		c_k=  -\frac{1}{24\pi^2}\chi{\Big(\frac{k+1}{k}\Big)}\zeta{\Big(2-\frac{1}{k}\Big)}\prod_{p} \left(1-\frac{4}{p^k}+\frac{4}{p}\right)\left(1-\frac{1}{p}\right)^4.
		\end{equation*}
	\end{small}
	
\end{corollary}

\subsection{Extension to subfamily $\{f(n)\}_{(n, q)=1}$ for a given integer $q$}
Building upon the proof of Theorems \ref{main theorem for simultaneouly k-free intergers} and \ref{main theorem for lower growth rate}, we can generalize our result to a subfamily $\{f(n)\}_{(n, q)=1}$ for any fixed integer $q\geq 1$. This extension involves examining the mean square of 
\begin{small}
	\[
	\sum_{\substack{x<n\leq x+H\\ (n, q)=1}}f(n)- \frac{\phi(q)}{q}\bigg(\sum_{(d, q)=1}\frac{h(d)}{d^k}\bigg) H.
	\] 
\end{small}
For $\alpha\in [0,\frac12)$, the constant $c_{h, k}$ (see \eqref{constant 2}) undergoes the following modifications:
\begin{small}
	\begin{align*}
	B(s)=\prod_{p\mid q} \left( 1+\frac{2h(p)}{p^{k}} + \frac{h(p)^2}{p^{k+ ks}} \right)^{-1}\prod_{p} \left( 1+\frac{2h(p)}{p^{k}} + \frac{h(p)^2}{p^{k+ ks}} \right)\left(1-\frac{1}{p^{k+ks+2\alpha}}\right)^{\beta^2}
	\end{align*}
\end{small}
and 
\begin{small}
	\begin{align*}
	D(s)&=\prod_{p}\left(1-\frac{h(p)^2}{p^{k+ks}}\right)^{-1}\left(1-\frac{1}{p^{k+ks+2\alpha}}\right)^{\beta^2}
	\left(\frac{p^k+h(p)}{p^{k}-h(p)}\right) \prod_{p\mid q} \left(1-\frac{h^2(p)}{p^{k+ks}}\right)\left(\frac{p^k+h(p)}{p^{k}-h(p)}\right).
	\end{align*}
\end{small}
For $\alpha\in (1/2, 2)$, when $h\in \mathcal{M}_\alpha^{\mu}$, the constant $c_{h,k}(H)$ (which depends on $H$) transforms into the following constant that is also based on $q$.
\begin{small}
	\[
	c_{h,k}(H)=\sum_{\substack{d=1\\ (d, q)=1}}^{\infty}h^2(d)\prod_{p\nmid dq}\left(1 + \frac{2h(p)}{p^k}\right)\sum_{t\mid q}\mu^2(t)\prod_{\substack{p\mid q\\ p\nmid t}}\left(1-\frac{2}{p}\right)\left( \left\{\frac{H}{td^k} \right\} - \left\{\frac{H}{td^k} \right\} ^2\right).
	\]
\end{small}
A similar modification applies to $h\in \mathcal{G}_{\alpha, \beta}$.
\subsubsection{Schemmel’s totient in short intervals}\label{sch totient func}
Schemmel introduced the Schemmel's totient function $S_m(n)$ in \cite{SCH} to count the number of sets of $m$ consecutive integers, each less than $n$, that are relatively prime to $n$. The function $S_m(n)$ has a product representation as
\begin{align*}
S_m(n)=
\begin{cases}
0 & \mbox{ if } \, P^{-}(n)\leq m,\\
n\prod_{p\mid n}\left(1-\frac{m}{p}\right)=n\sum_{\substack{d\mid n}}\frac{\mu(d)m^{w(d)}}{d} & \mbox{ if } \, P^{-}(n)>m.
\end{cases}
\end{align*}
The subsequent corollary arises as a consequence of the previously discussed subfamily with $\alpha=1$.
\begin{corollary}[Sieving by the primes $\leq m$]
	Let $\epsilon>0$ be given. Assuming that $P(m)=\prod_{p\leq m}p$, for $H\leq X^{1-\epsilon}$, the following holds: 
	\begin{equation*}
	\frac{1}{X}\int_{X}^{2X}\bigg|\sum_{\substack{x<n \leq x+H\\ \left(n, P(m)\right)=1}}\frac{S_m(n)}{n}-\prod_{p\leq m}\left(1-\frac{1}{p}\right)\prod_{p\nmid P(m)}\left(1-\frac{m}{p^2}\right)\,H\bigg|^2dx = c_m(H) +O(H^{-\epsilon}),
	\end{equation*}  
	where
	\begin{small}
		\[
		c_m(H)=\sum_{\substack{d=1\\ (d, P(m))=1}}^{\infty}\frac{\mu^2(d)m^{2\omega(d)}}{d^2}\prod_{p\nmid d}\left(1 - \frac{2m}{p^2}\right)\sum_{t\mid P(m)}\prod_{\substack{p\mid P(m)\\ p\nmid t}}\left(1-\frac{2}{p}\right)\left( \left\{ \frac{H}{td} \right\} - \left\{\frac{H}{td} \right\} ^2\right).
		\]
	\end{small}
\end{corollary}
The Euler totient function has several generalizations, such as the Jordan totient function, Klee's totient function, and others, which can be found in [Chapter 3, \cite{Sandors}] and \cite{Sandors 2}. Similarly, we can determine the variance of these functions using Theorem \ref{main theorem for lower growth rate}.
\subsection{Figures related to exponents in our main results} 

We conclude the paper to presenting figures related to exponents in our main results.

\begin{figure}[hbt!]
	\includegraphics[scale=0.42]{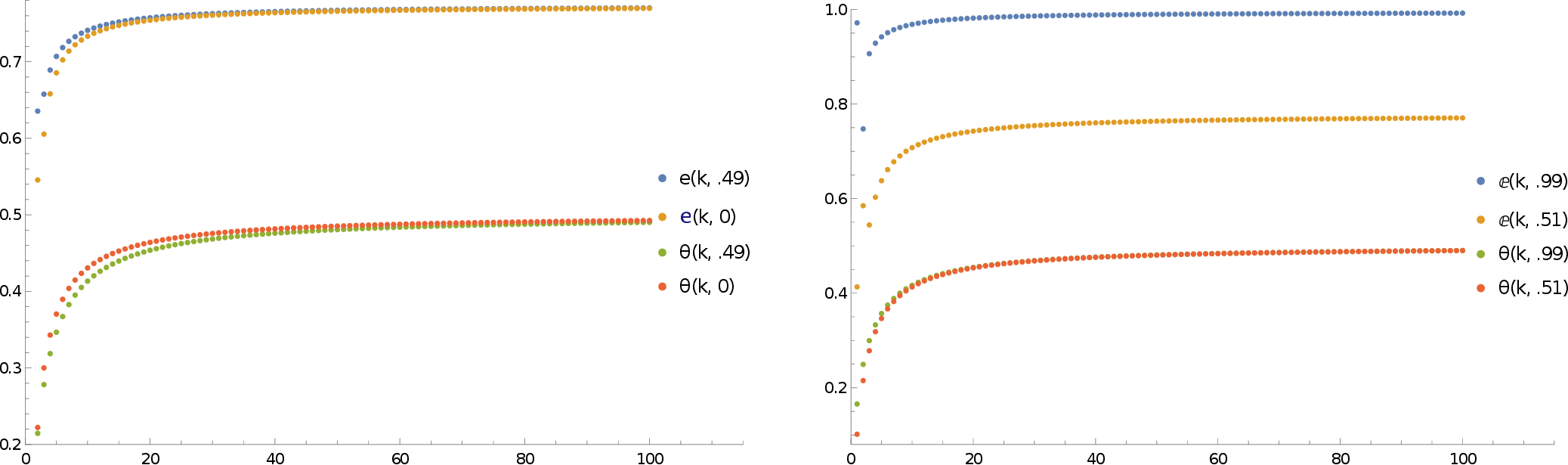}
	\caption{Comparison between exponents of continuous and discrete variance}\label{continuous and discrete exponents} 
\end{figure}

\begin{figure}[hbt!]
	\includegraphics[scale=0.50]{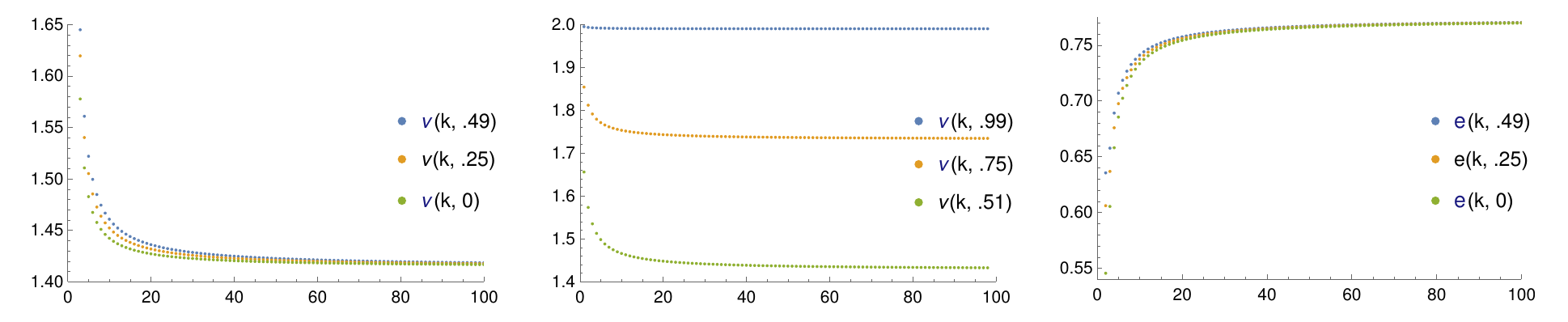}
	\caption{ Plot $\nu(k,\alpha)$ and $e(k,\alpha)$ for a few fixed $\alpha$ and $k$ vary }\label{figure 1}
	\includegraphics[scale=0.44]{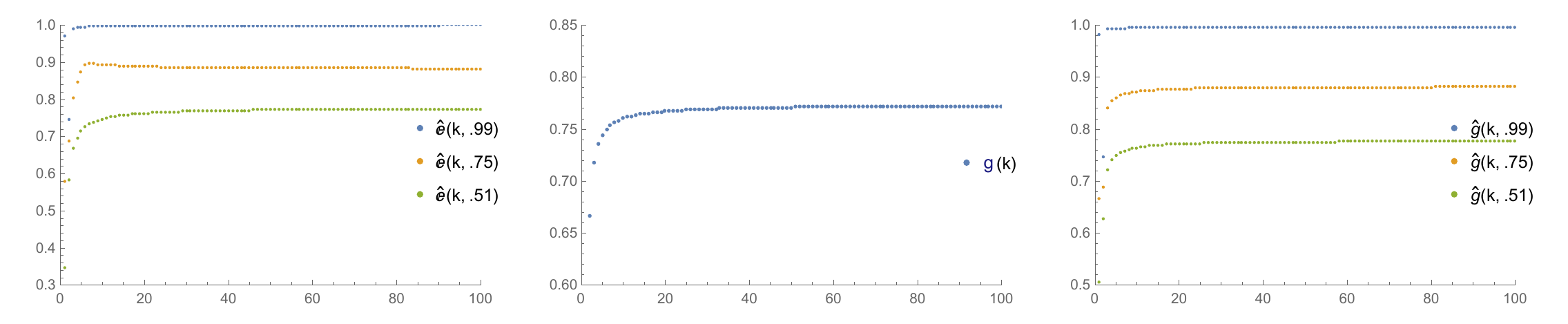}	
	\caption{ Plot $\widehat{e}(k, \alpha)$, $g(k)$ and $\widehat{g}(k, \alpha)$ for a few fixed $\alpha$ and $k$ vary }\label{figure 2}
\end{figure}

\textbf{Data Availability Statement:} All data generated during this study are included in this article. We have no conflicts of interest to disclose.

\end{document}